\documentclass[12pt,reqno]{amsart}

%\documentclass[12pt,fleqn]{article}
%\documentclass[12pt,leqno]{amsart}

%Uniqueness results for nonlocal Hamilton-Jacobi 
%equations with a mean curvature term

%Uniqueness results for nonlocal eikonal 
%equations with a mean curvature term

\setlength{\columnseprule}{0.4pt}
\setlength{\topmargin}{-1.5pt}
\setlength{\oddsidemargin}{.5cm}
\setlength{\evensidemargin}{.5cm}
\setlength{\textheight}{23.0cm}
\setlength{\textwidth}{15.0cm}

\usepackage{amsfonts,amsmath,amsthm}
\usepackage{amssymb,epsfig}
\usepackage{cases}
\usepackage[usenames]{color}
\usepackage{enumerate} 
%we can exchange the notation of number of ``enumerate".

\usepackage{color} %color
\definecolor{vert}{rgb}{0,0.6,0}

\theoremstyle{plain}
\newtheorem{thm}{Theorem}

\newtheorem{defn}{Definition}

\newtheorem{ex}{Example}
\newtheorem{lem}[thm]{Lemma}
\newtheorem{cor}[thm]{Corollary}
\newtheorem{prop}[thm]{Proposition}
\theoremstyle{remark}
\newtheorem{rem}{\bf{Remark}}

%\renewcommand{\thefootnote}{\fnsymbol{footnote}}

%decoration for characters

\newcommand{\N}{\mathbb{N}}
\newcommand{\R}{\mathbb{R}}
\newcommand{\bS}{\mathbb{S}}

\newcommand{\cA}{\mathcal{A}}
\newcommand{\cB}{\mathcal{B}}
\newcommand{\cH}{\mathcal{H}}
\newcommand{\cL}{\mathcal{L}}

\newcommand{\cS}{\mathcal{S}}

%functional space

\newcommand{\Li}{L^{\infty}}
\newcommand{\W}{W^{1,\infty}}

%domain
\newcommand{\Q}{\mathbb{R}^N\times(0,T)}
\newcommand{\cQ}{\mathbb{R}^N\times[0,T]}

%Greek alphabet
\newcommand{\al}{\alpha}
\newcommand{\gam}{\gamma}
\newcommand{\del}{\delta}
\newcommand{\ep}{\varepsilon}
\newcommand{\kap}{\kappa}
\newcommand{\lam}{\lambda}
\newcommand{\sig}{\sigma}
\newcommand{\om}{\omega}

\newcommand{\Del}{\Delta}
\newcommand{\Gam}{\Gamma}
\newcommand{\Lam}{\Lambda}
\newcommand{\Om}{\Omega}

%minute symbols
\newcommand{\ol}{\overline}
\newcommand{\ul}{\underline}
\newcommand{\pl}{\partial}
\newcommand{\supp}{{\rm supp}\,}

\newcommand{\Div}{{\rm div}\,}
\newcommand{\tr}{{\rm tr}\,}
\newcommand{\Per}{{\rm Per}\,}

%%%%%%%%%%%%%%%%%%%%%%%%%%%%%%%%%%%%%%%%%%%%%%%%%%%%%%%%%%%%%%%%%%%%%%%%%%%%%%%%%%%%%%%%%%%%%%%%%%%%%%%%%%%%%%%%%%%%%%%%%%%%%%%%%%%%%%%%%%%%%%%%%%

\begin{document}
\title[Short Time Uniqueness]{
Short Time Uniqueness Results for Solutions of \\
Nonlocal and Non-monotone Geometric Equations
}

\author[G. BARLES, O. LEY and H. MITAKE]
{Guy BARLES, Olivier LEY and Hiroyoshi MITAKE}

\address[G. Barles]
{Laboratoire de Math\'ematiques et Physique 
Th\'eorique, F\'ed\'eration Denis Poisson, 
Universit\'e de Tours, 
Place de Grandmont, 
37200 Tours, FRANCE}
\email{barles@lmpt.univ-tours.fr}
\urladdr{http://www.lmpt.univ-tours.fr/~barles}

\address[O. Ley]
{IRMAR, INSA de Rennes, F-35043 Rennes, France} 
\email{olivier.ley@insa-rennes.fr}
\urladdr{http://www.lmpt.univ-tours.fr/~ley}

\address[H. Mitake]
{Department of Applied Mathematics,
Graduate School of Engineering
Hiroshima University
Higashi-Hiroshima 739-8527, Japan}
\email{mitake@amath.hiroshima-u.ac.jp}
%{Department of Pure and Applied Mathematics, Waseda University, 
%Ohkubo, Shinjuku-ku, Tokyo 169-8555, JAPAN}
%\email{hiroyoshi.mitake@gmail.com}

\keywords{Nonlocal Hamilton-Jacobi Equations, Nonlocal Front Propagation, 
Short Time Uniqueness, Non-Fattening Condition, Lower Gradient Estimate, 
Dislocation Dynamics, Fitzhugh-Nagumo System, Viscosity Solution}
\subjclass[2010]{
35K15, %Initial value problems for second-order parabolic equations
34A12, %Initial value problems, existence, uniqueness, continuous dependence
       %and continuation of solutions
35A02, %Uniqueness problems: global uniqueness, local uniqueness, nonuniqueness
49L25 %Viscosity solutions, 
45K05, %Integro-partial differential equations  
53C44 %Geometric evolution equations (mean curvature flow, Ricci flow, etc.)
}

\thanks{This work was partially supported by the ANR (Agence Nationale de la Recherche) through MICA project (ANR-06-BLAN-0082) and 
by the Research Fellowship (20-5332, 22-1725)
for Young Researcher from JSPS and 
Excellent Young Researchers Overseas Visit Program of JSPS}

\date{\today}

\begin{abstract}
We describe a method to show 
short time uniqueness results 
for viscosity solutions of general nonlocal 
and non-monotone second-order geometric equations 
arising in front propagation problems. 
Our method is based on some lower gradient bounds
for the solution. These estimates are crucial to
obtain regularity properties of the front,
which allow to deal with nonlocal terms in the equations.
Applications to 
short time  uniqueness results 
for the initial value problems for 
dislocation type equations, 
asymptotic equations of a FitzHugh-Nagumo type system 
and equations depending on the Lebesgue measure of the fronts
are presented.
\end{abstract}

\maketitle

%%%%%%%%%%%%%%%%%%%%%%%%%%%%%%%%%%%%%%%%%%%%%%%%%%%%%%%%%%%%%%%%%%%%%%%%%%%%%%%%%%%%%%%%%%%%%%%%%%%%%%%%%%%%%%%%%%%%%%%%%%%%%%%%%%%%%%%%%%%%%%%%%%

\section{Introduction}

We are concerned with 
the evolution of compact hypersurfaces 
$\{\Gam_{t}\}_{t\ge0}\subset\R^{N}$ 
moving according to the general non-local law of propagation  
\begin{equation}\label{general law}
V=h(x,t,\Om_{t},n(x),Dn(x)) 
\quad\textrm{on} \ \Gam_{t}, 
\end{equation}
where $V$ is the normal velocity of $\Gam_{t}$ which depends, through the evolution law
$h$, on time, on the position of $x \in \Gam_{t}$, on the set $\Om_{t}$ enclosed by $\Gam_{t}$,
on the unit normal $n(x)$ to $\Gam_{t}$ at $x$
pointing outward to $\Om_{t}$ and on its gradient $Dn(x)$ which carries the curvature dependence of the velocity.

When such motion is local, i.e., when $h$ does not depend on $\Om_{t}$, and satisfies \textit{the inclusion principle} or \textit{geometrical monotonicity}, 
i.e., when, at least formally, the inclusion $\Om_{0}^{1}\subset \Om_{0}^{2}$ at time $t=0$ implies
$\Om_{t}^{1}\subset \Om_{t}^{2}$ for any $t>0$, it is proved by Souganidis and the first author \cite{BS} that
the motion can be defined and studied by the  \textit{level set approach}, which was introduced by
Osher and Sethian \cite{OS} for numerical calculations and then
developed, from a theoretical point of view, by Evans and Spruck \cite{ES} for the mean curvature
motion and by Chen, Giga and Goto \cite{CGG} for general velocities. 
This approach replaces the geometrical problem~\eqref{general law}
with a degenerate parabolic partial differential equation called the \textit{geometric}
or \textit{level set equation.} This equation is designed
to describe the desired evolution via the 0-level set of its
solution. More precisely, the existence and
uniqueness of the level set solution $u : \R^N \times [0,+\infty)\to\R$
allows to define  $\Gam_{t}$
as being the set $\{x\in\R^N \mid u(\cdot,t) = 0\}$.

In recent years, there has been much interest on the study of 
front propagations problems in cases when the normal velocity 
of the front depends on a non-local way of the enclosed 
region like \eqref{general law}.
This interest was motivated by several types of applications 
like dislocations' theory or 
FitzHugh-Nagumo type systems or volume dependent velocities 
that we describe below. It is worth pointing out that the level set approach still applies for motions with nonlocal velocities provided that the inclusion principle holds, following the ideas of Slepcev \cite{Sl}. But, in many of the above mentioned applications, one faces non-monotone surface evolution equations. For such class of problems, the level set approach cannot be used directly since the classical comparison arguments of viscosity solutions' theory fail and therefore, the existence and uniqueness of viscosity solutions to these equations become an issue.

Though the existence properties for such motions 
seem now to be well understood (see \cite{GGI, SS, BCLM3}), 
this is not the case for uniqueness.  
In particular, there are not many uniqueness results for 
curvature dependent velocities. 
As far as the authors know, there are only two works 
by Forcadel \cite{F} and Forcadel and Monteillet \cite{FM} 
which investigate the motion arising  in a model for dislocation dynamics 
which is included by our general equations. 
The aim of this article is to consider cases where we have, at the same time, a non-local velocity which induces a non-monotone evolution together with a curvature dependence (we explain later on the state of the art for such problems and why the curvature dependence creates a specific difficulty). More specifically, we describe a method to show short time uniqueness results 
for the general motion \eqref{general law}.

We now describe some typical applications we have in mind. 
We first consider a model 
for dislocation dynamics 
\begin{equation}\label{dislocation model}
h=M(n(x))
\bigl(
c_{0}(\cdot,t)\ast\mathbf{1}_{\ol{\Om}_{t}}(x)+c_{1}(x,t)
-\Div_{\Gam_{t}}\xi(n(x))
\bigr), 
\end{equation}
where $\mathbf{1}_A$ denotes the indicator function of 
a subset $A$ of $\R^N$ and $M, \xi:\bS^{N-1}\to\R$, 
$c_{0}, c_{1}:\cQ\to\R$ are given functions 
and 
we write 
\[
c_{0}(\cdot,t)\ast\mathbf{1}_{\ol{\Om}_{t}}(x)
:=\int_{\R^N}c_{0}(x-y,t)\mathbf{1}_{\ol{\Om}_{t}}(y)\,dy.  
\]
Here, $\bS^{N-1}$ denotes the $(N-1)$-dimensional unit sphere.
The term $\Div_{\Gam_{t}}\xi(n(x))
:=\tr\bigl((I-n(x)\otimes n(x))D_{x}\xi(n(x))\bigr)$ 
is called the \textit{anisotropic} (or \textit{weighted}) 
\textit{mean curvature} of $\Gam_{t}$ at $x$ 
(in the direction of $n(x)$). 
See for instance Giga \cite{G}. 
Typically, the reasonable assumptions in this context 
are the following: 
$M$ is a positive and bounded function, 
$c_{0}, c_{1}$ are bounded, continuous functions 
which are Lipschitz continuous in $x$ variable 
(uniformly with respect to $t$ variable), 
$c_{0},D_{x}c_{0}\in\Li([0,T];L^{1}(\R^N))$ 
and $\xi$ is a positively homogeneous function with degree $0$. 
The surface evolution equation \eqref{dislocation model} 
without the last term in the right hand side 
is well-known as typical models of the dislocation dynamics 
(see \cite{RBF, AHBM} for a derivation and 
the physical background).

We next consider 
asymptotic equations of a FitzHugh-Nagumo type system 
as an example of 
interface dynamics coupled with a diffusion equations, 
\begin{gather}
h=
\al(v(x,t))-\Div_{\Gam_{t}}(n(x))\nonumber\\
\textrm{and}\label{FN model}\\
v_{t}-\Del v
=g^{+}(v)\mathbf{1}_{\ol{\Om}_{t}}+
g^{-}(v)(1-\mathbf{1}_{\ol{\Om}_{t}}), \nonumber 
\end{gather}
where 
$\al, g^{\pm}:\R\to\R$ are bounded and Lipschitz continuous 
with $g^{-}\le g^{+}$.  
This system has been investigated by 
Giga, Goto and Ishii \cite{GGI} and 
Soravia and Souganidis \cite{SS}.

Finally, 
we consider equations depending on the measure of the fronts like 
\begin{equation}\label{measure model}
h=
\beta(\cL^{N}(\ol{\Om}_{t}))
-\Div_{\Gam_{t}}(n(x)), 
\end{equation}
where the function $\beta:\R\to\R$ is 
Lipschitz continuous. 
A typical example is 
$\beta(r)=a+br$ for some $a,b\in\R$ 
which has been investigated by Chen, Hilhorst and Logak in 
\cite{CHL} (see also \cite{C, CP}).

As we already mentioned it above, 
these examples are not only nonlocal but also 
non-monotone surface evolution equations. 
%By non-monotone, we mean that 
%front propagations do not satisfy the 
%\textit{inclusion principle} or 
%\textit{geometrical monotonicity}, 
%i.e.,, 
%if the initial front satisfies 
%$\Om_{0}^{1}\subset \Om_{0}^{2}$, 
%then this inclusion is preserved along the time: 
%$\Om_{t}^{1}\subset \Om_{t}^{2}$ 
%for all $t\in[0,T]$. 
Indeed, in \eqref{dislocation model}, 
the kernel $c_{0}$ may change sign and, 
in \eqref{FN model} and in \eqref{measure model},  
the functions $\al$, $\beta$ may be non-monotone.
We also refer to \cite{C, Sl, DKS, Sr} for some 
monotone non-local geometric equations.  
By using the framework which we present in this paper, 
we give short time uniqueness results 
for \eqref{dislocation model}, \eqref{FN model} and 
\eqref{measure model}.

%Through the \textit{level set approach} (see \cite{CGG,ES,G}), 
%the evolution of compact hypersurfaces 
%\eqref{general law} can be 
%described by 
%\[
%u_t+
%H[\mathbf{1}_{\{u\ge0\}}](x,t,Du,D^{2}u)=0 
%\textrm{in} \ \Q, 
%\]
%where $T>0$, $u:\Q\to\R$ is the unknown function, 
%$u_t$, $Du$ and $D^{2}u$ stand respectively for its time 
%and space derivatives, 
%and Hessian matrix with respect to $x$ variable, 
%$\mathbf{1}_A$ is the indicator function of a set $A$. 

There are many results of 
existence and uniqueness for 
the simplest case of motions of \eqref{dislocation model}, 
i.e.,  $M(p)\equiv 1$ and 
$\xi(p)=p/|p|$ without a curvature term. 
A short time existence and uniqueness result was first 
obtained in \cite{AHBM}. 
But then most of the results were obtained for curvature-independent
velocities ($\xi =0$):
long time existence and uniqueness results
were obtained when the velocity is positive, i.e.,
\begin{equation}\label{vitpos}
h>0 
\quad\textrm{on} \ \Gam_{t}, 
\end{equation}
by Alvarez, Cardaliaguet and Monneau 
in \cite{ACM} 
and by the first two authors in \cite{BL} 
by different methods. 
The first two authors with Cardaliaguet and Monneau 
in \cite{BCLM1} presented 
a new notion of weak solutions 
(see Definition \ref{def weak sol}) of 
the level set equation for \eqref{dislocation model} 
without a curvature term, 
gave the global existence of these weak solutions 
and analysed the uniqueness of them when~\eqref{vitpos} holds. 
A similar concept of solutions already appeared in 
\cite{GGI, SS}. 
In the companion paper \cite{BCLM2}, 
the first two authors with Cardaliaguet and Monteillet 
proposed a new perimeter estimate for the evolving 
fronts with uniform interior cone property and 
by using this, 
they extended the uniqueness result for 
dislocation dynamics equations and 
provided the uniqueness result for 
asymptotic equations of a FitzHugh-Nagumo type system,
still under the positiveness assumption~\eqref{vitpos}.  

In this paper, 
we do not use the perimeter estimate in an essential way 
but either elementary measure estimates or, 
in the most sophisticated cases, the interior cone property 
(see Lemma \ref{lem-for-unique}). 
Since the studies by 
\cite{ACM, BL, BCLM1, BCLM2}, 
it is now well-known that 
estimates on \textit{lower gradient bound} 
and perimeter of $0$-level sets of viscosity solutions 
of associated local equations are 
key properties to obtain existence and uniqueness results 
for nonlocal equations derived from \eqref{dislocation model},  
\eqref{FN model} and \eqref{measure model}. 
Let us describe the main difficulty of 
our problem and, 
to do so, we consider the level set equations of 
the simplest case of \eqref{dislocation model} 
or \eqref{FN model} here. 
Considering the non-local part as a given function, 
we are led to the study 
the (local) initial value problem 
\begin{equation}\label{local}
\left\{
\begin{aligned}
&
u_t=
\Bigl(c(x,t)
+\Div\bigl(\frac{Du}{|Du|}\bigr)\Bigr)|Du|
&& \textrm{in} \ \Q, \\
&u(\cdot,0)=u_0
&& \textrm{in} \ \R^{N}, 
\end{aligned}
\right.
\end{equation}
where 
$u_{0}\in\W(\R^N)$ and $c\in C(\cQ)$ are bounded 
and Lipschitz continuous 
with respect to the $x$ variable. 
One of our main results is 
a short time lower gradient bound estimate 
for the viscosity solution of \eqref{local}, i.e.,  
\begin{equation}\label{lower}
|Du(x,t)| \ge \eta(t) > 0 
\quad\textrm{in a neighborhood of} \ \{u(\cdot,t)=0\}. 
\end{equation}
For first-order eikonal equations, 
lower gradient bound comes naturally from the Barron-Jensen's 
approach (see \cite{L}). 
For second-order equations like \eqref{local}, 
it is affected by the ``\textit{diffusion}" term and 
the \textit{non-empty interior} difficulty 
and therefore we cannot expect that  
the property \eqref{lower} holds generally and for long-time. 
Indeed, in~\cite{BP}, see also~\cite{koo99, gk01}, they consider the simple
example of~\eqref{local} with $c\equiv 1$ and 
smooth $u_0$ such that $Du_0\not= 0$ on the initial
front $\{u_0=0\}.$ They prove that, up to choose suitable $u_0$, 
fattening may occur for arbitrary $t>0,$ i.e., the front may develop
an interior.
It is precisely this reason which implies that
there are not many results on 
the nonlocal second-order equations  
like the level sets equations of 
\eqref{dislocation model},  
\eqref{FN model} and \eqref{measure model}
and a short-time result is optimal.

Existence results were obtained 
by \cite{GGI, SS, BCLM3} 
but they concern merely existence of weak solutions 
defined by Definition \ref{def weak sol}. 
As stated above, there are two works \cite{F,FM} which 
give uniqueness results for the motion \eqref{dislocation model}. 
The difference between our results and theirs is that, 
in \cite{F}, only the evolution of hypersurfaces which can be expressed 
by graphs of functions is considered while, in \cite{FM}, the arguments 
are based on minimizing movement for \eqref{dislocation model} and 
they are completely different from our arguments 
which are based on the theory of viscosity solutions. 
Moreover, for the existence of minimizing movement for 
the simplest case of \eqref{dislocation model}, 
the assumptions that $c_{0}(\cdot,t)$ is symmetry and 
$c_{0}, c_{1}$ are smooth enough 
are essentially used. 
Therefore, uniqueness results for the examples \eqref{FN model} 
and \eqref{measure model} are not 
covered by \cite{FM}. 

Another difference with existing results in the literature
is that $h$ is allowed to change sign 
in~\eqref{general law}, contrary to~\cite{ACM, BL, BCLM1, BCLM2}
where~\eqref{vitpos} is one of the main assumption to get uniqueness.
It may give rise of fattening, see~\cite[Proposition 4.4]{BSS},
and it is another explanation of the short time result.

Finally, we explain the key idea to obtain~\eqref{lower} 
for viscosity solutions of~\eqref{local}. 
In order to get it, we make the following assumption 
on $u_{0}$. 
There exist constants $\lam_0\in(0,1)$, 
$\eta_0>0$ and $\nu\in C(\R^{N},\R^{N})$ such that 
\[
u_0(x+\lam\nu(x))\ge u_0(x)+\lam\eta_{0}
 \ 
\textrm{in a neighborhood of} \  \{u_{0}(\cdot)=0\} \ 
\]
for all 
$\lam\in[0,\lam_0]$. 
Then we prove that such a property 
is preserved for the solution of 
\eqref{local}, at least 
for short time, i.e., 
\begin{equation}\label{preserving}
u(x+\lam\nu(x), t)\ge 
u(x,t)+\lam\eta(t) 
\ \textrm{in a neighborhood of} \  \{u(\cdot,t)=0\}  
\end{equation}
for all $\lam\in[0, \ol{\lam}]$, 
$t\in[0,\ol{t}\land T]$ and 
some $\ol{t}>0$, $\ol{\lam}\in(0, \lam_{0}]$, 
where $\eta:[0,\ol{t}\land T]\to[0,\infty)$ 
is a non-increasing continuous function such that 
\begin{equation}\label{lower lem}
\eta(t)>0 \ \textrm{for all} \ t\in[0,\ol{t}\land T). 
\end{equation}
The assumption on $u_0$ is 
inspired by \cite[Theorem 4.3]{BSS}, 
where it is formulated only for the sign-distance 
function. A similar result to~\eqref{preserving} may be found
in~\cite{bcl08} where it is used to prove uniqueness results for the
mean curvature motion for entire graphs.

If $u_0$ is a smooth function with $Du_0 \neq 0$ on the compact hypersurface $\Gamma_0=\{x:\, u_0(x)=0\}$, then the assumption is satisfied with $\nu(x)=Du_0 (x)$ and if there exists a smooth solution of the level set equation, then \eqref{preserving} holds for short time. But, on one hand, the general degenerate parabolic and nonlinear equations we consider do not have classical solutions in general, and on the other hand, the above assumption on $u_0$ is valid in cases when $\Gamma_0$ is not a smooth hypersurface, which is also an important point here.

The proof of~\eqref{preserving} uses in a crucial way 
the geometric property of \eqref{local} 
and a \textit{continuous dependence result} 
for parabolic problems (which is, 
by the way, of independent interest). 
We refer to \cite{JK1, JK2, BJ} and references therein 
for the detail of the continuous dependence result 
for elliptic and parabolic problems.

We derive lower gradient estimate \eqref{lower} from 
\eqref{preserving} formally here. 
%(It is also easy to check it in the viscosity sense.) 
We have 
\begin{align*}
\lam\eta(t_{0})
\le \ &
u(x_{0}+\lam \nu(x_0),t_{0})-u(x_{0},t_{0})\\
{}= \ &
\lam \langle Du(x_{0},t_{0}),\nu(x)\rangle
+o(\lam\|\nu\|_{\infty})\\
{}\le \ &
\lam|Du(x_{0},t_{0})|\|\nu\|_{\infty}+o(\lam\|\nu\|_{\infty}) 
\quad\textrm{in a neighborhood of} \ 
\{u(\cdot,t)=0\} 
\end{align*}
for all $t\in[0,\ol{t}\land T]$ 
with $o(r)/r\to0$ as $r\to0$. 
Dividing $\lam$ in the above and 
taking a sufficiently small $\lam\in(0,\ol{\lam}]$,  
we get the lower estimate \eqref{lower}. 
We also obtain the interior cone property of 
fronts by \eqref{preserving}.

\medskip

The paper is organized as follows: 
in Section 2, 
we state a continuous dependence result for 
a class of equations which encompasses
level set equations associated 
to~\eqref{general law}. 
In Section 3, we obtain the key estimate~\eqref{preserving}
and derive the lower-bound gradient and perimeter estimates of 
$0$-level sets of viscosity solutions of local equations. 
In Section 4, 
we consider the \textit{level set} equation of 
\eqref{general law} and give the proof for the short time 
uniqueness result (Theorem \ref{uniqueness}). 
Section 5 is devoted to
existence and uniqueness results 
for the level set equations of 
\eqref{dislocation model}, 
\eqref{FN model} and \eqref{measure model} 
as applications of Theorem \ref{uniqueness}.

\medskip
\noindent
{\bf Notations.} 
For some $k\in\N$, we denote by $\R^{k}$ 
the $k$-dimensional Euclidean space
equipped with the usual Euclidean inner product
 $\langle\cdot,\cdot\rangle$,
and by $\cS^{k}$ the space of $k\times k$ symmetric matrices. 
%Let $A\subset \R^k$, $B\subset \R^l$. 
%We denote by $C(A,B)$ 
%the sets of continuous functions on $A$ with values in $B$. 
%We denote by $L^1(A,B), \Li(A,B)$ and $\W(A,B)$ 
%the set of functions on $A$ with values in $B$ 
%whose absolute value has a finite Lebesgue integral, 
%which are bounded almost everywhere on $A$ 
%and which are weak differentiable and 
%the distributional first derivatives 
%are bounded almost everywhere on $A$, respectively. 
%When the set $B$ is clear by the context, we may omit 
%writing $B$ in the above notation. 
We write $B(x,r)=\{y\in\R^{k}\mid |x-y|<r\}$ for $x\in\R^{k},$ 
$r>0,$ and 
$A+rB(0,1):=\{x+y\mid x\in A, y\in B(0,r)\}$ 
for $A\subset\R^{k}$.   
The symbols $\cL^{k}(A)$ and $\cH^{k}(A)$ denote the $k$-dimensional Lebesgue 
and Hausdorff measures, respectively. 
We write $X^{T}$ for the transpose of the matrix $X$ and 
$|X|=\sup\{|X\xi|\mid\xi\in\R^{k}, |\xi|=1\}$. 
Finally, for $a,b\in\R$, 
we write $a\land b=\min\{a,b\}$ and 
$a\vee b=\max\{a,b\}$. 
%We omit to express the dependence of $N,T$. 

\medskip
\noindent
\textbf{Acknowledgements. }
We are grateful 
to A.~Chambolle, E.~Jakobsen and L.~Rifford 
for their comments and advice. 
This work was done while H. Mitake was visiting 
the Laboratoire de 
Math\'ematiques et Physique Th\'eorique, Universit\'e 
de Tours. 
His grateful thanks go to the faculty and staffs.

%%%%%%%%%%%%%%%%%%%%%%%%%%%%%%%%%%%%%%%%%%%%%%%%%%%%%%%%%%%%%
%%%%%%%%%%%%%%%%%%%%%%%%%%%%%%%%%%%%%%%%%%%%%%%%%%%%%%%%%%%%%
%%%%%%%%%%%%%%%%%%%%%%%%%%%%%%%%%%%%%%%%%%%%%%%%%%%%%%%%%%%%%

\section{Continuous Dependence of Solutions}
In this section, we are concerned with the equation 
\begin{equation}\label{HJ}
u_t+H(x,t,Du,D^{2}u)=0
\quad\textrm{in} \ \Q, 
\end{equation}
$T>0$, $u:\Q\to\R$ is the unknown function, 
$u_t$, $Du$ and $D^{2}u$ stand respectively for its time 
and space derivatives, 
and Hessian matrix with respect to $x$ variable. 
We use the following assumptions. 
\begin{itemize}
\item[{\rm (A1)}]
$H\in C(\R^N\times[0,T]\times(\R^{N}\setminus\{0\})
\times\cS^{N})$. 

\item[{\rm (A2)}]
The equation is \textit{degenerate parabolic}, i.e., 
\[H(x,t,p,X)\ge H(x,t,p,Y), \] 
for any $(x,t,p)\in\R^N\times[0,T]\times(\R^{N}\setminus\{0\})$ 
and $X,Y\in\cS^{N}$ with $X\le Y$, 
where $\le$ stands for the usual partial ordering for 
symmetric matrices.

\item[{\rm (A3)}]
For any 
$(x,t)\in\cQ$, 
$H^{\ast}(x,t,0,0)=H_{\ast}(x,t,0,0)$, 
where 
$H^{\ast}$ (resp., $H_{\ast}$) is 
the upper-semicontinuous envelope 
(resp., lower semicontinuous envelope) 
of $H$. 

\item[{\rm (A4)}] There exist 
$\kap_{1},\kap_{2}\ge0$, 
$M\ge0$ such that  
\begin{align}
{}&
H_{2}(y,t,p,Y)
-
H_{1}(x,t,p,X)\nonumber\\
{}\le\, &
C_{R}\bigl(\frac{|x-y|^{4}}{\ep^{4}}+\kap_{1}
+\frac{\kap_{2}|x-y|^{2}}{\ep^{4}}+
\rho\|A^{2}\|
\bigr)
\label{assumption conti}
\end{align}
for any $\rho, \ep\in(0,1)$, $R>0$, 
$x,y\in \ol{B}(0,R)$, 
$p=4\ep^{-4}|x-y|^{2}(x-y)$,  
$X,Y\in\cS^{N}$ and some $C_{R}>0$ satisfying 
\begin{equation}\label{matrix ineq}
\left\{
\begin{aligned}
&
|p|\le M, \\
&
\left(
\begin{array}{cc}
X & 0 \\
0 & -Y 
\end{array}
\right)
\le 
A+\rho A^{2},  
\end{aligned}
\right.
\end{equation}
where 
\begin{equation}\label{matrix-A}
A:=
\frac{|x-y|^2}{\ep^{4}} 
\left(
\begin{array}{cc}
I & -I \\
-I & I 
\end{array}
\right)
+\frac{|x-y|^2 }{\ep^{4}}
\left(
\begin{array}{cc}
\hat p \otimes\hat p & 
-\hat p\otimes\hat p \\
-\hat p\otimes\hat p & 
\hat p\otimes\hat p
\end{array}
\right),
\end{equation}
with $\hat p :=p/|p|$.

\end{itemize}
We note that, in this section, we do not assume that 
$H$ is geometric. 

%%%%%%%%%%
\begin{thm}\label{continuous dependence}
Let $H_{1}, H_{2}$ be functions on 
$\R^N\times[0,T]\times(\R^{N}\setminus\{0\})
\times\cS^{N}$ satisfying assumptions 
{\rm (A1)--(A4)}. Let $u_1,u_2\in C(\cQ)$ be, respectively, a bounded viscosity subsolution 
and viscosity supersolution of \eqref{HJ} with $H=H_{i}$ 
for $i=1,2$. 
Assume that there exists $L=L_{u_i}>0$ such that
\begin{equation}\label{hyplip}
|u_{i}(x,t)-u_{i}(y,t)|\le L|x-y| 
\quad {\rm for \ all \ } x,y\in\R^N, \, t\in[0,T] 
\end{equation}
for either $i=1$ or $2$, and that there exists $R>0$ such that
\begin{equation}\label{umoins1}
u_i(x,t)=-1 \quad {\rm for \ all \ } x\in\R^N\setminus B(0,R), \, t\in[0,T] 
\end{equation}
for both $i=1$ and $2.$
Then there exists $M_{1}>0$ which depends only on 
$C$, $L$, 
$\|u_1\|_{\infty}$ and $\|u_2\|_{\infty}$ 
such that 
\begin{equation}\label{continuous dependence ineq}
\sup_{x\in\R^{N}}(u_1-u_2)(x,t)\le 
\sup_{x\in\R^{N}}(u_1-u_2)(x,0)\\
+M_{1}\bigl(\kap_{1}t+
(\kap_{2}t)^{1/2}
\bigr)
\end{equation}
for all $t\in[0,T]$. 
\end{thm}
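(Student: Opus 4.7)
The plan is to use the classical doubling-of-variables technique for viscosity solutions, but---crucially---\emph{without} sending the regularization parameter $\ep$ to $0$. Instead, we keep $\ep$ as a free parameter, derive a parametric bound of the form
\[
\sup(u_1-u_2)(\cdot,t) \le \sup(u_1-u_2)(\cdot,0) + c\ep^{4/3} + C(\kap_1 + \kap_2\ep^{-4/3})t,
\]
and then optimize in $\ep$ to generate the $\sqrt{\kap_2 t}$ term in~\eqref{continuous dependence ineq}.

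Without loss of generality assume $u_2$ satisfies~\eqref{hyplip}. For $\ep,\rho\in(0,1)$ and $\gam>0$ to be chosen, consider
\[
\Psi(x,y,t) := u_1(x,t) - u_2(y,t) - \frac{|x-y|^4}{\ep^4} - \gam t.
\]
By~\eqref{umoins1}, $\Psi$ attains its supremum on $\R^N\times\R^N\times[0,T]$ at some $(x^*,y^*,t^*)$ in a fixed compact set. The Lipschitz bound on $u_2$ together with first-order optimality at the max gives $|p|\le L$ where $p:=4\ep^{-4}|x^*-y^*|^2(x^*-y^*)$. This simultaneously verifies the hypothesis $|p|\le M$ of~(A4) with $M=L$ and produces the two essential bounds $|x^*-y^*|^4/\ep^4 \le c\ep^{4/3}$ and $|x^*-y^*|^2/\ep^4 \le c\ep^{-4/3}$.

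Assume next $t^*>0$. The parabolic version of Jensen--Ishii's lemma, applied with the penalization $|x-y|^4/\ep^4 + \gam t$, yields sub- and super-jets at $(x^*,t^*)$ and $(y^*,t^*)$ with common spatial component $p$, Hessian parts $X,Y$ satisfying~\eqref{matrix ineq} with $A$ as in~\eqref{matrix-A}, and time derivatives differing by exactly $\gam$. Subtracting the sub- and super-solution inequalities and invoking~(A4),
\[
\gam \le C_R\Bigl(\frac{|x^*-y^*|^4}{\ep^4} + \kap_1 + \frac{\kap_2|x^*-y^*|^2}{\ep^4} + \rho\|A^2\|\Bigr).
\]
From~\eqref{matrix-A} one reads off $\|A\|\le c|x^*-y^*|^2/\ep^4$, hence $\|A^2\|\le c|x^*-y^*|^4/\ep^8$; choosing $\rho:=\ep^4$ renders $\rho\|A^2\|=O(\ep^{4/3})$. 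Combined with the previous estimates,
\[
\gam \le C_R\bigl(c_1\ep^{4/3} + \kap_1 + c_2\kap_2\ep^{-4/3}\bigr) =: \tilde\gam(\ep).
\]
Consequently, as soon as $\gam>\tilde\gam(\ep)$, no interior-time maximum exists and $t^*=0$, and the Lipschitz bound yields $\sup_{x,y}\Psi(x,y,0)\le \sup_x(u_1-u_2)(x,0)+c\ep^{4/3}$ (by maximizing $L|x-y|-|x-y|^4/\ep^4$ in $|x-y|$). Inserting $\Psi(x,x,t)\le\sup\Psi$ and letting $\gam\downarrow\tilde\gam(\ep)$ gives, for every $\ep\in(0,1)$,
\[
\sup_x(u_1-u_2)(x,t) \le \sup_x(u_1-u_2)(x,0) + c\ep^{4/3} + \tilde\gam(\ep)\,t.
\]
The theorem follows by balancing $\ep^{4/3}$ against $\kap_2 t\,\ep^{-4/3}$, namely $\ep^{4/3}\sim\sqrt{\kap_2 t}$, and falling back on the trivial bound $\|u_1\|_\infty+\|u_2\|_\infty$ when this optimal $\ep$ exceeds $1$.

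The main technical obstacle is handling the $\rho\|A^2\|$ term produced by Jensen--Ishii. The key observation is that the matrix $A$ in~\eqref{matrix-A} has norm of order $|x-y|^2/\ep^4$, so that with the good choice $\rho=\ep^4$ this term is $O(\ep^{4/3})$ and does not spoil the optimization. The Lipschitz hypothesis~\eqref{hyplip} on one of the solutions is used essentially twice: to secure the structural requirement $|p|\le M$ of~(A4), and to produce the quantitative estimate $|x^*-y^*|=O(\ep^{4/3})$ that drives the final rate.
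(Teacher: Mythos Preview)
Your approach is essentially the same as the paper's: doubling of variables with the quartic penalization $|x-y|^4/\ep^4$, a linear-in-$t$ term forcing the maximum to $t=0$, the Lipschitz bound to get $|x^*-y^*|=O(\ep^{4/3})$, and a final optimization in $\ep$. One cosmetic difference is that the paper simply sends $\rho\to 0$ after applying (A4) (the maximum point and $p$ do not depend on $\rho$, so this is legitimate and makes the $\rho\|A^2\|$ term disappear outright), whereas you make the explicit choice $\rho=\ep^4$; both work.

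There is, however, one genuine omission. You invoke (A4) directly after Jensen--Ishii, but (A4) is formulated with $\hat p=p/|p|$ and the matrix $A$ in~\eqref{matrix-A}, which are undefined when $p=0$, i.e., when $x^*=y^*$. Since $H_1,H_2$ are singular at $p=0$ (they are only assumed continuous on $\R^N\setminus\{0\}$ in (A1)), the viscosity inequalities at such a point read
\[
a+(H_1)_*(x^*,t^*,0,X)\le 0,\qquad b+(H_2)^*(x^*,t^*,0,Y)\ge 0,
\]
and (A4) does not apply. The paper treats this case separately: when $x^*=y^*$ one has $A=0$, hence $X\le 0\le Y$ from~\eqref{matrix ineq}; then degenerate ellipticity (A2) together with (A3) gives
\[
\gam\le (H_2)^*(x^*,t^*,0,0)-(H_1)_*(x^*,t^*,0,0)=0,
\]
which is again a contradiction for $\gam>0$. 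You should add this short argument; without it the case $x^*=y^*$ is not covered and assumption (A3) is never used.
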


%%%%%%%
\begin{rem}
An assumption like (A4) is natural in viscosity
theory to obtain continuous dependence results of the type
\eqref{continuous dependence ineq} and the regularity of the solution (cf. \eqref{hyplip}) is a key ingredient too, see \cite{BJ, JK1, JK2}.
In Example \ref{ex-1} below, we show that (A4)
holds in the cases we are interested in. 
Note that \eqref{umoins1} are not restrictive
assumptions when dealing with front propagation problems, see
\cite{G, BL, BCLM2, BCLM3}. 
\end{rem}
%%%%%%%

\begin{proof}
Let $\ep\in(0,1)$ and $K>0$. 
We shall later fix $\ep, K$. 
Consider 
\[
\sup_{x,y\in\R^N,t\in[0,T]}
\{u_1(x,t)-u_2(y,t)-
\frac{|x-y|^4}{\ep^4}-Kt\}. 
\]
Noting \eqref{umoins1}, 
it is clear that 
the supremum is attained at 
$(\ol{x},\ol{y},\ol{t})\in 
\ol{B}(0,R+1)^{2}\times[0,T]$  
for small $\ep>0$. 

We consider the case where $\ol{t}\in(0,T]$. 
In view of Ishii's Lemma, for any $\rho>0$, 
there exist 
$(a,p,X)\in \ol{J}^{2,+}u_{1}(\ol{x},\ol{t})$ and 
$(b,p,Y)\in \ol{J}^{2,-}u_{2}(\ol{y},\ol{t})$ 
(see \cite{CIL} for the notation)
such that 
\begin{gather}\label{ishii-lem}
a-b\ge K, \quad 
p=\frac{4|\ol{x}-\ol{y}|^2}{\ep^{4}}(\ol{x}-\ol{y}), \ 
\left(
\begin{array}{cc}
X & 0 \\
0 & -Y 
\end{array}
\right)
\le 
A+\rho A^{2}, 
\end{gather}
where $A$ is the matrix defined by \eqref{matrix-A}. 
The definition of viscosity solutions immediately implies the
following inequalities: 
\[
a+(H_{1})_{\ast}(\ol{x},\ol{t},p,X)\le 0, 
\quad
b+(H_{2})^{\ast}(\ol{y},\ol{t},p,Y)\ge 0.
\]
Hence we have 
\begin{equation}\label{ineq-1}
K+(H_{1})_{\ast}(\ol{x},\ol{t},p,X)-(H_{2})^{\ast}(\ol{y},\ol{t},p,Y)\le0. 
\end{equation}
Using that $u_{1}$ or $u_{2}$ is Lipschitz continuous with respect to 
$x$ variable, 
we get, by standard estimates, 
\[
|\ol{x}-\ol{y}|\le M\ep^{4/3}, \ \ 
|p|\le M, \ 
\]
where $M$ is a positive constant which depends 
only on 
$\|u_1\|_{\Li(\cQ)}$, 
$\|u_2\|_{\Li(\cQ)}$ and 
$L$.

We now distinguish two cases: 
(i) 
for any $\ep\in(0,1)$, 
$p\not=0$; 
(ii) 
there exist 
$\{\ep_j\}_{j\in\N}$ 
such that 
$\ep_j\to0$ as $j\to\infty$, 
$p=0$ 
for any $j\in\N$.

We first consider case (i). 
In view of (A4), we have 
\begin{align*}
K
\le \ & 
H_{2}(\ol{y},\ol{t},p,Y)
-
H_{1}(\ol{x},\ol{t},p,X)\\
{}\le \, &
C_{R}
\bigl(\frac{|\ol{x}-\ol{y}|^{4}}{\ep^{4}}+\kap_{1}
+\frac{\kap_{2}|\ol{x}-\ol{y}|^{2}}{\ep^{4}}+
\rho\|A^{2}\|
\bigr). 
\end{align*}
Sending $\rho\to0$, 
we get 
\begin{align*}
K\le \, & 
C_{R}
\bigl(\frac{|\ol{x}-\ol{y}|^{4}}{\ep^{4}}+\kap_{1}
+\frac{\kap_{2}|\ol{x}-\ol{y}|^{2}}{\ep^{4}}\bigr)\\
{}\le  \, & 
\tilde{C}\bigl(\ep^{4/3}+\kap_{1}
+\frac{\kap_{2}}{\ep^{4/3}}\bigr)
=:C_{\ep} 
\end{align*}

In case (ii), we have $\ol{x}=\ol{y}$. 
Due to \eqref{ishii-lem},  
we have $A=0$, $X\le0$ and $Y\ge0$. 
By (A2), we have 
\[
(H_{1})_{\ast}(\ol{x},\ol{t},0,X)\ge 
(H_{1})_{\ast}(\ol{x},\ol{t},0,0)
\ \textrm{and} \ 
(H_{2})^{\ast}(\ol{y},\ol{t},0,Y)\le 
(H_{2})^{\ast}(\ol{y},\ol{t},0,0). 
\]
Therefore, we get 
\begin{align*}
K\le \, & 
(H_{2})^{\ast}(\ol{y},\ol{t},0,Y)
-
(H_{1})_{\ast}(\ol{x},\ol{t},0,X)\\
{}\le \, & 
(H_{2})^{\ast}(\ol{y},\ol{t},0,0)
-
(H_{1})_{\ast}(\ol{x},\ol{t},0,0)
=0. 
\end{align*}

Set $K=C_{\ep}+\tilde{C}\ep^{4/3}$ and then 
the two above cases cannot hold; this means that 
necessarily we have $\ol{t}=0$. 
Therefore, for any $(x,t)\in \ol{B}(0,R)\times[0,T]$, 
\begin{align*}
{}&
(u_1-u_2)(x,t)\\
\le &
u_1(\ol{x},0)-u_2(\ol{y},0)
+Kt\\
\le &
\sup_{x\in\R^{N}}(u_1-u_2)(x,0)+
ML\ep^{4/3}
+\tilde{C}(2\ep^{4/3}+\kap_{1}+\frac{\kap_{2}}{\ep^{4/3}})t. 
\end{align*}
An optimization with respect to $\ep>0$ yields 
\[
(u_1-u_2)(x,t)\le 
\sup_{x\in\R^{N}}(u_1-u_2)(\cdot,0)
+M_{1}\bigl(\kap_{1}t+
(\kap_{2}t)^{1/2}
\bigr)\]
for some 
$M_{1}=
M_{1}(C_{R},L, 
\|u_{1}\|_{\infty},\|u_{2}\|_{\infty})>0$. 
\end{proof}
%%%%%%%%%%%%%%%%%%%%%%%%%

\begin{ex}\label{ex-1}
{\rm 
We consider the functions $H_{i}:\cQ\times(\R^N\setminus\{0\})
\times\cS^{N}$ defined by 
\begin{equation}\label{Hi-ex1}
H_{i}(x,t,p,X)=
\inf_{\al\in\cA}\sup_{\beta\in\cB}
\bigl\{-c^{\al,\beta}_{i}(x,t,p)|p|
-\tr 
\bigl(\sig^{\al,\beta}_{i}(x,t,p)(\sig^{\al,\beta}_{i})^T(x,t,p)X\bigr)\bigr\} 
\end{equation}
for $i=1,2$, 
where $\cA, \cB$ are compact metric space and 
$c^{\al,\beta}_{i}$, $\sig^{\al,\beta}_{i}$ 
are, respectively, real-valued functions 
and 
$m\times N$ matrix valued functions for some $m\in\N$ 
on $\R^N\times[0,T]\times(\R^{N}\setminus\{0\})$ 
with a possible singularity at $p=0.$ 
We assume that 
the functions $c_{i}^{\al,\beta}$, $\sig^{\al,\beta}_{i}$ 
satisfy the following conditions 
by replacing $h$ by $c_{i}^{\al,\beta}$, $\sig^{\al,\beta}_{i}$ 
for any $\al\in\cA$, $\beta\in\cB$, 
$i=1,2$, respectively{\rm :} 
$h$ are continuous on $\cQ$ and 
for some $L, M\ge0$ {\rm (}independent of $\al,\beta${\rm )}, 
\begin{equation}\label{c-1}
\begin{array}{c}
|h(x,t,p)-h(y,t,p)|\le L|x-y|, \\
|h(x,t,p)|\le M
\end{array}
\end{equation}
for all $x,y\in \R^N$, $t\in[0,T]$, $p\in\R^N\setminus\{0\}$.

Let 
$\rho, \ep\in(0,1)$, $x,y\in \ol{B}(0,R)$, 
$p=\ep^{-4}|x-y|^{2}(x-y)$,  
$X,Y\in\cS^{N}$ 
satisfy \eqref{matrix ineq} for some $R>0$ and 
let $A$ be the matrix given by \eqref{matrix-A}. 
We omit the dependence of $\al,\beta$ 
for simplicity of notation.  
We calculate that 
\begin{align*}
{}& 
(c_{1}(x,t,p)-
c_{2}(y,t,p))|p|\\
= \ &
(c_{1}(x,t,p)
-c_{1}(y,t,p))|p|
+(c_{1}(y,t,p)-
c_{2}(y,t,p))|p|\\
\le \ &
|p|(L|x-y|
+\|c_1-c_2\|_{\infty}) \\
\le \ &
L\frac{|x-y|^4}{\ep^{4}}
+M\|c_1-c_2\|_{\infty},
\end{align*}
where 
$\|\cdot\|_{\infty}
=\|\cdot\|_{\Li(B(0,R)
\times(0,T)\times(\R^N\setminus\{0\}))}$ 
and
\begin{align*}
{}&
\tr(\sig^{x}_{1}(\sig_{1}^{x})^TX)-
\tr(\sig^{y}_{2}(\sig_{2}^{y})^TY) \\
= \ &
\sum_{i=1}^{N}\bigl\{
\langle X\sig^{x}_{1}e_{i}, \sig^{x}_{1}e_{i}\rangle
-
\langle Y\sig^{y}_{2}e_{i}, \sig^{y}_{2}e_{i}\rangle
\bigr\}\\
{}\le& \ 
\sum_{i=1}^{N}\bigl\{
\big\langle 
A
\left(
\begin{array}{c}
\sig^{x}_{1}e_{i} \\
\sig^{y}_{2}e_{i} 
\end{array}
\right)
, 
\left(
\begin{array}{c}
\sig^{x}_{1}e_{i} \\
\sig^{y}_{2}e_{i} 
\end{array}
\right)
\big\rangle 
+\rho 
\big\langle 
A^{2} 
\left(
\begin{array}{c}
\sig^{x}_{1}e_{i} \\
\sig^{y}_{2}e_{i} 
\end{array}
\right)
, 
\left(
\begin{array}{c}
\sig^{x}_{1}e_{i} \\
\sig^{y}_{2}e_{i} 
\end{array}
\right) 
\big\rangle 
\bigr\}\\
{}\le
& \ 
\sum_{i=1}^{N}\bigl\{
\big\langle 
A
\left(
\begin{array}{c}
\sig^{x}_{1}e_{i} \\
\sig^{y}_{2}e_{i} 
\end{array}
\right)
, 
\left(
\begin{array}{c}
\sig^{x}_{1}e_{i} \\
\sig^{y}_{2}e_{i} 
\end{array}
\right)
\big\rangle 
+
C\rho\|A^{2}\|
(\|\sig^{x}_{1}\|^{2}_{\infty}+\|\sig^{x}_{2}\|^{2}_{\infty})
\end{align*}
for some $C>0$, 
where 
$\{e_{i}\}_{i}$ is the canonical basis of $\R^N$, 
$\sig^{x}_{1}:=
\sig_1^{\al,\beta}(x,t,p)$ and 
$\sig^{y}_{2}:=
\sig_2^{\al,\beta}(y,t,p)$.  
Due to \eqref{matrix ineq}, we have 
\begin{align*}
{}&
\big\langle 
A
\left(
\begin{array}{c}
\sig^{x}_{1}e_{i} \\
\sig^{y}_{2}e_{i} 
\end{array}
\right)
, 
\left(
\begin{array}{c}
\sig^{x}_{1}e_{i} \\
\sig^{y}_{2}e_{i} 
\end{array}
\right)
\big\rangle 
\le 
\frac{2}{\ep^{4}}|x-y|^2
|(\sig^{x}_{1}-\sig^{y}_{2})e_i|^2\\
\le \ & 
\frac{4}{\ep^4}|x-y|^2
\bigl\{
|\sig_1(x,t,p)-
\sig_1(y,t,p)|^2
+\|\sig_1-\sig_2\|_{\infty}^2
\bigr\}\\
\le \ & 
\frac{4}{\ep^4}|x-y|^2
\bigl(L^2|x-y|^2
+
\|\sig_1-\sig_2\|_{\infty}^2
\bigr). 
\end{align*}

From the above computations, 
it follows that 
the inequality (A4) holds 
by replacing $\kap_{1}$ and $\kap_{2}$ by 
$\sup_{\cA\times\cB}
\|c_1-c_2\|_{\infty}$ and 
$\sup_{\cA\times\cB}
\|\sig_1-\sig_2\|_{\infty}^{2}$, respectively. 
Therefore, 
if the $u_i$'s are solutions of \eqref{HJ} with $H=H_{i}$
given by \eqref{Hi-ex1}, $i=1,2,$ then the conclusion
of Theorem {\rm \ref{continuous dependence}} holds and reads
\begin{multline*}
\sup_{x\in\R^{N}}(u_1-u_2)(x,t)\le 
\sup_{x\in\R^{N}}(u_1-u_2)(x,0)\\
+M_{1}\sup_{\cA\times\cB}
\bigl(t\|c_1^{\al,\beta}-c_2^{\al,\beta}\|_{\infty}
+\sqrt{t}\|\sig_1^{\al,\beta}-\sig_2^{\al,\beta}\|_{\infty}
\bigr)
\end{multline*}
for all $t\in[0,T].$
Finally, note that, applying Theorem {\rm \ref{continuous dependence}}
with $H_1=H_2,$ gives comparison and uniqueness for  \eqref{HJ}.
}
\end{ex}

%\begin{ex}\label{ex-2}
%We consider the functions $H_{i}:\cQ\times(\R^N\setminus\{0\})
%\times\cS^{N}$ defined by 
%\[
%H_{i}(x,t,p,X)=
%\inf_{\al\in\cA}\sup_{\beta\in\cB}
%-\Bigl(\frac{1}{|p|}\tr \bigl(b^{\al,\beta}_{i}(x,t,p)X\bigr)
%\Bigr)_{+}^{\gam}|p|\bigr\} 
%\]
%for $i=1,2$, 
%where 
%the functions 
%$b^{\al,\beta}_{i}$ satisfy the same assumptions 
%as that of Example \ref{ex-1}, 
%$\gam\in(0,1]$ and 
%we write $r_{+}=\max\{r,0\}$ 
%for $r\in\R$. 
%Noting that 
%$r_{+}^{\gam}-s_{+}^{\gam}\le (r-s)_{+}^{\gam}$, 
%we calculate that 
%
%\end{ex}

\begin{rem}\label{appr-arg} For the applications we have in mind, 
a continuous dependence result for equations with a measurable dependence 
in time will be needed. 
We do not state a precise result here but we mention that it can 
be obtained by an easy approximation argument.
\end{rem}

%%%%%%%%%%%%%%%%%%%%%%%%%%%%%%%%%%%%%%%%%%%%%%%%%%%%%%%%%%%%%%%%%%
%%%%%%%%%%%%%%%%%%%%%%%%%%%%%%%%%%%%%%%%%%%%%%%%%%%%%%%%%%%%%%%%%%
\section{Estimates on Lower-Bound Gradient and Properties of Fronts}

We consider the initial value problem in this section 
\begin{equation}\label{e-2}
\left\{
\begin{aligned}
&
u_{t}+H(x,t,Du,D^{2}u)=0 
&& \textrm{in} \ \Q, \\
&u(\cdot,0)=u_0
&& \textrm{in} \ \R^{N}. 
\end{aligned}
\right.
\end{equation}

We make the following assumption 
on $u_{0}$ throughout this section. 
\begin{itemize}
\item[(I1)]
$u_0\in\W(\R^N)$ and 
$|u_0(x)|\le 1$ for all $x\in\R^N$ and 
there exists $R_0>0$ such that $u_0(x)=-1$ 
for all $x\in\R^N\setminus B(0,R_0)$. 

\item[(I2)]
There exist constants $\lam_0, \del_0\in(0,1)$, 
$\eta_0>0$ and $\nu\in C(\R^N,\R^{N})$ such that 
\begin{eqnarray} \label{inegI2}
u_0(x+\lam\nu(x))\ge u_0(x)+\lam\eta_{0}
\quad\textrm{for all} 
\ x\in U_{0}, \ \lam\in[0,\lam_{0}], 
\end{eqnarray}
where $U_{0}:=\{x\in\R^{N}\mid |u_0(x)|\le \del_{0}\}$. 
\end{itemize}

%%%%%%%%%%%%%%%%
\begin{rem}\label{rmk-C1-lisse-geom}
Without loss of generality, we may assume that 
$\nu$ is 
a smooth bounded Lipschitz continuous function 
and henceforth 
we will assume it from now on. 
Indeed, let $\nu_{\ep}\in C^{\infty}(\R^{N},\R^{N})$ for $\ep\in(0,1)$ 
be an approximate function of $\nu$, then we have 
\begin{align*}
u_{0}(x+\lam\nu_{\ep}(x))
&=\, 
u_{0}(x+\lam\nu(x)+\lam(\nu_{\ep}(x)-\nu(x)))\\
{}&\ge\, 
u_{0}(x+\lam\nu(x))
-\lam\|Du_{0}\|_{L^{\infty}(U_0)}\|\nu_{\ep}-\nu\|_{L^{\infty}(U_0)}. 
\end{align*}
If $\ep$ is enough small, then we have 
\[
u_0(x+\lam\nu_{\ep}(x))\ge u_0(x)+\lam\frac{\eta_{0}}{2}
\quad\textrm{for all} 
\ x\in U_{0}, \ \lam\in[0,\lam_{0}]. 
\]
\end{rem}
%%%%%%%%%%%%%%%%%%%

 Let $u_0\in C^1(\R^N)$ such that 
\begin{equation}\label{cond641}
Du_0\not= 0 \quad {\rm on} \quad \Gamma_0:=\{u_0=0\}.
\end{equation}
Then, for $\del_{0}>0$ and $\lam_{0}$ enough small, 
$Du_0\not= 0$ in $U_0+B(0,\lam_{0}\|\nu\|_{\infty})$ 
and, setting $\nu(x)=Du_0(x),$ we have 
\begin{equation*}
u_{0}(x+\lam\nu(x))= 
u_{0}(x)+\lam |Du_{0}(x)|^2+
 \lam \omega_{U_0}(M\lam ),
\end{equation*}
where $\om_{U_0}$ is a modulus of continuity of $Du_0$
in $\ol{U}_0+B(0,\lam_{0}\|\nu\|_{\infty})$ 
and $M=\max_{\ol{U}_0+B(0,\lam_{0}\|\nu\|_{\infty})}|Du_0|.$
Therefore (I2) holds for 
$\eta = \min_{\ol{U}_0+B(0,\lam_{0}\|\nu\|_{\infty})}|Du_0|/2$
and $\lam\in [0,\lam_0]$ for $\lam_0$ enough small.
Moreover, the Implicit 
Function Theorem implies that $\Gamma_0$ is a $C^1$ hypersurface.
Conversely, assume that $\Gamma_0$ is a $C^1$ hypersurface
with the {\it unique nearest point property} (that is,
there exists a neighborhood $U_0$ of $\Gamma_0$ such that,
for all $x\in U_0,$ there exists a unique 
$\overline{x}\in \Gamma_0$ such that ${\rm dist}(x,\Gamma_0)
= |x-\overline{x}|$). Then the signed distance function $d_{\Gamma_0}^s$ 
to $\Gamma_0$ is $C^1$ (see \cite{foote84}). It follows that
(I1), (I2) hold with
$u_0$ such that $u_0= d_{\Gamma_0}^s$ in a neighborhood of $\Gamma_0$
and $u_0$ is a suitable regularization of $d_{\Gamma_0}^s$ elsewhere. 
More generally, when
considering front propagation problems, it may be convenient to have
a characterization of (I2) in geometrical terms.
Such a result does not seem obvious. However, we have partial
results in the following lemma, the proof of which is given in the appendix
with additional comments. 

A subset $A\subset\R^N$ is {\it star-shaped} with respect to $x_0$ if,
for every $x\in A,$ the segment 
$[x_0,x):=\{\lam x+(1-\lam)x_0, \lam\in [0,1)\}$
belongs to $A.$ It is {\it star-shaped with respect to a ball}
$B(x_0,r_0)$ if $A$ is star-shaped with respect to every $y\in B(x_0,r_0).$

%%%%%%%%%
\begin{lem} \label{I2geom}
Let $\Omega_0\subset \R^N$ be an open bounded set with boundary
$\partial\Omega_0=:\Gamma_0.$

\begin{itemize}
\item[(i)] 
{\rm (Star-shaped with respect to a ball domains)}
The set $\Omega_0$ is star-shaped with respect to a ball, i.e.,
there exists a compact subset ${\mathcal K}\subset\R^N$ and $r_0>0$
such that
\begin{equation}\label{strict-etoile-boule}
\Omega_0= \mathop{\bigcup}_{x\in {\mathcal K}}\mathop{\bigcup}_{\alpha\in [0,1]}
\ol{B}(\alpha x, (1-\alpha)r_0),
\end{equation}
if and only if there exists $u_0:\R^N\to \R$ 
such that 
\begin{equation}\label{u0representant}
\Gamma_0=\{u_0=0\}, \qquad
\Omega_0=\{u_0>0\}
\end{equation}
and {\rm (I1), (I2)} hold in $U_0$ with $\nu(x)=-x.$ 
In this case, $\Gamma_0$
is  locally the graph of a Lipschitz continuous
function.

\item[(ii)]
If there exists $K>0$ such that 
$\Gamma_0$ is locally the graph of a Lipschitz continuous
function with constant $K,$ then there exists $u_0$ such that
\eqref{u0representant},  {\rm (I1)} and {\rm (I2)} hold.
\end{itemize}
\end{lem}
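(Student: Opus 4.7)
The plan is to establish the two implications of part (i) by independent arguments---the Minkowski gauge of $\Om_0$ for the forward direction and a ray-based analysis of (I2) for the reverse---and to deduce (ii) from a partition-of-unity construction adapted to the local Lipschitz-graph structure of $\Gam_0$. For the forward implication of (i), I would assume \eqref{strict-etoile-boule} and introduce the Minkowski gauge $j_{\Om_0}(x) := \inf\{t > 0 : x \in t\Om_0\}$. Since $\ol{B}(0, r_0) \subset \Om_0$ (taking $\alpha = 0$ in \eqref{strict-etoile-boule}), $j_{\Om_0}$ is globally Lipschitz with constant $1/r_0$, and its positive $1$-homogeneity gives $j_{\Om_0}((1-\lam)x) = (1-\lam) j_{\Om_0}(x)$. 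Setting
\[
u_0(x) := \max\{-1,\, 1 - j_{\Om_0}(x)\}
\]
produces a bounded Lipschitz function with $\{u_0 = 0\} = \Gam_0$, $\{u_0 > 0\} = \Om_0$ and $u_0 \equiv -1$ outside a sufficiently large ball, giving (I1). On $U_0 = \{|u_0| \le \del_0\}$ with $\del_0 < 1$ the truncation is inactive, so $u_0((1-\lam)x) = u_0(x) + \lam j_{\Om_0}(x) \ge u_0(x) + \lam(1-\del_0)$, which is precisely (I2) with $\nu(x) = -x$ and $\eta_0 = 1 - \del_0$. The local Lipschitz-graph property of $\Gam_0$ follows from the standard uniform interior cone condition enjoyed by any domain star-shaped with respect to a ball.

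For the converse of (i), I would assume (I1), (I2) with $\nu(x) = -x$ and first show $0 \in \Om_0$. Suppose instead $0 \notin \Om_0$; fix $x_0 \in \Gam_0$ and study $g(t) := u_0(t x_0)$ on $[0,1]$. Applying (I2) at $x_0 \in U_0$ yields $g(1-\lam) \ge \lam \eta_0 > 0$ for $\lam \in (0, \lam_0]$, while $g(0) \le 0$, so by the intermediate value theorem $g$ has a zero $t_1 \in (0, 1-\lam_0]$. Iterating (I2) at each successive zero produces a sequence with $t_{n+1} \le (1-\lam_0)t_n$ and $t_n x_0 \in \Gam_0$; hence $t_n x_0 \to 0$ and compactness of $\Gam_0$ forces $0 \in \Gam_0$, so $0 \in U_0$, and applying (I2) at $x = 0$ (where $\nu(0) = 0$) gives the contradictory $0 = u_0(0) \ge u_0(0) + \lam\eta_0$. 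Hence $B(0, r_0) \subset \Om_0$ for some $r_0 > 0$. To obtain \eqref{strict-etoile-boule} I take $\mathcal{K} := \Gam_0$: for $y \in \Om_0 \setminus \ol{B}(0, r_0)$, $t^* := \sup\{t > 0 : ty \in \Om_0\}$ is finite with $t^* > 1$ (by openness of $\Om_0$), and $x := t^* y \in \Gam_0$, whence $y = \alpha x \in \ol{B}(\alpha x, (1-\alpha)r_0)$ with $\alpha = 1/t^* \in (0,1)$; points of $\ol{B}(0, r_0)$ are handled by $\alpha = 0$. The reverse inclusion---that $\co(\{x\} \cup \ol{B}(0, r_0)) \subset \ol{\Om_0}$ for each $x \in \Gam_0$---is the delicate step and would be obtained by a further ray-based argument, applying (I2) at successive boundary crossings along rays to exclude that $\Om_0$ fails star-shape with respect to any $z \in B(0, r_0)$.

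For part (ii), I would cover $\Gam_0$ by finitely many balls $B_i = B(p_i, \rho)$ in each of which $\Gam_0$ is, in some orthogonal frame, the graph of a $K$-Lipschitz function over a hyperplane, with fixed unit downward normal $e_i$ pointing into $\Om_0$. The Lipschitz bound $K$ provides a uniform aperture $\theta = \theta(K) > 0$ such that the open cone of axis $e_i$ and aperture $\theta$ at every $x \in \Gam_0 \cap B_i$ lies in $\Om_0$. Choosing a smooth partition of unity $\{\phi_i\}$ subordinate to $\{B_i\}$ on a neighborhood $W$ of $\Gam_0$, I would set $\nu(x) := \sum_i \phi_i(x) e_i$ on $W$ and extend it smoothly and boundedly to $\R^N$; convexity of the cone ensures that $\nu(x)$ still points strictly into the interior cone at every $x \in \Gam_0$. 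Finally, taking $u_0$ to be a smooth bounded truncation of the signed distance $d_{\Gam_0}^s$ (positive inside $\Om_0$) gives (I1), and the uniform cone condition yields $d_{\Gam_0}^s(x + \lam\nu(x)) \ge d_{\Gam_0}^s(x) + c\lam$ for some $c = c(K) > 0$ and all small $\lam$ uniformly on a neighborhood of $\Gam_0$, which transfers to (I2) for $u_0$. The main obstacle I anticipate is the reverse implication of (i): excluding interior zeros of $u_0$ along a radial ray and upgrading star-shape with respect to the single point $0$ to star-shape with respect to a full ball $B(0, r_0)$ requires a careful two-sided use of (I2), combining the inward positivity at a boundary crossing with the information at the next crossing further along the ray.
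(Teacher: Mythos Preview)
Your forward implication in (i) via the Minkowski gauge $j_{\Om_0}$ is correct and in fact cleaner than the paper's choice of the signed distance: the positive homogeneity of $j_{\Om_0}$ makes the verification of (I2) with $\nu(x)=-x$ a one-line computation, whereas with the signed distance one has to argue geometrically. (The Lipschitz continuity of $j_{\Om_0}$ does require star-shape with respect to a \emph{ball}, not merely a point, but you have this.)

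The converse of (i) is where your proposal has a genuine gap, and you identify it yourself. Your preliminary step showing $0\in\Om_0$ is fine but not the crux. What is missing is the argument that for every $x\in\Gamma_0$ and every $y\in B(0,r_0)$ the whole segment $(y,x)$ lies in $\Om_0$, with $r_0$ explicitly controlled. The paper carries this out with $r_0<\eta_0/\|Du_0\|_\infty$: setting $g(\lam)=u_0((1-\lam)x+\lam y)$, one first combines (I2) at $x$ with the Lipschitz bound to get $g(\lam)\ge(\eta_0-\|Du_0\|_\infty r_0)\lam>0$ for $\lam\in(0,\lam_0]$, and then runs a continuation argument: if $\lam^*:=\sup\{\lam\le 1:g>0\text{ on }(0,\lam]\}<1$, one rewrites $(1-\lam^*)x+\lam^* y$ as a short (I2)-step from the earlier point $(1-\lam^*+\ep)x+(\lam^*-\ep)y$ and derives $g(\lam^*)>0$, a contradiction. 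Your sketch (``applying (I2) at successive boundary crossings'') is in the right spirit but does not supply this mechanism; in particular, the explicit choice of $r_0$ in terms of $\eta_0$ and $\|Du_0\|_\infty$ is what makes the Lipschitz--(I2) combination work along rays to \emph{every} $y\in B(0,r_0)$, not just $y=0$.

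For (ii), your partition-of-unity construction of $\nu$ is a legitimate and simpler alternative to the paper's route through Clarke's normal cone and Michael's selection theorem. However, the final sentence---``the uniform cone condition yields $d^s_{\Gamma_0}(x+\lam\nu(x))\ge d^s_{\Gamma_0}(x)+c\lam$ \dots\ uniformly on a neighborhood of $\Gamma_0$''---is exactly the nontrivial step, and the cone property alone does not give it. The interior cone at a boundary point $z\in\Gamma_0$ controls $d^s$ along rays \emph{emanating from $z$}; it says nothing directly about the increment of $d^s$ from a nearby interior or exterior point $x$. To pass from the cone condition to an increment inequality valid in a full neighborhood one needs precisely what the paper establishes: that the Clarke generalized gradient $\partial d^s_{\Gamma_0}(x)$ stays bounded away from $0$ near $\Gamma_0$ and satisfies $\langle p,\nu(x)\rangle\ge\bar\eta$ for all $p\in\partial d^s_{\Gamma_0}(x)$, from which the increment inequality follows by the definition of the generalized directional derivative. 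Your $\nu$ would satisfy this (each $e_i$ does, by the Lipschitz-graph geometry, hence so do convex combinations), but you must invoke the nonsmooth calculus to close the argument; as written, the proof of (ii) is incomplete at this last step.
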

%%%%%%%%%%%%%%%%%%%

Hereinafter, we set 
\begin{equation}\label{defpsi}
\psi_{\lam}(x):=x+\lam\nu(x)= (I+\lam\nu)(x).
\end{equation} 
From Remark \ref{rmk-C1-lisse-geom},
we may assume that $\nu$ is a smooth 
bounded Lipschitz continuous function and,
replacing $\lam_0$ by a smaller constant in order that
\begin{equation}\label{deflambda0}
\lam_0 \|\nu\|_\infty < 1 \quad {\rm and} \quad  
\lam_0 \|D\nu\|_\infty < 1,
\end{equation} 
we obtain that $\psi_{\lam}(x)$ is 
a $C^1$-diffeomorphism in $\R^N$ with
\begin{equation} \label{defxilam}
\xi_{\lam}:= \psi_{\lam}^{-1}= (I+\lam\nu)^{-1}, \quad
D\xi_{\lam}(x)= I+\sum_{k=1}^{\infty}(-\lam D\nu(x))^{k}.
\end{equation}

We assume (A1)--(A3) and 
make the following additional assumptions 
on $H$ throughout this section. 
\begin{itemize}
\item[{\rm (A5)}] 
The function $H$ is \textit{geometric}, i.e., 
\[
H(x,t,\al p, \al X+\beta p\otimes p)
=\al H(x,t,p,X)
\] 
for all $\al>0$, $\beta\in\R$, 
$(x,t,p,X)\in\cQ\times(\R^{N}\setminus\{0\})\times\cS^{N}$.

\item[{\rm (A6)}]
There exists $L_{H}>0$ such that 
\[|H(x,t,p,X)-H(x,t,p,Y)|\le L_{H}|X-Y|\]
for any $(x,t,p)\in\cQ\times(\R^{N}\setminus\{0\})$ 
and $X,Y\in\cS^{N}$. 

\item[{\rm (A7)}]
For any $R>0$, there exists $C_{H}>0$ such that 
\begin{align}
{}&
H\bigl(\psi_{\lam}(y),t,D\xi_{\lam}(\psi_{\lam}(y))^Tp, 
D\xi_{\lam}(\psi_{\lam}(y))^{T}YD\xi_{\lam}(\psi_{\lam}(y))\bigr) 
-
H(x,t,p, X) \nonumber\\
\le \, & 
C_{H}
\bigl(\frac{|x-y|^{4}}{\ep^{4}}+ 
\lam+\frac{\lam^{2}|x-y|^{2}}{\ep^{4}}+\rho\|A^{2}\|\bigr) 
\label{lower-assump}
\end{align}
for any $\lam\in[0,\lam_{0}]$ 
and 
for any $\rho,\ep\in(0,1)$, 
$x,y\in \ol{B}(0,R)$, 
$t\in[0,T]$, 
$p=4\ep^{-4}|x-y|^{2}(x-y)$, 
$X,Y\in\cS^{N}$ satisfying 
\eqref{matrix ineq} and 
$A\in\cS^{N}$ given by~\eqref{matrix-A}.  

\item[{\rm (A8)}] There exists at least one
viscosity solutions of \eqref{e-2} which
satisfies 
\begin{equation}\label{condmoins1}
u(x,t)=-1 
\quad\textrm{for all} \ 
(x,t)\in(\R^N\setminus B(0,R_{T}))\times[0,T] 
\end{equation}
for some $R_{T}>0$. 
\end{itemize}

%%%%%%%%%%%%%%%%
Let us make some comments about these new assumptions:
(A5) is needed to use the level set approach to describe
front propagation (see \cite{BSS, G} for instance). Assumption (A6)
is satisfied for a wide class of quasilinear equations
under interest in this paper, see Example \ref{ex-2}.
A consequence of (A5) and (A6) is: 
For any $R>0$, there exists $M_{R}>0$ such that 
\begin{equation}\label{ancien(A6)}
|H(x,t,p,X)|\le M_{R}(1+|X|) 
\quad\textrm{on} \ 
\ol{B}(0,{R})\times[0,T]\times 
({B}(0,R)\setminus \{0\})
\times\cS^{N}, 
\end{equation}
which is a crucial property to obtain H\"older continuity
in time for the solutions of \eqref{e-2}, see Proposition~\ref{regularity}.
Assumption (A7) is a natural condition to obtain a
preservation of the initial property (I2) during the evolution.
This condition is related to (A4);
it is worthwhile to notice, as it was done at the end of Example \ref{ex-1}, 
that such a condition gives uniqueness for the solutions of \eqref{HJ}.
Existence of solutions to \eqref{e-2} is assumed in (A8)
because it is not the point in this paper, see \cite{GGI, SS, BCLM3} for some
conditions which guarantee existence. More precisely, we have the following
result about solutions of \eqref{e-2} and the proof is given in
 Appendix:
%%%%%
\begin{prop}[Regularity of Solutions]\label{regularity}
There exists a unique viscosity solution $u\in C(\cQ)$ 
of \eqref{e-2} and we have
\begin{equation}\label{reg-lip-u-x}
|u(x,t)-u(y,t)|\le\|Du_{0}\|_{\Li(\R^N)}e^{Kt}|x-y|,
\end{equation}
\begin{equation}\label{reg-hold-u-t}
|u(x,t)-u(x,s)|\le\tilde{L}|t-s|^{1/2} 
\end{equation}
for all $x,y\in\R^N$, $t,s\in[0,T]$, 
where $K, \tilde{L}$ are positive constants which depend only 
on $C_{H}$ and $C_{H}, R_{T}, \|Du_{0}\|_{\infty}$ respectively. 
\end{prop}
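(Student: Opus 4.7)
The plan is to establish the four conclusions in the order (1) existence, (2) uniqueness, (3) spatial Lipschitz regularity, (4) H\"older-$1/2$ regularity in time. Existence comes directly from (A8). Uniqueness is a one-line application of Theorem \ref{continuous dependence} with $H_1 = H_2 = H$: specializing (A7) to $\lambda = 0$ yields exactly (A4) with $\kap_1 = \kap_2 = 0$, and \eqref{condmoins1} supplies the localization \eqref{umoins1}, so any two solutions of \eqref{e-2} coincide.

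For \eqref{reg-lip-u-x} I would run a doubling-of-variables argument in the spirit of the proof of Theorem \ref{continuous dependence}, with a time-weighted penalty. Setting $L_0 = \|Du_0\|_{\infty}$, I would study, for $K > 0$ and $\ep \in (0,1)$ to be chosen,
\[
\sup_{x,y\in\R^N,\, t\in[0,T]} \Bigl\{u(x,t) - u(y,t) - L_0 e^{Kt}\,\tfrac{|x - y|^4}{\ep^4}\Bigr\}.
\]
The localization \eqref{condmoins1} ensures this supremum is attained on a compact set. If it were positive at some $\bar t > 0$, Ishii's lemma would produce semi-jets fitting exactly the hypotheses of (A7) at $\lambda = 0$; combining the viscosity inequalities with (A7) and letting the perturbation $\rho \to 0$ would force $K L_0 e^{K \bar t}|\bar x - \bar y|^4/\ep^4 \le C_H|\bar x - \bar y|^4/\ep^4$, a contradiction once $K > C_H$. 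The singular alternative $\bar x = \bar y$ is ruled out through (A2)--(A3) exactly as in the proof of Theorem \ref{continuous dependence}. Hence the supremum is attained at $t = 0$, and optimizing in $\ep$ using the Lipschitz bound on $u_0$ delivers \eqref{reg-lip-u-x} with $K$ depending only on $C_H$.

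For \eqref{reg-hold-u-t}, fix $(x_0, t_0) \in \cQ$, set $L = L_0 e^{KT}$, and for $\mu > 0$ introduce the barriers
\[
\phi^{\pm}(x,t) = u(x_0, t_0) \pm L\bigl((|x - x_0|^2 + \mu)^{1/2} - \mu^{1/2}\bigr) \pm C_\mu (t - t_0),
\]
with $C_\mu = M_{R_T}(1 + L \mu^{-1/2})$. Using \eqref{ancien(A6)} (the consequence of (A5) and (A6)) one checks that $\phi^+$ and $\phi^-$ are respectively a super- and a subsolution of \eqref{e-2} on $\R^N \times (t_0, T)$, while the spatial Lipschitz bound just proved yields $\phi^-(\cdot, t_0) \le u(\cdot, t_0) \le \phi^+(\cdot, t_0)$. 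Comparison, justified by \eqref{condmoins1} and the uniqueness already established, then gives $|u(x_0, t) - u(x_0, t_0)| \le C_\mu (t - t_0)$; the choice $\mu = t - t_0$ produces \eqref{reg-hold-u-t}.

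The main obstacle is the spatial Lipschitz step: matching the quartic scaling $p = 4\ep^{-4}|x-y|^2(x-y)$ built into (A7) with a Lipschitz conclusion, while simultaneously handling the singularity of $H$ at $p = 0$ via (A3), is exactly the structural difficulty present in Theorem \ref{continuous dependence} and it is resolved by the same dichotomy between $p \neq 0$ and $p = 0$ at the maximum point. A secondary, more organizational point is that the barrier comparison used in the time estimate relies on both uniqueness and the far-field behavior \eqref{condmoins1}, so the four steps must be executed in the listed order.
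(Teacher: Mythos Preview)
Your plan follows the same route as the paper: existence from (A8), uniqueness and the spatial Lipschitz bound via a doubling argument exploiting (A7) at $\lambda=0$, and the time-H\"older estimate via barriers (the paper simply cites \cite[Lemma~9.1]{BBL}, whose content is precisely the barrier construction you sketch). Two points need correcting, however.

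First, your ordering is circular: invoking Theorem~\ref{continuous dependence} in step~(2) requires the Lipschitz hypothesis~\eqref{hyplip}, which is only supplied by step~(3). The paper merges the two steps: it runs the doubling argument on $u(x,t)-v(y,t)$ for two solutions $u,v$ with the penalty $e^{Kt}\ep^{-4}|x-y|^4+\eta t$, and obtains comparison (hence uniqueness, by taking $x=y$) and the Lipschitz bound (by taking $u=v$) from the same computation. Your step~(3) is this argument for a single solution; simply do it first, and then step~(2) goes through.

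Second, the barriers in step~(4) do not satisfy the initial inequality you claim. With your $\phi^{+}$ one has
\[
\phi^{+}(x,t_0)-u(x_0,t_0)=L\bigl((|x-x_0|^2+\mu)^{1/2}-\mu^{1/2}\bigr)\le L|x-x_0|,
\]
so the Lipschitz bound $u(x,t_0)\le u(x_0,t_0)+L|x-x_0|$ does \emph{not} give $u(\cdot,t_0)\le\phi^{+}(\cdot,t_0)$; indeed $(r^2+\mu)^{1/2}-\mu^{1/2}\ge r$ is false for every $r>0$. The fix is to drop the $-\mu^{1/2}$ term: with $\phi^{+}(x,t)=u(x_0,t_0)+L(|x-x_0|^2+\mu)^{1/2}+C_\mu(t-t_0)$ the initial comparison holds, the supersolution verification via~\eqref{ancien(A6)} is unchanged, and evaluating at $x=x_0$ yields $u(x_0,t)-u(x_0,t_0)\le L\mu^{1/2}+C_\mu(t-t_0)$. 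Choosing $\mu=t-t_0$ then gives~\eqref{reg-hold-u-t}.
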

%%%%%%%%%%%%%%%

Now, we state the main result of this section.

%%%%%%%%%%%%
\begin{thm}[Key Estimate]\label{key estimate}
There exist $\ol{t}>0,$ $0<\ol{\lam}\leq \lam_0$
{\rm (}$\lam_0$ is given by {\rm (I2)} and satisfies~\eqref{deflambda0}{\rm )} 
and a non-increasing continuous function 
$\eta:[0,\ol{t}\land T]\to[0,\infty)$ 
which depend
only on $C_H,$ $L_H,$ $ \del_0,$ 
$\eta_0,$ $R_{T},$ $\|Du_{0}\|_{\infty},$ 
$\|\nu\|_{\infty},$ $\|D\nu\|_{\infty}$ such that 
\[
\eta(t)>0 \ \textrm{for all} \ t\in[0,\ol{t}\land T), 
\]
and $u$ satisfies 
\begin{equation}\label{main-estimate}
u(x+\lam\nu(x), t)\ge 
u(x,t)+\lam\eta(t) 
\quad \textrm{for all} \ 
x\in U_{t}, \ \lam\in[0,\ol{\lam}],
\end{equation}
where 
$U_{t}:=\{x\in\R^{N}\mid 
|u(x,t)|\le \del_0/4\}$.  
\end{thm}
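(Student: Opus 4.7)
The strategy is to define $v(x,t) := u(\psi_{\lambda}(x),t)$ for $\lambda \in [0,\lambda_0]$ and to compare $u$ and $v$ via a continuous-dependence-type argument in the spirit of Theorem~\ref{continuous dependence}, with the geometric character of the equation (A5) used crucially to handle the localization near the front.

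As a first step, I would verify by a standard viscosity change-of-variables computation (rigorous thanks to the Lipschitz regularity of $u$ from Proposition~\ref{regularity}) that $v$ is a viscosity solution of $v_t + H_\lambda(x,t,Dv,D^2v)=0$ in $\Q$, where
\[
H_\lambda(x,t,p,X) := H\bigl(\psi_\lambda(x), t, D\xi_\lambda(\psi_\lambda(x))^T p, D\xi_\lambda(\psi_\lambda(x))^T X D\xi_\lambda(\psi_\lambda(x))\bigr),
\]
the lower-order $O(\lambda|p|)$ corrections coming from $D^2\xi_\lambda$ being absorbed via (A6). Assumption (A7) is tailored exactly so that the pair $(H, H_\lambda)$ satisfies (A4) with $\kappa_1 = C_H\lambda$ and $\kappa_2 = C_H\lambda^2$, and so Theorem~\ref{continuous dependence} (in the form of Example~\ref{ex-1}/ the doubling-of-variables argument used in its proof) would give an estimate of the form
\[
\sup_{x\in\R^N}(u - v)(x,t) \leq \sup_{x\in\R^N}(u - v)(x,0) + M_1\bigl(\lambda t + \lambda\sqrt{t}\bigr).
\]

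The main obstacle is that (I2) only controls $u_0 - v(\cdot,0)$ on $U_0$, whereas a global supremum appears on the right-hand side, and outside $U_0$ the quantity $u_0(x) - u_0(\psi_\lambda(x))$ may a priori be positive. To circumvent this, I would exploit the geometric property (A5): for any smooth strictly increasing $\theta:\R\to\R$, the composition $\theta\circ u$ is still a viscosity solution of the same equation. Choosing $\theta$ to be the identity on $[-\delta_0/2, \delta_0/2]$, and using the H\"older-$\tfrac12$ time regularity~\eqref{reg-hold-u-t} of Proposition~\ref{regularity} (which ensures $U_t\subset\{|u_0|\leq \delta_0/2\}$ for small $t$), one can localize the doubling-of-variables argument of Theorem~\ref{continuous dependence}: an appropriate cut-off in $x$, built from $u_0$ (or $\theta\circ u_0$), confines the maximum of the comparison function to a neighborhood of the front, where (I2) is effective. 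Ishii's lemma combined with (A7) and (A6) then shows that, for a suitable slope $\sigma$ in $t$, no interior maximum is possible, so that the maximum must lie at $t=0$, where (I2) makes the initial comparison non-positive.

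Putting these ingredients together yields
\[
u(\psi_\lambda(x),t) - u(x,t) \geq \lambda\eta(t), \qquad x\in U_t,\ \lambda\in[0,\bar\lambda],
\]
with $\eta(t) := \eta_0 - M_1(\sqrt{t} + t)$ (or some similar continuous non-increasing function) which stays positive on $[0,\bar t\wedge T)$ provided $\bar t$ is chosen small enough; $\bar\lambda$ has to be taken small in order that both $x$ and $\psi_\lambda(x)$ remain in the region where $\theta$ acts as the identity. The principal difficulty is precisely this interplay between the geometric property, the continuous-dependence estimate, and the localization: the local nature of (I2) is exactly what forces a short-time conclusion, and it is why (A5) (to modify $u$ while preserving the equation) together with the regularity of $u$ from Proposition~\ref{regularity} are both essential to transfer from the comparison on $\theta\circ u$ to the desired inequality for $u$ itself.
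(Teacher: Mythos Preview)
Your strategy is essentially the paper's: set $v(x,t)=u(\psi_\lambda(x),t)$, use (A7) so that the pair of Hamiltonians fits (A4) with $\kappa_1\sim\lambda$, $\kappa_2\sim\lambda^2$, and invoke Theorem~\ref{continuous dependence}. You also correctly isolate the main obstacle, namely that (I2) is only local in $x$, and you correctly reach for the geometric property (A5) to resolve it.

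Where you diverge is in how that obstacle is actually resolved. You propose a strictly increasing $\theta$ together with a separate spatial cut-off, then a localized doubling-of-variables argument. The paper takes a single, cleaner step: it compares $v$ not with $u$ but with $w(x,t):=\Psi(u(x,t)+\lambda\eta_0)$, where $\Psi$ is \emph{nondecreasing} (not strictly increasing), equal to the identity on $[-\delta_0/2,\delta_0/2]$, and constant outside a slightly larger interval (Lemma~\ref{psi}). Because $\Psi$ is nondecreasing and $H$ is geometric, $w$ is still a solution of the original equation; and the flattening built into $\Psi$ makes the initial inequality $w(\cdot,0)\le v(\cdot,0)$ hold on all of $\R^N$ (this is precisely the content of Lemma~\ref{psi}). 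Theorem~\ref{continuous dependence} then applies globally, with no cut-off or localization at all, giving $(w-v)(x,t)\le M_2\lambda\sqrt{t}$ and hence \eqref{main-estimate} with $\eta(t)=\eta_0-M_2\sqrt{t}$. So your plan is on the right track, but the cut-off layer is unnecessary once you allow the composition function to be merely nondecreasing; this also avoids the delicate issue of controlling the extra terms a cut-off would introduce into the doubling-of-variables computation.
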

%%%%%%%%%%%%

%%%%%%%%%%
\begin{proof}
From (I2) and Lemma \ref{psi} in the Appendix, 
we can extend~\eqref{inegI2} in  $\R^N,$
\begin{eqnarray*} 
u_{0}(\psi_{\lam}(x))\ge \Psi(u_{0}(x)+\lam\eta_{0}) 
\quad \textrm{for any} \ x\in\R^{N}, \lam\in[0,\ol{\lam}],
\end{eqnarray*}
where $\ol{\lam}$ and $\Psi$ are introduced in Lemma \ref{psi}. 
 
We first prove 
\begin{equation}\label{key estimate lem ineq}
u(\psi_{\lam}(x),t)\ge 
\Psi(u(x,t)+\lam\eta_{0})-M_{2}\lam\sqrt{t} 
\end{equation}
for all 
$(x,t)\in\cQ$, $\lam\in[0,\ol{\lam}]$, 
some constant $M_{2}>0$, which is depends only on 
$C_H, L_H, \del_0, \eta_0, R_{T},$ $\|Du_{0}\|_{\infty}$ 
and $\|\nu\|_{\infty}$ 
(Note that $M_2$ does not depend on $\|D\nu\|_{\infty}$
contrary to $\ol{\lam}$ which depends on $\|D\nu\|_{\infty}$
through $\lam_0$ because of~\eqref{deflambda0}).

Fix $\lam\in[0,\ol{\lam}]$. 
Set 
$v(x,t):=u(\psi_{\lam}(x),t)$ and 
$w(x,t):=\Psi(u(x,t)+\lam\eta_{0})$ 
for all $(x,t)\in\cQ$. 
Since $H$ is geometric and $\Psi$ is a nondecreasing function,  
the functions $v,w$ satisfy 
\begin{numcases}
{}
v_{t}+
H\bigl(\psi_{\lam}(x),t,D\xi_{\lam}(\psi_{\lam}(x))^TDv(x,t), 
\nonumber\\
D\xi_{\lam}(\psi_{\lam}(x))^{T}D^{2}v(x,t)D\xi_{\lam}(\psi_{\lam}(x))
+D^{2}\xi_{\lam}(\psi_{\lam}(x))Dv(x,t)
\bigr) 
=0 
& in $\Q$, \nonumber\\
v(x,0)=u_0(\psi_{\lam}(x)) & in $\R^N$, \nonumber
\end{numcases}
\begin{numcases}
{}
w_{t}+H(x,t,Dw,D^{2}w)=0 
& in $\Q$, \nonumber\\
w(x,0)=\Psi(u_0(x)+\lam\eta_{0})  & in $\R^N$ \nonumber
\end{numcases}
in the viscosity sense 
(see \cite[Theorem 4.2.1]{G} 
for instance).

Let $R_{T}$ be the constant in (A8) 
and recall that $\ol{\lam}\eta_{0}\le \del_{0}/4$ 
in Lemma~\ref{psi}. 
For any $(x,t)\in(\R^N\setminus \ol{B}(0,R_{T}+\ol{\lam}\|\nu\|_{\infty})\times[0,T]$, 
$u(x,t)+\lam\eta_{0}\le -1+\del_{0}/4\le -(3\del_{0})/4$, 
which implies that 
$-1=\Psi(u(x,t)+\lam\eta_{0})\le u(\psi_{\lam}(x),t)$. 
Therefore, we only need to show that 
for any $(x,t)\in \ol{B}(0,R_{T}+\ol{\lam}\|\nu\|_{\infty})\times[0,T]$ 
inequality \eqref{key estimate lem ineq} holds. 
Note that 
\[
|D^{2}\xi_{\lam}(\psi_{\lam}(x))p|
\le
C\lam|p|. 
\]
By Assumptions (A6), (A7) and Theorem \ref{continuous dependence} 
with $\kap_{1}=\lam$ and $\kap_{2}=\lam^{2}$,
we get, for any $(x,t)\in \ol{B}(0,R_{T}+\ol{\lam}
\|\nu\|_\infty )\times[0,T]$, 
\[
(w-v)(x,t)
\le 
C(t+\sqrt{t})\lam
\le 
C(\sqrt{T}+1)\sqrt{t}\lam=:M_{2}\sqrt{t}\lam 
\]
for all $t\in[0,T]$, 
which implies \eqref{key estimate lem ineq}.

Setting 
$\ol{t}:=(\eta_{0}/M_{2})^2$ and 
$\eta(t):=\eta_{0}-M_{2}\sqrt{t}$ for all $t\in[0,\ol{t}\land T]$, 
we obtain the conclusion. 
\end{proof}
%%%%%%%%%%%%%%%%

The first important consequence is a lower-gradient
bound estimate on the front. 

%%%%%%%%%%%%%
\begin{cor}[Estimate on Lower-Bound Gradient]\label{lower bound}
We have
\[
-|Du(x,t)|\le -\frac{\eta(t)}{\|\nu\|_{\infty}} 
\quad\textrm{in} \ 
\{|u(\cdot,t)|<\frac{\del_{0}}{4}\}\times(0,\ol{t}\land T), 
\]
where $\ol{t}$ and $\eta$ are given in Theorem~{\rm \ref{key estimate}}. 
\end{cor}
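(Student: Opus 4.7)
The plan is to make rigorous the formal computation given in the introduction by interpreting the estimate $-|Du| \le -\eta(t)/\|\nu\|_{\infty}$ in the viscosity subsolution sense: whenever $\phi \in C^1(\R^N \times (0,\ol{t}\land T))$ is such that $u-\phi$ attains a local maximum at some point $(x_0,t_0)$ with $|u(x_0,t_0)|<\del_0/4$, I must show that $|D\phi(x_0,t_0)|\ge \eta(t_0)/\|\nu\|_{\infty}$. The key (and essentially only) input is Theorem \ref{key estimate}.

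First, by standard perturbation of $\phi$ I may assume the maximum is strict, and by continuity of $u(\cdot,t_0)$ there exists $r>0$ such that
\[
B(x_0,r)\subset \{x\in\R^N: |u(x,t_0)|<\del_0/4\}\subset U_{t_0},
\]
so in particular $x_0\in U_{t_0}$ and Theorem \ref{key estimate} applies at $x_0$: for every $\lam\in[0,\ol{\lam}]$,
\[
u(x_0+\lam\nu(x_0),t_0)\ge u(x_0,t_0)+\lam\eta(t_0).
\]
Since $\nu$ is bounded, for $\lam$ small enough the point $x_0+\lam\nu(x_0)$ lies in $B(x_0,r)$, so the local maximum property of $u-\phi$ at $(x_0,t_0)$ gives
\[
u(x_0+\lam\nu(x_0),t_0)-u(x_0,t_0)\le \phi(x_0+\lam\nu(x_0),t_0)-\phi(x_0,t_0).
\]

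Combining these two inequalities and performing a Taylor expansion of $\phi(\cdot,t_0)$ at $x_0$ yields
\[
\lam\eta(t_0)\le \lam\langle D\phi(x_0,t_0),\nu(x_0)\rangle + o(\lam)\quad\text{as }\lam\to 0^+.
\]
Dividing by $\lam>0$ and sending $\lam\to 0^+$ gives $\langle D\phi(x_0,t_0),\nu(x_0)\rangle\ge \eta(t_0)$. By Cauchy--Schwarz, $|D\phi(x_0,t_0)|\,|\nu(x_0)|\ge \eta(t_0)$, and since $|\nu(x_0)|\le \|\nu\|_{\infty}$,
\[
|D\phi(x_0,t_0)|\ge \frac{\eta(t_0)}{\|\nu\|_{\infty}},
\]
as required.

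There is no substantive obstacle in this proof: everything reduces to a careful viscosity-theoretic translation of the formal one-sided difference quotient in the direction $\nu(x_0)$. The only minor subtlety is to ensure that the key estimate is applicable along the whole segment $[x_0,x_0+\lam\nu(x_0)]$ used by the test-function inequality, which is handled by shrinking to the strictly smaller sublevel set $\{|u(\cdot,t)|<\del_0/4\}\subset U_t$ and invoking continuity of $u$.
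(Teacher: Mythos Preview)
Your proof is correct and follows essentially the same approach as the paper's: take a $C^1$ test function touching $u$ from above at $(x_0,t_0)$, apply the key estimate of Theorem~\ref{key estimate} at $x_0$, pass the resulting one-sided increment to $\phi$ via the maximum property, Taylor expand, divide by $\lam$ and send $\lam\to 0^+$. Your write-up is in fact slightly more careful than the paper's in making explicit the Cauchy--Schwarz step and the restriction to a small ball so that the local maximum inequality can be used; note, however, that the key estimate itself only requires $x_0\in U_{t_0}$, not the whole segment.
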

%%%%%%%%%%%%%

%%%%%%%%%%
\begin{rem}
Theorem 3.1 in \cite{BP} implies that 
we cannot expect global in time lower gradient estimates 
for solutions of \eqref{m-2} with general initial data 
like (I1), (I2), even if we assume some positiveness
assumptions on the velocity like in \cite{ACM, BL, BCLM1, BCLM2}.
\end{rem}
%%%%%%%%%%

Before giving the proof of this result,
we continue by stating another consequences of
Theorem~\ref{key estimate}. We need to introduce some notations. 

For any $t\in[0,T]$, $r\in[-1,1]$, we set 
\[
\Om_t^{r}:=\{x\in\R^N\mid u(x,t)>r\}, \quad 
\Gam_t^{r}:=\pl\Om_t^{r} 
\]
and define the cone with 
vertex $z\in\R^N$, axis $e\in\bS^{N-1}$ and 
parameters $(\rho,\theta)\in\R_{+}\times\R_{+}$ by
\begin{align*}
C^{\rho,\theta}_{e,z}
&:= \, 
\bigcup_{a\in[0,\theta]} \ol{B}(z+ae,a\,\frac{\rho}{\theta}) \\ 
{}&= \, 
\{z+ae+a\,\frac{\rho}{\theta}\xi
\mid a\in[0,\theta],\xi\in\ol{B}(0,1)\}. 
\end{align*}

The following result means that the evoluting fronts have
the interior cone property. 

%%%%%%%%%%%
\begin{cor}[Interior Cone Properties of Fronts]\label{cone}
For any $r\in[-\del_{0}/4,\del_{0}/4]$ and 
$t\in[0,\ol{t}\land T]$, 
\[
C^{\rho(t),\theta(z)}_{\frac{\nu(z)}{|\nu(z)|},z}
\subset\ol{\Om}_{t}^{r} 
\quad 
\textrm{for all} \ 
z\in\Gam_{t}^{r}, 
\]
where 
\[
\rho(t):=\frac{\eta(t)\ol{\lam}}{\|Du_{0}\|_{\infty}e^{Kt}},
\quad
\theta(z):=\ol{\lam}|\nu(z)|.  
\]
\end{cor}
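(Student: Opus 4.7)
The plan is to combine the Key Estimate from Theorem~\ref{key estimate} (propagation of the one-sided inequality along $\nu$) with the spatial Lipschitz estimate~\eqref{reg-lip-u-x} from Proposition~\ref{regularity}, both evaluated at time $t\in[0,\ol t\wedge T]$. The choice of $\rho(t)$ has been designed precisely so that the possible decrease of $u$ in the ``transverse'' directions, controlled by $\|Du_{0}\|_{\infty}e^{Kt}$, is exactly compensated by the gain $\lambda\eta(t)$ along $\nu(z)$.

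First, I would reduce the claim to a point-by-point estimate on $u$. Fix $r\in[-\del_{0}/4,\del_{0}/4]$, $t\in[0,\ol t\wedge T]$ and $z\in\Gam_{t}^{r}$; the case $\nu(z)=0$ is trivial since the cone degenerates to $\{z\}$ and $z\in\ol\Om_t^r$. Assuming $\nu(z)\neq 0$, every point of $C^{\rho(t),\theta(z)}_{\nu(z)/|\nu(z)|,z}$ can be written as
\[
y = z + a\,\frac{\nu(z)}{|\nu(z)|} + a\,\frac{\rho(t)}{\theta(z)}\,\xi,
\qquad a\in[0,\theta(z)],\ \xi\in\ol{B}(0,1).
\]
Setting $\lam := a/|\nu(z)|\in[0,\ol\lam]$ and $\tilde\mu := a\rho(t)\xi/\theta(z)$, this becomes $y=z+\lam\nu(z)+\tilde\mu$ with $|\tilde\mu|\le \lam\rho(t)/\ol\lam$.

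Next, I would apply the Key Estimate. Since $|u(z,t)|=|r|\le\del_0/4$, we have $z\in U_t$, so Theorem~\ref{key estimate} gives
\[
u(z+\lam\nu(z),t) \ge u(z,t)+\lam\eta(t) = r+\lam\eta(t).
\]
Combining this with the Lipschitz bound~\eqref{reg-lip-u-x} and the definition of $\rho(t)=\eta(t)\ol\lam/(\|Du_{0}\|_{\infty}e^{Kt})$, I obtain
\[
u(y,t) \ge u(z+\lam\nu(z),t) - \|Du_{0}\|_{\infty}e^{Kt}\,|\tilde\mu|
\ge r+\lam\eta(t) - \|Du_{0}\|_{\infty}e^{Kt}\cdot\frac{\lam\rho(t)}{\ol\lam}=r.
\]

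Finally, to get inclusion in $\ol\Om_t^r$ rather than just in $\{u(\cdot,t)\ge r\}$, I would observe that the inequality above is strict whenever $\lam>0$ and $|\tilde\mu|<\lam\rho(t)/\ol\lam$; such $y$ therefore lie in $\Om_t^r$. Every point of the closed cone is a limit of such interior points, so by continuity of $u$ it lies in $\ol{\Om_t^r}$, which completes the proof. The whole argument is essentially a direct computation once the key estimate is available, so I do not expect any real obstacle here; the only minor point requiring a little care is this last density step, needed because $\Om_t^r$ is open while the cone is closed.
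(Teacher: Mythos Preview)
Your proof is correct and follows essentially the same route as the paper's: apply the Key Estimate at $z\in\Gam_t^r$ to get $u(z+\lam\nu(z),t)\ge r+\lam\eta(t)$, then use the spatial Lipschitz bound~\eqref{reg-lip-u-x} to absorb the transverse perturbation, the definition of $\rho(t)$ having been chosen precisely so that the two contributions cancel. The paper parametrises the cone by $\lam\in[0,\ol\lam]$ directly rather than via $a=\lam|\nu(z)|$, but this is only notational. Your final density step distinguishing $\{u(\cdot,t)\ge r\}$ from $\ol\Om_t^r$ is actually more careful than the paper, which passes from $u(y,t)\ge r$ to $y\in\ol\Om_t^r$ without comment.
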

%%%%%%%%%%

When $A$ is a subset of $\R^k,$ we will write, by abuse of notation,
${\rm Per}(A)=\cH^{k-1}(\partial A)$ for the perimeter of $A$.
Notice that it does not always correspond to the usual definition
of perimeter. The two definitions coincide for instance when the 
boundary is locally
the graph of a Lipschitz function, which is often the case in our 
applications.
For further details, see
\cite[Section 5 and Remark p.183]{EG} or~\cite{hp05}. 

%%%%%%%%%%%%%
\begin{cor}[Estimate on Perimeter of Fronts]\label{estimate perimeter}
There exists a constant $M_{3}>0$ which depends 
only on the constants appearing in Theorem~{\rm \ref{key estimate}} 
such that 
\[
\Per(\Om^{r}_{t})\le M_{3}
\]
for all 
$r\in[-\del_{0}/4,\del_{0}/4]$ and 
$t\in[0,(\ol{t}\land T)/2].$ 
\end{cor}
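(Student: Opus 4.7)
The strategy is to deduce the perimeter bound from the uniform interior cone property of the fronts $\Gam_t^r$ provided by Corollary~\ref{cone}, together with the containment $\Om_t^r \subset B(0,R_T)$ inherited from~\eqref{condmoins1}. The plan has three steps: first, show that the cone parameters $\rho(t)$ and $\theta(z)$ admit uniform positive lower bounds when $t$ ranges over $[0,(\ol{t}\land T)/2]$ and $z$ over $\Gam_t^r$; second, invoke the classical fact that a bounded set with uniform interior cone property has locally Lipschitz boundary with controlled Lipschitz constant; third, cover $\Gam_t^r$ by a finite family of cylinders of controlled size to convert the local graph representation into a global perimeter bound.

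For the lower bound on $\rho(t)$, the argument is immediate: $\eta$ is continuous, non-increasing and strictly positive on $[0,\ol{t}\land T)$, so on the compact sub-interval $[0,(\ol{t}\land T)/2]$ one has $\eta(t) \geq \eta((\ol{t}\land T)/2) =: \eta_* > 0$, whence $\rho(t) \geq \eta_* \ol{\lam}/(\|Du_0\|_\infty e^{KT}) =: \rho_*$. The lower bound on $\theta(z) = \ol{\lam}|\nu(z)|$ is more delicate. First, inequality~\eqref{inegI2} forces $\nu$ to be non-vanishing on $U_0$: if $\nu(x_0)=0$ at some $x_0 \in U_0$, then~\eqref{inegI2} gives $u_0(x_0) \geq u_0(x_0) + \lam \eta_0$, i.e.\ $\lam\eta_0\le 0$, contradicting $\eta_0>0$. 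Since $U_0 \subset B(0,R_0)$ is compact and $\nu$ is continuous, $|\nu| \geq \nu_* > 0$ on $U_0$, and by continuity $|\nu| \geq \nu_*/2$ on $U_0 + B(0,\ep_0)$ for some $\ep_0 > 0$. The H\"older estimate~\eqref{reg-hold-u-t} of Proposition~\ref{regularity} yields $U_t \subset U_0 + B(0,C\sqrt{t})$ for a suitable $C$, so after possibly shrinking $\ol{t}$ we obtain $|\nu| \geq \nu_*/2$ uniformly on $U_t$, hence on $\Gam_t^r$ for $r \in [-\del_0/4,\del_0/4]$. Consequently $\theta(z) \geq \ol{\lam}\nu_*/2 =: \theta_*$.

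With both $\rho_*$ and $\theta_*$ in hand, $\Om_t^r$ enjoys the uniform interior cone property with parameters $(\rho_*,\theta_*)$, so a standard result in geometric measure theory (see~\cite{EG,hp05}) ensures that $\Gam_t^r$ is locally the graph of a Lipschitz function whose Lipschitz constant is controlled by $\theta_*/\rho_*$. Since $\Om_t^r \subset B(0,R_T)$, one may cover $\Gam_t^r$ by $O((R_T/\min\{\rho_*,\theta_*\})^{N-1})$ cylinders in each of which $\Gam_t^r$ is such a Lipschitz graph, and summing the $\cH^{N-1}$-measures in these cylinders produces a uniform bound $\Per(\Om_t^r) \leq M_3$, with $M_3$ depending only on $R_T$, $\rho_*$, $\theta_*$ and hence on the constants appearing in Theorem~\ref{key estimate}.

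The main obstacle is the uniform lower bound on $|\nu|$ along the moving front $\Gam_t^r$, which does not follow directly from~(I2) since the latter only constrains $\nu$ on $U_0$: one must combine the forced non-vanishing of $\nu$ on the compact set $U_0$, the continuity of $\nu$, and the time regularity~\eqref{reg-hold-u-t} of $u$ to prevent $U_t$ from drifting into a region where $|\nu|$ degenerates. Once this control is achieved, the perimeter bound reduces to a well-known consequence of the uniform interior cone property for bounded subsets of $\R^N$.
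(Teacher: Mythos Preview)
Your strategy --- uniform lower bounds on the cone parameters from Corollary~\ref{cone}, then a perimeter bound via the interior cone property --- is exactly the paper's. The differences are in execution.

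For the lower bound on $\theta(z)=\ol{\lam}|\nu(z)|$, your detour through the H\"older estimate and a shrinking of $\ol{t}$ is unnecessary and slightly off. The inclusion you write, $U_t\subset U_0+B(0,C\sqrt t)$, is not what~\eqref{reg-hold-u-t} gives: what follows is $U_t\subset\{|u_0|\le\del_0/4+\tilde L\sqrt t\}$, which is a sublevel set, not a tubular neighbourhood. More importantly, shrinking $\ol{t}$ would weaken the stated corollary, whose time interval is tied to the $\ol{t}$ of Theorem~\ref{key estimate}. The clean argument, which the paper uses implicitly, comes directly from~\eqref{main-estimate}: if $z\in U_t$ with $t<\ol{t}\land T$ and $\nu(z)=0$, then~\eqref{main-estimate} gives $u(z,t)\ge u(z,t)+\lam\eta(t)$, contradicting $\eta(t)>0$. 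Thus $\nu\ne 0$ on the compact set $\bigcup_{t\le(\ol t\land T)/2}U_t\subset\ol B(0,R_T)$, and continuity of $\nu$ yields the uniform positive lower bound $\ol\theta>0$ with no modification of $\ol{t}$.

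For the final step, the paper invokes Lemma~\ref{perimeter} (Theorem~5.8 of~\cite{BCLM2}) directly: the interior cone property with parameters $(\ol\rho,\ol\theta)$ gives $\cH^{N-1}(\Gam_t^r)\le\Lam\,\cL^N(\ol\Om_t^r)\le\Lam\,\cL^N(B(0,R_T))$ in one line. Your route through locally Lipschitz graphs and a cylinder covering is correct and yields the same dependence of $M_3$ on the constants, but it re-proves by hand what Lemma~\ref{perimeter} packages. Either way the argument closes.
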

%%%%%%%%%%%%

We turn to the proofs.

%%%%%%%%%%%%%%%%%%%%%%%%%%%%%%%%
\begin{proof}[Proof of Corollary {\rm \ref{lower bound}}]
Take any function $\phi\in C^{1}(\Q)$ satisfying 
$(u-\phi)(x_{0},t_{0})=0$ and 
$(u-\phi)(x,t)\le 0$ for all $(x,t)\in\Q$ 
for some $(x_{0},t_{0})\in 
\{|u(\cdot,t)|<\del_{0}/4\}\times(0,\ol{t}\land T)$,
where $\ol{t}$ is given by Theorem~\ref{key estimate}. 
By Theorem \ref{key estimate} and mean-value theorem, 
we have 
\begin{align*}
\lam\eta(t_{0})
\le \ &
u(\psi_{\lam}(x_{0}),t_{0})-u(x_{0},t_{0})\\
{}\le \ &
\phi(\psi_{\lam}(x_{0}),t_{0})-\phi(x_{0},t_{0})\\
{}= \ &
\lam \langle D\phi(x_{0},t_{0}),\nu(x)\rangle+o(\lam)\\
{}\le \ &
\lam|D\phi(x_{0},t_{0})|\|\nu\|_{\infty}+o(\lam\|\nu\|_{\infty}).
\end{align*}
Dividing by $\lam\|\nu\|_{\infty}>0$ in the above and 
letting $\lam\in(0,\ol{\lam}]$ go to $0$, 
we obtain the conclusion. 
\end{proof}

%%%%%%%%%%%%%%%%%%%%%%%%%%%%%%%%%%%%%%
\begin{proof}[Proof of Corollary {\rm \ref{cone}}]
Fix $r\in[-\del_{0}/4,\del_{0}/4]$, 
$t\in[0,\ol{t}\land T]$ and $z\in\Gam_{t}^{r}$. 
By Theorem \ref{key estimate}, we have
\[
u(z+\lam\nu(z),t)\ge r+\lam\eta(t) 
\quad\textrm{for all} \ 
\lam\in[0,\ol{\lam}].  
\]
Set 
$r_{\lam}(t):=(\lam\eta(t))/(\|Du_{0}\|_{\infty}e^{Kt})$. 
For any $\xi\in\ol{B}(0,1)$, we have 
\begin{align*}
u(z+\lam\nu(z)+r_{\lam}(t)\xi,t)
&\ge\,
u(z+\lam\nu(z),t)-\|Du_{0}\|_{\infty}e^{Kt}r_{\lam}(t)\\
{}&\ge
r+\lam\eta(t)-\|Du_{0}\|_{\infty}e^{Kt}r_{\lam}(t)
\ge
r, 
\end{align*}
which implies that 
\[
\ol{B}(z+\lam\nu(z),r_{\lam}(t))\subset
\ol{\Om}_{t}^{r}
\]
for any $\lam\in[0,\ol{\lam}]$. 
Therefore, we have 
\[
C^{\rho(t),\theta(z)}_{\frac{\nu(z)}{|\nu(z)|},z}
=
\bigcup_{\lam\in[0,\ol{\lam}]}
\ol{B}(z+\lam\nu(z), \lam\frac{\rho(t)}{\ol{\lam}})
=
\bigcup_{a\in[0,\theta(z)]}
\ol{B}(z+a\frac{\nu(z)}{|\nu(z)|}, a\frac{\rho(t)}{\theta(z)})
\subset
\ol{\Om}_{t}^{r}. 
\]
\end{proof}
%%%%

Before doing 
the proof of Corollary~\ref{estimate perimeter}, 
we recall the following lemma. 

%%%%%%%%%%%%%
\begin{lem}[{\cite[Theorem 5.8]{BCLM2}}]\label{perimeter}
Let $K$ be a compact subset of $\R^N$ having the interior cone 
property of parameters $\rho$ and $\theta$. 
Then there exists a positive constant 
$\Lam=\Lam(N,\rho,\theta/\rho)$ 
such that for all $R>0$, 
\[
\cH^{N-1}(\pl K\cap\ol{B}(0,R))\le\Lam\cL^{N}(K\cap\ol{B}(0,R+\rho/4)). 
\]
\end{lem}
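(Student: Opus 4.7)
The plan is to exploit the interior cone property as a two-way bridge between $\pl K$ and the interior of $K$: each boundary point carries an interior cone of controlled volume, and the uniformity of these cones simultaneously forces $\pl K$ to be locally a Lipschitz graph. Combining the two ingredients through a Vitali-type covering argument yields the claimed estimate.

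First, I would choose a maximal $(\rho/8)$-separated family $\{z_i\}_{i=1}^I\subset \pl K\cap\ol{B}(0,R)$, so that the open balls $B(z_i,\rho/16)$ are pairwise disjoint while $\bigcup_i \ol{B}(z_i,\rho/8)\supset \pl K\cap\ol{B}(0,R)$. Inside each interior cone $C^{\rho,\theta}_{e(z_i),z_i}\subset K$ I would extract the truncated sub-cone $\tilde C_i$ obtained by restricting the axial parameter to $a\in[0,s]$, with $s\sim\rho\land\theta$ chosen small enough that $\tilde C_i\subset \ol{B}(z_i,\rho/16)\cap K$. A direct volume computation gives $\cL^N(\tilde C_i)\ge c(N,\theta/\rho)\,\rho^N$. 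Since the $\tilde C_i$ are pairwise disjoint (they live in the disjoint balls $B(z_i,\rho/16)$) and all contained in $K\cap\ol{B}(0,R+\rho/4)$, summing the volumes yields
\[
I\le \frac{\cL^N(K\cap\ol{B}(0,R+\rho/4))}{c(N,\theta/\rho)\,\rho^N}.
\]

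Second, I would bound the local boundary measure $\cH^{N-1}(\pl K\cap\ol{B}(z_i,\rho/8))$. The uniform interior cone property classically implies that, near each $z_i$, the set $\pl K$ coincides with the graph of a Lipschitz function on the hyperplane orthogonal to $e(z_i)$, with Lipschitz constant $L=L(\theta/\rho)$: otherwise some boundary point $z'$ close to $z_i$ would lie in the interior of the cone $C^{\rho,\theta}_{e(z_i),z_i}$, hence in $\inter K$, contradicting $z'\in\pl K$. The area formula then gives
\[
\cH^{N-1}(\pl K\cap\ol{B}(z_i,\rho/8))\le C(N,\theta/\rho)\,\rho^{N-1},
\]
and plugging in the bound on $I$ produces
\[
\cH^{N-1}(\pl K\cap\ol{B}(0,R))\le \Lam(N,\rho,\theta/\rho)\,\cL^N(K\cap\ol{B}(0,R+\rho/4)),
\]
with $\Lam$ of the form $C(N,\theta/\rho)/\rho$, which is the desired inequality.

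The principal obstacle I anticipate is the quantitative derivation of the local Lipschitz-graph structure from the cone property alone: one must apply the cone condition simultaneously at two nearby boundary points $z_i$ and $z'$, and compare the resulting interior cones to rule out any direction of $z'-z_i$ lying in a double-cone of aperture controlled by $\theta/\rho$ around $\pm e(z_i)$. A secondary technical point is to track the dependence on $\theta/\rho$ of the volume constant $c(N,\theta/\rho)$, which degenerates both as the cone becomes very narrow ($\theta/\rho\to\infty$) and as it becomes very short ($\theta/\rho\to 0$); choosing $s\sim\rho\land\theta$ is precisely what balances these two regimes.
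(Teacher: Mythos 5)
The paper does not prove this lemma; it is quoted verbatim from \cite[Theorem 5.8]{BCLM2}, so there is no in-paper proof to compare against. That said, your outline (a maximal $\rho/8$-separated net, a volume lower bound from disjoint truncated interior cones, a local surface-measure upper bound, then sum) is the right shape, and Steps 1--3 are sound. The genuine gap is in Step 4. The interior cone property gives, at each $z\in\pl K$, a cone $C^{\rho,\theta}_{e(z),z}\subset K$, but the axis $e(z)$ can jump discontinuously from one boundary point to a nearby one, so the boundary need not be a Lipschitz graph over $e(z_i)^\perp$ near $z_i$. Concretely, take $K\subset\R^2$ to be the union of two closed unit disks tangent at a point $p$: this $K$ has the interior cone property with fixed $(\rho,\theta)$, yet near $p$ the boundary is a cusp formed by two mutually tangent arcs, which is not a Lipschitz graph in any direction. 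Your one-line justification only excludes a nearby boundary point $z'$ lying inside $C^{\rho,\theta}_{e(z_i),z_i}$ (the ``interior side''); it says nothing about $z'$ lying in the reflected cone about $-e(z_i)$, and to rule that out you would need the cone at $z'$ to also point roughly along $e(z_i)$ --- which is exactly what fails in the example.

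The repair (which is essentially the content of the proof in \cite{BCLM2}) is to partition the boundary by cone direction rather than work pointwise: cover $\bS^{N-1}$ by finitely many caps $U_1,\dots,U_M$ of diameter small compared with the cone half-aperture $\arctan(\rho/\theta)$, and set $S_k:=\{z\in\pl K : e(z)\in U_k\}$. If $z,z'\in S_k$ are close, both of their interior cones contain a common sub-cone of fixed aperture with axis $e_k$ (the cap center), so $z'-z$ cannot lie in the double cone about $\pm e_k$; hence $S_k\cap\ol{B}(z_i,\rho/8)$ is a Lipschitz graph over $e_k^\perp$, with Lipschitz constant, and therefore local $\cH^{N-1}$ bound of order $\rho^{N-1}$, depending only on $N$ and $\theta/\rho$. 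Summing over $k$ (with $M=M(N,\theta/\rho)$) recovers the local bound of Step 4, and the rest of your argument then goes through. You flag this issue as ``the principal obstacle'' at the end, which is the right diagnosis, but the argument you actually write does not carry out the direction-partitioning and is therefore incomplete as stated.
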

%%%%%%%%%%%%%%%

\begin{proof}[Proof of Corollary {\rm \ref{estimate perimeter}}]
Set $t^{\ast}:=(\ol{t}\land T)/2$. 
Let $\rho(t), \theta(z)$ be the functions in 
Corollary~\ref{cone} and 
set $\ol{\rho}:=\rho(t^{\ast})$ and 
$\ol{\theta}:=\min_{z\in\pl\Om_{t},t\in[0,t^{\ast}]}
\theta(z)$.
By Theorem~\ref{key estimate} and Corollary~\ref{cone}, 
we see that $\ol{\rho}, \ol{\theta}>0$ and we have,
\[
C^{\ol{\rho},\ol{\lam}}_{\frac{\nu(z)}{|\nu(z)|},z}
\subset
C^{\rho (t^{\ast}),\theta(z)}_{\frac{\nu(z)}{|\nu(z)|},z}
\subset
\ol{\Om}^{r}_{t} 
\quad\textrm{for all} \ 
z\in\Gam^{r}_{t}, 
r\in[-\frac{\del_{0}}{4},\frac{\del_{0}}{4}]. 
\]

Due to Lemma \ref{perimeter}, 
there exists $\Lam=\Lam(N,\ol{\rho},
\ol{\theta}/\ol{\lam}
)>0$ 
such that 
\[
\cH^{N-1}(\Gam^{r}_{t})
\le 
\Lam\cL^{N}(\ol{\Om}^{r}_{t})
\le 
\Lam \cL^{N}(B(0,R_{T}))=:M_3 
\]
for all $t\in[0,t^{\ast}]$, 
$r\in[-\del_{0}/4,\del_{0}/4]$. 
\end{proof}
%%%%%%

We end this section with an application.

%%%%%%%%% EXAMPLE 2 %%%%%%%%%%%%%%%%
\begin{ex}\label{ex-2}
{\rm 
We consider the function 
\begin{equation}\label{example equation}
H(x,t,p,X)=
\inf_{\al\in\cA}\sup_{\beta\in\cB}
\bigl\{-c^{\al,\beta}(x,t,p)|p|
-\tr \bigl(\sig^{\al,\beta}(x,t,p)(\sig^{\al,\beta})^T(x,t,p)X\bigr)\bigr\}, 
\end{equation}
where 
the functions 
$c^{\al,\beta}$ and $\sig^{\al,\beta}$ satisfy 
\eqref{c-1} for all $\al\in\cA$, $\beta\in\cB$, 
respectively. 
We add the following assumptions on 
$c^{\al,\beta}, \sig^{\al,\beta}${\rm :} 
\begin{gather}
c(x,t,\mu p)=c(x,t,p), \quad
|c(x,t,p)-c(x,t,q)|\le C_{c}|p-q|, \nonumber \\
\sig(x,t,\mu p)=\sig(x,t,p), \quad 
\sig^{T}(x,t,p)p=0, \quad 
|\sig(x,t,p)-\sig(x,t,q)|\le \frac{C_{\sig}|p-q|}{|p|+|q|}
\label{siggeom}
\end{gather}
for all $\mu>0$, $(x,t)\in\cQ$, 
$p,q\in(\R^N\setminus\{0\})$ 
and some $C_{c}, C_{\sig}>0$. 
These assumptions are related to (A5). 
A typical example is 
$\sig^{\al,\beta}(x,t,p)=I-p\otimes p/|p|^2$ 
and then the second-order term is the so-called mean curvature term.
We claim that 
the function $H$ satisfies {\rm (A1)--(A8)}.

It is easy to check that 
the function $H$ satisfies 
{\rm (A1)--(A5)}. %{\rm (A5old)}. 
We check that the function $H$ satisfies (A7). 
Note that, 
by~\eqref{deflambda0},~\eqref{defxilam}, we have
\begin{gather*}
|D\xi_{\lam}(\psi_{\lam}(x))|
\le
\frac{1}{1-\lam|D\nu(x)|}
\le
\frac{1}{1-\lam_0 \|D\nu\|_\infty}
<+\infty, \\
|I-D\xi_{\lam}(\psi_{\lam}(x))|
\le
\frac{\lam \|D\nu\|_\infty }{1-\lam_0 \|D\nu\|_\infty},
\end{gather*}
for $\lam_{0}\in(0,1)$ small enough and any $x\in\R^{N}$. 
By abuse of notations, we write $c, \sig$ instead of 
$c^{\al,\beta}, \sig^{\al,\beta}$ for any $\al\in\cA, \beta\in\cB$. 
We compute 
\begin{align*}
{}&
|c(x,t,p)-c(\psi_{\lam}(x),t,D\xi_{\lam}(\psi_{\lam}(x))^{T}p)|\\
\le&\, 
|c(x,t,p)-c(\psi_{\lam}(x),t,p)|
+|c(\psi_{\lam}(x),t,p)-c(\psi_{\lam}(x),t,D\xi_{\lam}(\psi_{\lam}(x))^{T}p)|\\
\le&\, 
L|x-\psi_{\lam}(x)|
+
C_{c}|(I-D\xi_{\lam}(\psi_{\lam}(x))^{T})p|\\
\le&\, 
\tilde{C}\lam 
\end{align*}
and 
\begin{align*}
{}&
|\sig(x,t,p)-
D\xi_{\lam}(\psi_{\lam}(x))\sig(\psi_{\lam}(x),t,
D\xi_{\lam}(\psi_{\lam}(x))^{T}p)|\\ 
\le&\, 
|\sig(x,t,p)-D\xi_{\lam}(\psi_{\lam}(x))\sig(x,t,p)|
+
|D\xi_{\lam}(\psi_{\lam}(x))(\sig(x,t,p)-\sig(\psi_{\lam}(x),t,p))|\\
&\, 
+
|D\xi_{\lam}(\psi_{\lam}(x))(\sig(\psi_{\lam}(x),t,p)-
\sig(\psi_{\lam}(x),t,D\xi_{\lam}(\psi_{\lam}(x))^{T}p))|\\
\le&\, 
M|I-D\xi_{\lam}(\psi_{\lam}(x))|
+L|D\xi_{\lam}(\psi_{\lam}(x))||x-\psi_{\lam}(x)|\\
&\, 
+C_{\sig}\frac{|D\xi_{\lam}(\psi_{\lam}(x))||(I-D\xi_{\lam}(\psi_{\lam}(x))^{T})p|}
{|p|+|D\xi_{\lam}(\psi_{\lam}(x))^{T}p|}\\
\le&\, 
\tilde{C}\lam 
\end{align*}
for some 
$\tilde{C}=\tilde{C}(L,M,C_{c},C_{\sig})>0$ 
and any $x\in\R^{N}, p\in\R^{N}\setminus\{0\}$ 
and $\lam\in[0,\lam_{0}]$.

By using the same computations as Example {\rm \ref{ex-1}}, 
we have 
\begin{align*}
{}&\, 
H\bigl(\psi_{\lam}(y),t,D\xi_{\lam}(\psi_{\lam}(y))p, 
D\xi_{\lam}(\psi_{\lam}(y))^{T}YD\xi_{\lam}(\psi_{\lam}(y))\bigr) 
-H(x,t,p,X)\\
\le&\, 
C\bigl(\frac{|x-y|^{4}}{\ep^{4}}+ 
\lam+\frac{\lam^{2}|x-y|^{2}}{\ep^{4}}+\rho\|A^{2}\|\bigr) 
\end{align*}
for some $C=C(L,M,C_{c},C_{\sig})>0$ 
and any $\rho,\ep\in(0,1)$, 
$x,y\in \ol{B}(0,R)$, 
$t\in[0,T]$, 
$p=\ep^{-4}|x-y|^{2}(x-y)$ with $x\not=y$, 
$X,Y\in\cS^{N}$ satisfying \eqref{matrix ineq} 
and 
$A\in\cS^{N}$ given by \eqref{matrix-A}, 
which implies that $H$ satisfies (A7).

We finally check that $H$ satisfies 
{\rm (A8)}. 
At first, the constant function
$-1$ is obviously a subsolution of \eqref{e-2}. 
We set $R(t):=M_{c}t+R_{0}+\sqrt{2},$ 
with $M_{c}\geq \|c\|_\infty$ and 
define the function $f:\cQ\to\R$ by 
\[
f(x,t):=
\phi((R(t)-|x|)^2-1), 
\]
where $\phi(r):=r\vee (-1)$. 
We prove that $f$ is a viscosity supersolution 
of \eqref{e-2}. 
It is easily seen that $f(x,0)\ge u_{0}(x)$ on $\R^N$. 
Indeed, 
for all $x\in B(0,R_{0})$, we have 
$f(x,0)\ge 
(R_{0}+\sqrt{2}-|x|)^{2}-1\ge 1\ge u_{0}(x)$ 
and, for all $x\in \R^N\setminus B(0,R_{0})$, 
$u_0(x)=-1\le f(x,0)$ (see (I1)).
We have 
$f_t(x,t)=2M_{c}(R(t)-|x|)$, 
$Df(x,t)=2(|x|-R(t))\frac{x}{|x|}$ and 
$D^{2}f(x,t)=
2\bigl(I-\frac{R(t)}{|x|}(I-\frac{x\otimes x}{|x|^2})\bigr)$ 
for any $t\in(0,T)$ and 
$x\in \ol{B}(0,R(t))\setminus\{0\}$. 
Note that $D^{-}f(0,t)=\emptyset$ for all $t\in[0,T]$. 
Set $b^{\al,\beta}(x,t,p)=\sig^{\al,\beta}(x,t,p)(\sig^{\al,\beta})^T(x,t,p)$. 
We calculate that 
\begin{align*}
{}&
f_{t}
+\inf_{\al\in\cA}\sup_{\beta\in\cB}
\{-c^{\al,\beta}(x,t,Df)|Df|-\tr(b^{\al,\beta}(x,t,Df)D^{2}f)\}\\
=&
\inf_{\al\in\cA}\sup_{\beta\in\cB}\Bigl\{
2(M_c-c^{\al,\beta}(x,t,Df))(R(t)-|x|)\\
{}&-2\tr\Bigl(b^{\al,\beta}(x,t,Df)
\bigl(I-\frac{R(t)}{|x|}(I-\frac{x\otimes x}{|x|^2})\bigr)\Bigr)\Bigr\}. 
\end{align*}
Set $e_{1}:=x/|x|$ and 
take $e_{i}\in\R^N$ for $i=2,\ldots,N$ so that 
$\{e_{i}\}_{i=1,\ldots,N}$ is an orthonormal basis. 
Then we have 
$(I-\frac{x\otimes x}{|x|^2})e_{1}=0$ and 
$(I-\frac{x\otimes x}{|x|^2})e_{i}=e_{i}$ 
for $i=2,\ldots,N$. 
Therefore, 
\begin{align}
{} & 
\tr\Bigl(b^{\al,\beta}(x,t,Df)
\bigl(I-\frac{R(t)}{|x|}(I-\frac{x\otimes x}{|x|^2})\bigr)\Bigr)\nonumber\\
= \ & 
\sum_{i=1}^{N}
\big\langle b^{\al,\beta}(x,t,Df)
\bigl(I-\frac{R(t)}{|x|}(I-\frac{x\otimes x}{|x|^2})\bigr)e_{i},e_{i}
\big\rangle)\nonumber\\
= \ & 
\sum_{i=1}^{N}
\big\langle b^{\al,\beta}(x,t,Df)e_{i},e_{i}\big\rangle 
-\frac{R(t)}{|x|}
\big\langle b^{\al,\beta}(x,t,Df)\bigl(I-\frac{x\otimes x}{|x|^2}\bigr)
e_{i},e_{i}\big\rangle)\nonumber \\
= \ & 
\sum_{i=2}^{N}
\bigl(1-\frac{R(t)}{|x|}\bigr)\langle 
(\sig^{\al,\beta})^T(x,t,Df)e_{i},(\sig^{\al,\beta})^T(x,t,Df)e_{i}\rangle)
\nonumber\\
\le \ & 
0, \label{ineg564}
\end{align}
since $|x|<R(t)$ and
$(\sig^{\al,\beta})^T(x,t,Df)e_1=(\sig^{\al,\beta})^T(x,t,-x)x=0$
by \eqref{siggeom}. Moreover, $D^{-}u(0,t)=\emptyset$ and $-1$ is
obviously a supersolution on $\R^N\setminus B(0,R(t))$. 
%To conclude that $f$ is a viscosity supersolution of \eqref{e-2},
%it remains to check the viscosity inequality for 
%$(x,t)$ such that $|x|=R(t).$ But, since the map $(y,s)\mapsto (|y|-R(s))^2-1$
%achieves its minimum $-1$ at $(x,t),$ if $(a,p,X)\in J^{2,-}f(x,t),$
%then $(a,p,X)\in J^{2,-}((\cdot -R(\cdot))^2-1)(x,t).$ Then, noting that
%\eqref{ineg564} holds also for $|x|=R(t)$ ends the proof.
Setting $R_{T}=R(T)$, 
we see that {\rm (A8)} is satisfied
in view of the comparison theorem for viscosity solutions 
of \eqref{e-2}.  
}
\end{ex}

%%%%%%%%%%%%%%%%%%%%%%%%%%%%%%%%%%%%%%%%%%%%%%%%%%%%%%%%%
%%%%%%%%%%%%%%%%%%%%%%%%%%%%%%%%%%%%%%%%%%%%%%%%%%%%%%%%%
%%%%%%%%%%%%%%%%%%%%%%%%%%%%%%%%%%%%%%%%%%%%%%%%%%%%%%%%%
%%%%%%%%%%%%%%%%%%%%%%%%%%%%%%%%%%%%%%%%%%%%%%%%%%%%%%%%%

\section{Uniqueness of Solutions of Nonlocal Equations}

In this section, 
we consider the initial value problem of 
the nonlocal and non-monotone 
geometric equations which is derived from \eqref{general law}, 
through the \textit{level set approach} (see \cite{CGG,ES,G}), 
\begin{equation}\label{e-1}
\left\{
\begin{aligned}
&
u_t+
H[\mathbf{1}_{\{u\ge0\}}](x,t,Du,D^{2}u)=0 
&& \textrm{in} \ \Q, \\
&u(\cdot,0)=u_0
&& \textrm{in} \ \R^{N}.  
\end{aligned}
\right.
\end{equation}
For any function 
$\chi\in\Li(\cQ,[0,1])$,  
$H[\chi]$  
denotes a real-valued function of 
$(x,t,p,X)\in\R^N\times[0,T]\times(\R^N\setminus\{0\})\times\cS^{N}$. 
For almost any $t\in [0, T]$, 
$(x,p,X)\mapsto H[\chi](x,t,p,X)$ 
are continuous functions 
on $\R^N\times(\R^N\setminus{\{0\}})\times\cS^{N}$ 
with a possible singularity 
at $p=0$. 
For all $(x,p,X)\in\R^{N}\times(\R^N\setminus\{0\})\times\cS^{N}$, 
$t\mapsto H[\chi](x,t,p,X)$ 
are measurable functions. 
For any $\chi\in\Li(\cQ,[0,1])$, 
$H[\chi]=H$ satisfies (A2), (A3), (A5).

Furthermore, we make the following assumptions 
(H1)--(H5-(i)) or (H5-(ii)) and (I1), (I2) on $u_{0}$ 
throughout this section.

\begin{itemize}
\item[(H1)]
For any $\chi\in\Li(\cQ,[0,1])$, 
equation \eqref{e-1} has a bounded uniformly continuous 
$L^{1}$-viscosity solution $u[\chi]$. 
Moreover, there exist constants $C,R_{T}>0$ 
independent of $\chi\in\Li(\cQ,[0,1])$ 
such that $|u(x,t)|\le C$ for all $(x,t)\in\cQ$ 
and $u(x,t)=-1$ for all 
$x\in (\R^N\setminus B(0,R_{T}))\times[0,T]$.

\item[(H2)]
For any $\tau\in[0,T]$ and $\chi\in C([0,\tau];L^{1}(\R^N))$ 
such that 
$\supp\chi(\cdot,t)$ is compact for any $t\in[0,\tau]$,  
$H[\chi]\in C(\R^N\times[0,\tau]
\times(\R^{N}\setminus\{0\})\times\cS^{N})$.

\item[(H3)]
The functions 
$H[\chi]$ satisfy 
%(A5old) and 
(A6) with $H=H[\chi]$ 
uniformly for any $\chi\in\Li(\cQ,[0,1])$.

\item[(H4)]
The functions 
$H[\chi]$ satisfy 
(A7) with $H=H[\chi]$ 
uniformly for any $\chi\in\Li(\cQ,[0,1])$. 

\item[(H5-(i))]
For any $R>0$, there exists $C_{H}>0$ such that 
\begin{align}
{}&
|H[\chi_{1}](x,t,p,X) 
-H[\chi_{2}](y,t,p,Y)| \nonumber\\
\le \, & 
C_{H}\bigl(\frac{|x-y|^{4}}{\ep^{4}}+ 
\kap_{\chi_1, \chi_2}(x,t)
+\frac{\kap_{\chi_1, \chi_2}^{2}(x,t)|x-y|^{2}}{\ep^{4}}
+\rho\|A^{2}\| 
\bigr) 
 \label{assump main ineq}
\end{align}
for any $\chi_{1},\chi_{2}\in\Li(\cQ,[0,1])$ 
and 
for any $\rho,\ep\in(0,1)$, 
$x,y\in \ol{B}(0,R)$, 
$t\in[0,T]$, 
$p=\ep^{-4}|x-y|^{2}(x-y)$ and 
$X,Y\in\cS^{N}$ satisfying 
\eqref{matrix ineq} and $A\in\cS^{N}$ given by \eqref{matrix-A}, 
where 
\[
\kap_{\chi_1, \chi_2}(x,t)
:=\int_{\R^N}|\chi_{1}(y,t)-\chi_{2}(y,t)|\,dy. 
\]

\item[(H5-(ii))]
One has inequality \eqref{assump main ineq} 
by replacing $\kap_{\chi_1,\chi_2}$ by 
\[
\ol{\kap}_{\chi_1, \chi_2}(x,t)
:=\int_0^{t}\int_{\R^N}G(x-y,t-s)|\chi_{1}(y,s)-\chi_{2}(y,s)|\,dyds, 
\]
where $G(x,t)$ is the Green function defined by 
\[
G(x,t):=\frac{1}{(4\pi t)^{N/2}}e^{-\frac{|x|^2}{4t}}. 
\]
\item[(H6)] For any $\chi\in\Li(\cQ,[0,1])$, if $\chi_{n}(x,t):=n\int_t^{t+1/n}\chi(x,s)ds$, then the nonlinearity
$$
H_{n}(x,t,p,X):=H[\chi_{n}](x,t,p,X)
$$
satisfies (A1)-(A4) and $u[\chi_{n}] \to u[\chi]$ uniformly in $\R^N \times [0,T]$ as $n \to + \infty$.
\end{itemize}

Assumptions (H1)--(H4) are modifications
of (A1)--(A7) in order to be able to deal with
the nonlocal equation \eqref{e-1}. 
While (H5-(i)) and (H5-(ii)) are specially designed to 
encompass dislocation type equations or FitzHugh-Nagumo 
type systems.  
Finally (H6) is the assumption which allows to use 
Theorem~\ref{continuous dependence} through an approximation argument 
(cf. Remark~\ref{appr-arg}). 
Further detailed examples are given in Section~\ref{sec:applic}.

We use the following definition of weak solutions introduced 
in \cite{BCLM1} which is inspired by \cite{I, N, Bo1,Bo2}. 
%%%%%%%
\begin{defn}[Definition of Weak Solutions]\label{def weak sol}
Let $u:\cQ\to\R$ be a continuous function. 
We say that $u$ is a weak solution of \eqref{e-1} 
if there exists 
$\chi\in\Li(\cQ,[0,1])$ such that 
\begin{itemize}
\item[{\rm (1)}]
$u$ is an $L^{1}$-viscosity solution 
of 
\begin{equation}\label{e-3}
\left\{
\begin{aligned}
&
u_t+H[\chi](x,t,Du,D^2u)=0 
& \textrm{in} \ \Q, \\
&u(\cdot,0)=u_0
& \textrm{in} \ \R^N, 
\end{aligned}
\right.
\end{equation}

\item[{\rm (2)}]
for almost every $t\in(0,T)$, 
\[
\mathbf{1}_{\{u(\cdot,t)>0\}}(x)\le 
\chi(x,t)\le 
\mathbf{1}_{\{u(\cdot,t)\ge0\}}(x) \ 
\textrm{for } a.e.  \ x\in\R^N. 
\]
\end{itemize}
Moreover, we say that $u$ is a classical solution of 
\eqref{e-1} if in addition, 
for almost all $t\in[0,T]$, 
\[
\mathbf{1}_{\{u(\cdot,t)>0\}}(x)
=
\mathbf{1}_{\{u(\cdot,t)\ge0\}}(x) \ \ \textrm{for a.e.} 
\ x\in\R^N. 
\]
\end{defn}
%%%%%%%%%

%%%%%%%%%%
\begin{prop}[Weak Solutions are Classical in a Short Time]\label{weak is classical}
If there exists a weak solution 
$u\in C(\cQ)$ of \eqref{e-1}, 
then $u$ is classical in $\R^{N}\times(0,\ol{t}\land T)$ 
for some $\ol{t}>0$ which depends on 
$C_H,$ $L_H,$ $ \del_0,$ 
$\eta_0,$ $R_{T},$ $\|Du_{0}\|_{\infty},$ 
$\|\nu\|_{\infty},$ $\|D\nu\|_{\infty}$.  
\end{prop}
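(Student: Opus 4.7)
The plan is to reduce the problem to the local setting of Section~3: we exploit the fact that a weak solution satisfies a local equation with a frozen $\chi,$ apply the Key Estimate to obtain a gradient lower bound on a neighborhood of the zero level set, and deduce via Stampacchia's theorem that $\{u(\cdot,t)=0\}$ has zero Lebesgue measure for every short time $t,$ which is precisely the condition needed for $u$ to be classical.

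First, because $u$ is a weak solution there exists $\chi\in L^\infty(\cQ,[0,1])$ such that $u=u[\chi]$ solves \eqref{e-3} in the $L^1$-viscosity sense. Since the mere measurability of $\chi$ in $t$ may prevent $H[\chi]$ from satisfying (A1), (A4), I would first regularize using (H6): set $\chi_n(x,t)=n\int_t^{t+1/n}\chi(x,s)\,ds$ and $u_n=u[\chi_n]$, so that $H_n=H[\chi_n]$ satisfies (A1)--(A4) and $u_n\to u$ uniformly on $\cQ$. The remaining assumptions (A5)--(A8) hold for $H_n$ uniformly in $n$ thanks to (H1), (H3), (H4) and the fact that each $H[\chi]$ is geometric. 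Consequently, Theorem~\ref{key estimate} applied to each $u_n$ provides constants $\ol{t},\ol{\lam}$ and a positive non-increasing function $\eta,$ all independent of $n,$ such that
$$u_n(x+\lam\nu(x),t)\ge u_n(x,t)+\lam\eta(t) \quad \text{for } x\in\{|u_n(\cdot,t)|\le \del_0/4\},\ \lam\in[0,\ol{\lam}].$$
Fixing $t\in(0,\ol{t}\land T),$ uniform convergence $u_n\to u$ places any $x$ in the open set $V_t:=\{|u(\cdot,t)|<\del_0/8\}$ inside the domain of validity of the estimate for $n$ large, and letting $n\to\infty$ transfers the Key Estimate to $u$:
$$u(x+\lam\nu(x),t)\ge u(x,t)+\lam\eta(t) \quad \text{for } x\in V_t,\ \lam\in[0,\ol{\lam}].$$

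By Proposition~\ref{regularity}, $u(\cdot,t)$ is Lipschitz and hence differentiable almost everywhere (Rademacher). At any differentiability point $x\in V_t,$ dividing the above inequality by $\lam$ and letting $\lam\to 0^+$ gives $\langle\nabla u(x,t),\nu(x)\rangle\ge\eta(t)>0,$ so $\nabla u(x,t)\neq 0$ for almost every $x\in V_t$ (note that necessarily $\nu(x)\neq 0$ on $V_t,$ otherwise the inequality fails). On the other hand, Stampacchia's theorem asserts $\nabla u(\cdot,t)=0$ almost everywhere on the level set $\{u(\cdot,t)=0\}.$ Since $\{u(\cdot,t)=0\}\subset V_t,$ these two statements can only be reconciled if $\cL^N(\{u(\cdot,t)=0\})=0.$ This holds for every $t\in(0,\ol{t}\land T),$ so $\mathbf{1}_{\{u(\cdot,t)>0\}}=\mathbf{1}_{\{u(\cdot,t)\ge 0\}}$ a.e. on $\R^N$ for a.e. such $t,$ which is exactly the condition making $u$ classical on $\R^N\times(0,\ol{t}\land T).$

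The main obstacle is the first step, namely ensuring that the structure constants governing Theorem~\ref{key estimate} are genuinely uniform in $n$ along the regularization $\chi_n,$ so that $\ol{t}$ does not shrink to zero in the limit; this forces a careful reading of how (H1), (H3), (H4) interact with (H6). Once this uniformity is secured, the Rademacher--Stampacchia reconciliation that converts the pointwise monotonicity of the Key Estimate into the desired measure-zero statement is the decisive geometric-measure-theoretic step.
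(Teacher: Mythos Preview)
Your proposal is correct and follows essentially the same approach as the paper: regularize $\chi$ via (H6), apply the Section~3 estimates to $u_n$ with constants uniform in $n$ (this uniformity is indeed guaranteed by (H1), (H3), (H4)), pass to the limit, and conclude that $\cL^N(\{u(\cdot,t)=0\})=0$ via a Stampacchia-type argument. The only minor variation is that the paper applies Corollary~\ref{lower bound} (the viscosity-sense gradient bound) to $u_n$ and passes to the limit by the stability of viscosity subsolutions, whereas you pass to the limit directly in the pointwise Key Estimate and then invoke Rademacher; both routes land on the same Evans--Gariepy/Stampacchia conclusion.
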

%%%%%%%%%%

\begin{proof}
Let $(\chi,u)\in\Li(\cQ,[0,1])\times C(\cQ)$ 
be an $L^{1}$-viscosity solution of \eqref{e-3}. 
We prove that 
\begin{equation}\label{classic eq}
\mathbf{1}_{\{u(\cdot,t)>0\}}(x)
=
\mathbf{1}_{\{u(\cdot,t)\ge0\}}(x) 
\ \textrm{for a.e.} \ (x,t)\in\R^{N}\times(0,\ol{t}\land T) 
\end{equation}
for some $\ol{t}>0$.

%We extend the function 
%$H[\chi](x,\cdot,p,X)$ 
%to be equal $0$ when $t<0$ and $T\le t$. 
%We set 
%\[
%H_{n}(x,t,p,X):=
%H[\chi](x,\cdot,p,X)\ast\zeta_{n}(t)
%\]
%for all $(x,t,p,X)\in\cQ\times(\R^N\setminus\{0\})\times\cS^{N}$, 
%where 
%$\zeta_{n}(r):=n\zeta(nr)$ for $r\in\R$ 
%and $\zeta$ is a standard mollification kernel. 
%
%
%We have 
%$H_{n}\in C(\cQ\times(\R^N\setminus\{0\})\times\cS^{N})$ and 
%$H_{n}(x,\cdot,p,X)\to H(x,\cdot,p,X)$ in $L^{1}(0,T)$ 
%as $n\to\infty$. 
%Moreover, it is clear that for all $n\in\N$, 
%$H_{n}$ satisfy \eqref{ancien(A6)} and 
%(A7) with 
%the constants $M_R, C_H$ which are 
%independent of $n\in\N$. 

We use (H6) and set $u_{n}:= u[\chi_n]$. We recall that $u_n$ is the viscosity solutions of 
\eqref{e-2} with $H=H_{n}$ for all $n\in\N$. 

By the comparison theorem for local equations, 
Proposition~\ref{regularity} 
%Propositions \ref{regularity1}, \ref{regularity2}, 
we have 
\begin{gather*}
|u_{n}(x,t)|\le C \ \textrm{on} \ \cQ, \\ 
|u_{n}(x,t)-u_{n}(y,t)|\le C|x-y|, \quad 
|u_{n}(x,t)-u_{n}(x,s)|\le C|t-s|^{1/2}
\end{gather*}
for all $x,y\in\R^N$, $t,s\in[0,T]$ and some $C>0,$ 
which is independent of $n$. 
In view of Ascoli-Arzel\'{a} theorem, 
the stability (see \cite{Bo1,Bo2}) 
and the uniqueness (see \cite{N, Bo1,Bo2}) of 
$L^{1}$-viscosity solutions of \eqref{e-3}, 
we have 
$u_n\to u$ locally uniformly on $\cQ$ 
for $u\in C(\cQ)$ which is the $L^{1}$-viscosity solution 
of \eqref{e-3}.

Moreover, in view of Corollary~\ref{lower bound}, 
we have 
\[
-|Du_{n}(x,t)|\le -\frac{\eta(t)}{\|\nu\|_{\infty}} 
\quad\textrm{in} \ 
\{|u_{n}(\cdot,t)|<\frac{\del_{0}}{4}\}\times(0,\ol{t}\land T) 
\]
for some $\ol{t}>0$. 
By the usual stability result of viscosity solution, 
we get 
\[
-|Du(x,t)|\le -\frac{\eta(t)}{\|\nu\|_{\infty}} 
\quad\textrm{in} \ 
\{|u(\cdot,t)|<\frac{\del_{0}}{4}\}\times(0,\ol{t}\land T), 
\]
which implies that 
$\cL^{N}(\{u(\cdot,t)=0\})=0$ 
for a.e. $t\in(0,\ol{t}\land T)$ 
in view of \cite[Corollary 1 in p. 84]{EG}. 
Therefore, we get \eqref{classic eq}. 
\end{proof}

%%%%%%%%%%%%%
\begin{rem}
By Proposition \ref{weak is classical}, 
we have 
$\mathbf{1}_{\{u(\cdot,t)\ge0\}}\in 
C([0,\ol{t}\land T];L^{1}(\R^N))$ 
for any weak solution $u$ of \eqref{e-1}. 
In view of {\rm (H2)}, we see that 
$t\mapsto H[\mathbf{1}_{\{u\ge0\}}](x,t,p,X)$ 
is continuous on $[0,\ol{t}\land T]$ 
for any $(x,p,X)\in\R^N\times(\R^N\setminus\{0\})
\times\cS^N$. 
\end{rem}
%%%%%%%%%%%%

We state our main result. 
%%%%%%%%%%%%
\begin{thm}[Uniqueness Result of Solutions in a Short Time]\label{uniqueness}
If there exist weak solutions of 
the initial-value problem \eqref{e-1}, 
they are classical and unique in $\R^N\times[0,\ol{t}],$ 
where $\ol{t}$
is given by Theorem~{\rm \ref{key estimate}}.  
 \end{thm}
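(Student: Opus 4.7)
The plan is as follows. Let $u_1, u_2 \in C(\cQ)$ be two weak solutions of \eqref{e-1} with associated $\chi_1, \chi_2 \in \Li(\cQ;[0,1])$. By Proposition \ref{weak is classical}, both are classical on $\R^N \times [0,\ol{t}\land T]$, so $\chi_i(\cdot,t) = \mathbf{1}_{\{u_i(\cdot,t)\ge 0\}}$ almost everywhere and each $u_i$ enjoys the lower gradient bound of Corollary \ref{lower bound} and the perimeter bound of Corollary \ref{estimate perimeter} on this time interval. The task reduces to showing $u_1 \equiv u_2$ on $[0,\ol{t}\land T]$.

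\textbf{Step 1 (Continuous dependence in nonlocal form).} Each $u_i$ is an $L^1$-viscosity solution of the local equation \eqref{e-3} with Hamiltonian $H[\chi_i]$, and by (H3)--(H4) these Hamiltonians satisfy (A6)--(A7) uniformly in $\chi$. I would apply Theorem \ref{continuous dependence} to the pair $(u_1,u_2)$ after handling the measurable-in-time dependence via the approximation scheme of (H6) and Remark \ref{appr-arg}: one doubles the variables with the penalization $|x-y|^4/\ep^4$, uses Ishii's lemma as in the proof of Theorem \ref{continuous dependence}, and estimates the Hamiltonian difference by (H5-(i)) (resp.\ (H5-(ii))). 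After optimization in $\ep$ and passage to the limit in the approximation, one obtains
\begin{equation*}
m(t) := \sup_{x\in\R^N} |u_1(x,t)-u_2(x,t)| \le M_1 \int_0^t K(s)\,ds + M_1 \Bigl(\int_0^t K(s)^2\,ds\Bigr)^{1/2},
\end{equation*}
where $K(s)$ is the relevant spatial sup-norm of $\kap_{\chi_1,\chi_2}(\cdot,s)$ in case (H5-(i)), respectively of the heat-convolved quantity $\ol{\kap}_{\chi_1,\chi_2}(\cdot,s)$ in case (H5-(ii)).

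\textbf{Step 2 (Measure estimate on the symmetric difference).} At each $s\in(0,\ol{t}\land T]$, the set $\{\chi_1(\cdot,s)\neq\chi_2(\cdot,s)\}$ lies in the symmetric difference of the sublevel sets $\{u_1(\cdot,s)\ge 0\}$ and $\{u_2(\cdot,s)\ge 0\}$, which in turn is contained in a tubular neighborhood of $\{u_1(\cdot,s)=0\}\cup\{u_2(\cdot,s)=0\}$ of width at most $\|\nu\|_\infty\, m(s)/\eta(s)$, by Corollary \ref{lower bound}. Combining this with the perimeter bound $\Per(\{u_i(\cdot,s)=0\})\le M_3$ from Corollary \ref{estimate perimeter} and a tube-volume estimate gives
\begin{equation*}
\|\chi_1(\cdot,s)-\chi_2(\cdot,s)\|_{L^1(\R^N)} \le C\,\frac{m(s)}{\eta(s)}
\end{equation*}
for $s\in[0,(\ol{t}\land T)/2]$, with $C$ depending only on $M_3$ and $\|\nu\|_\infty$. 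In case (H5-(i)) this $L^1$-norm is exactly $\kap_{\chi_1,\chi_2}$; in case (H5-(ii)), standard $L^p$-convolution estimates with the heat kernel $G$ (using $\|G(\cdot,\tau)\|_{L^1}=1$ and the uniform compact support of $\chi_1,\chi_2$) propagate the same bound with an extra factor polynomial in $t$.

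\textbf{Step 3 (Grönwall).} Plugging Step 2 into Step 1 on any subinterval $[0,t_0]$ with $t_0 < \ol{t}\land T$ (on which $\eta$ is bounded below by a positive constant, since $\eta$ is continuous, non-increasing, and $\eta(0)=\eta_0>0$) yields
\begin{equation*}
m(t) \le C_1 \int_0^t \frac{m(s)}{\eta(s)}\,ds + C_1 \Bigl(\int_0^t \frac{m(s)^2}{\eta(s)^2}\,ds\Bigr)^{1/2}.
\end{equation*}
Squaring (to absorb the square-root term via $2ab \le a^2+b^2$) and applying the standard Grönwall inequality to $m(t)^2$ forces $m\equiv 0$ on $[0,t_0]$. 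Letting $t_0\nearrow\ol{t}\land T$ and invoking continuity of $u_1,u_2$ concludes the uniqueness on $[0,\ol{t}\land T]$.

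The main obstacle lies in Step 1: Theorem \ref{continuous dependence} is stated for jointly continuous Hamiltonians, while $t \mapsto H[\chi_i](x,t,p,X)$ is only measurable in time through its dependence on the nonlocal term. The delicate point is thus to combine (H6) with the stability of $L^1$-viscosity solutions of \cite{N,Bo1,Bo2} to get a clean bound whose right-hand side is expressed in terms of $\kap_{\chi_1,\chi_2}$ (or $\ol{\kap}_{\chi_1,\chi_2}$). A secondary technical point, most visible in case (H5-(ii)), is that the Green-function convolution must be controlled in a norm compatible with the $m(s)/\eta(s)$ tube estimate so that the Grönwall loop in Step 3 actually closes; here the uniform compact support $B(0,R_T)$ provided by (H1) is essential.
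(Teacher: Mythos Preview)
Your three-step outline has the right shape, but Step~1 claims an estimate that Theorem~\ref{continuous dependence} does not deliver, and this feeds into a mismatch in Step~3.

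Theorem~\ref{continuous dependence} gives
\[
\sup_{x}(u_1-u_2)(x,t)\le M_1\bigl(\kap_1 t+(\kap_2 t)^{1/2}\bigr)
\]
with \emph{constants} $\kap_1,\kap_2$; when (H5) is invoked, these become $\sup_{s\le\tau}K(s)$ on the window $[0,\tau]$. Your integral form $M_1\int_0^t K(s)\,ds+M_1\bigl(\int_0^t K(s)^2\,ds\bigr)^{1/2}$ is strictly stronger and cannot be recovered by time-slicing: summing $(\kap_2\,\Delta t_k)^{1/2}$ over a partition diverges as the mesh goes to~$0$. Consequently the Gr\"onwall loop you set up in Step~3 never closes with the tools at hand. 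The paper instead keeps the $\sup$ form: setting $\del_\tau=\max_{\R^N\times[0,\tau]}|u_1-u_2|$, one gets $\del_\tau\le M_1\kap_\tau(\tau+\sqrt{\tau})$ with $\kap_\tau\le C\del_\tau/\ol\eta$, hence $\del_\tau\le (C/\ol\eta)(\tau+\sqrt{\tau})\,\del_\tau$, which forces $\del_\tau=0$ for small $\tau$; the result on all of $[0,\ol t]$ then follows by a bootstrap on $\ol\tau:=\sup\{\tau:u_1=u_2\text{ on }[0,\tau]\}$.

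Your Step~2 also diverges from the paper. You appeal to Corollary~\ref{estimate perimeter} plus a perimeter$\times$width tube estimate; the paper makes a point of \emph{not} routing through the perimeter and instead proves Lemma~\ref{lem-for-unique}: for case (H5-(i)) the bound on $\cL^N(\{-\del\le u<0\})$ comes from the inclusion $\{-\del\le u\}\subset(I+\tfrac{\del}{\ul\eta}\nu)^{-1}(\Omega_t)$ (which follows directly from the key estimate~\eqref{main-estimate}) and a Jacobian computation; for case (H5-(ii)) it uses the interior cone property together with \cite[Lemma~4.4]{BCLM2} on the heat-convolved quantity. Your route can be made to work once the cone property is invoked to justify the tube-volume inequality, but it is less self-contained and still leaves you with the unresolved issue in Step~1.
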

%%%%%%%%%%%%

We formulate the main ingredient 
of the proof of the above theorem 
as a lemma. 

%%%%%%%%%%%%%
\begin{lem}\label{lem-for-unique}
Let $\ul{t}>0$ and $\eta$ be a continuous function 
on $[0,\ul{t}]$ such that $\eta(t)\ge\ul{\eta}>0$ 
for any $t\in[0,\ul{t}]$ and 
$u:\R^{N}\times[0,\ul{t}]\to\R$ be a bounded Lipschitz continuous function 
with respect to $x$ variable which satisfies 
\eqref{hyplip}, \eqref{umoins1} and \eqref{main-estimate}
on $[0,\ul{t}].$ 
Then we have 
\begin{gather}
\int_{\R^{N}}
\mathbf{1}_{\{-\del\le u(\cdot,t)<0\}}(y)\,dy 
\le \frac{M_4\del}{\ul{\eta}}, \label{dislo-ineq}\\
\int_{0}^{t}\int_{\R^N}
G(x-y,t-s)
\mathbf{1}_{\{-\del\le u(\cdot,s)<0\}}(y)\,dyds 
\le \frac{M_5\del}{\ul{\eta}} \label{FN-ineq}
\end{gather}
for any 
\begin{gather*}
%t\in(0, \ol{t}] 
%\ \textrm{such that} \ \eta(t)\ge \frac{\eta_{0}}{2}, \\
\del\in(0, 
\min\{ \frac{\del_{0}}{4}, 
\frac{\ul{\eta}\delta_0}{4 L\|\nu\|_\infty},
\ul{\eta}\ol{\lam}
\}], 
\end{gather*}
where $M_4$ is a constant depending on 
$N, R, L, \|D\nu\|_{\infty}$ and 
$M_5$ is a constant depending on 
$N, R$, $\lam_{0}$, $\del_{0}$, 
$L$, $\|\nu\|_\infty$, $\|D\nu\|_\infty$. 
\end{lem}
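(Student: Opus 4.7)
My plan is to deduce from \eqref{main-estimate} both a lower gradient bound and a uniform perimeter estimate for the intermediate level sets, and then combine these with the coarea formula applied respectively with constant weight and with the Gaussian weight $G(x-\cdot,t-s)$. Throughout, I set $E_s:=\{x\in\R^N\,:\,-\del\le u(x,s)<0\}$, $\Om_s^r:=\{u(\cdot,s)>r\}$ and $\Gam_s^r:=\pl\Om_s^r$; the restriction $\del\le\del_0/4$ guarantees $E_s\subset U_s$, so \eqref{main-estimate} is available on $E_s$. Since $u(\cdot,s)$ is $L$-Lipschitz by \eqref{hyplip}, it is differentiable $\cL^N$-a.e., and at any such point $x\in U_s$ one can divide \eqref{main-estimate} by $\lam$ and let $\lam\downarrow 0$ to get $\langle\nabla u(x,s),\nu(x)\rangle\ge\ul\eta$, whence
\[
|\nabla u(x,s)|\ge \frac{\ul\eta}{\|\nu\|_\infty}\quad\text{and}\quad |\nu(x)|\ge \frac{\ul\eta}{L}\quad\text{for a.e. }x\in U_s.
\]

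Next, following the argument in the proof of Corollary~\ref{cone}, for any $r\in[-\del_0/4,\del_0/4]$, $z\in\Gam_s^r$ and $\lam\in[0,\ol\lam]$, combining \eqref{main-estimate} with \eqref{hyplip} produces $\ol B(z+\lam\nu(z),\lam\ul\eta/L)\subset\ol\Om_s^r$. This is an interior cone at $z$ of axis $\nu(z)/|\nu(z)|$, end-radius $\rho=\ol\lam\ul\eta/L$ and height $\theta=\ol\lam|\nu(z)|$, and the lower bound $|\nu|\ge\ul\eta/L$ shows that $\rho$, $\theta$ and the ratio $\theta/\rho$ are all bounded above and below purely in terms of $\ol\lam,\ul\eta,L,\|\nu\|_\infty$. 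Together with \eqref{umoins1}, Lemma~\ref{perimeter} then yields a uniform perimeter bound $\Per(\Om_s^r)\le M$ for all $r\in[-\del_0/4,\del_0/4]$ and $s\in[0,\ul t]$, with $M=M(N,R,L,\|\nu\|_\infty,\|D\nu\|_\infty)$. The threshold $\del\le\ul\eta\del_0/(4L\|\nu\|_\infty)$ is designed precisely so that $z+\lam\nu(z)$ stays in $U_s$ for every $z\in E_s$ and $\lam\in[0,\del/\eta(s)]$, keeping the whole argument inside the region where the key estimate is valid.

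For \eqref{dislo-ineq} I apply the coarea formula to $u(\cdot,s)$ on $E_s$:
\[
\int_{E_s}|\nabla u(y,s)|\,dy=\int_{-\del}^{0}\cH^{N-1}(\Gam_s^r)\,dr\le M\del,
\]
and the lower gradient bound turns this into $\cL^N(E_s)\le M\|\nu\|_\infty\del/\ul\eta$, which is \eqref{dislo-ineq} with $M_4=M\|\nu\|_\infty$. For \eqref{FN-ineq}, the same coarea identity with the additional weight $G(x-\cdot,t-s)$ gives
\[
\int_{\R^N}G(x-y,t-s)\mathbf{1}_{E_s}(y)\,dy\le \frac{\|\nu\|_\infty}{\ul\eta}\int_{-\del}^{0}\int_{\Gam_s^r}G(x-y,t-s)\,d\cH^{N-1}(y)\,dr.
\]
The uniform interior cone property of the previous paragraph implies that each $\Gam_s^r$ is covered by a finite number of Lipschitz graphs with uniformly bounded slope, where the number of pieces is controlled by $R$ and the cone aperture alone. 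Parametrising each such piece over its base hyperplane and using $\int_{\R}e^{-\mu^2/(4\tau)}\,d\mu=\sqrt{4\pi\tau}$ produces $\int_{\Gam_s^r}G(x-y,\tau)\,d\cH^{N-1}(y)\le C/\sqrt\tau$ uniformly in $s,r$; integrating in $s$ via $\int_0^t(t-s)^{-1/2}\,ds=2\sqrt t$ then yields \eqref{FN-ineq}.

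The heart of the argument is the uniform interior cone property: it is the bridge between the key estimate \eqref{main-estimate} and the regularity of the level sets required both for the application of Lemma~\ref{perimeter} and for the reduction of the Gaussian surface integral to a one-dimensional computation. The most delicate technical point is to control the number of Lipschitz-graph pieces covering $\Gam_s^r$ purely in terms of $N, R$ and the universal cone parameters, so that the constants $M_4, M_5$ depend only on the quantities listed in the statement.
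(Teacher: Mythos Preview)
Your argument is correct in outline, but it follows a genuinely different route from the paper on both inequalities, and the differences are worth noting.

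For \eqref{dislo-ineq} the paper does \emph{not} use the perimeter estimate at all. Instead it shows directly, via \eqref{main-estimate} and \eqref{hyplip}, that
\[
\{-\del\le u(\cdot,t)\}\subset \psi_{\del/\ul\eta}^{-1}(\Om_t),\qquad \psi_\lam=I+\lam\nu,
\]
and then compares Lebesgue measures by the Jacobian formula $\det(D\psi_\lam^{-1})=1-\lam\,\tr(D\nu)+o(\lam)$. This yields the clean constant $M_4=2N\|D\nu\|_\infty\,\cL^N(B(0,R))$, depending only on $N,R,\|D\nu\|_\infty$ as stated in the lemma. Your coarea--plus--perimeter approach is valid, but the perimeter constant coming from Lemma~\ref{perimeter} depends on the cone aperture $\rho=\ol\lam\,\ul\eta/L$, hence on $\ul\eta$, $\ol\lam$ and $\|\nu\|_\infty$; so your $M_4$ carries more dependencies than the lemma claims. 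This is harmless for the application in Theorem~\ref{uniqueness} (where $\ul\eta$ is fixed), but it is strictly weaker than the statement. Incidentally, it is precisely in the paper's change-of-variables argument that the thresholds $\del\le\ul\eta\ol\lam$ and $\del\le\ul\eta\del_0/(4L\|\nu\|_\infty)$ are needed, to ensure $\lam=\del/\ul\eta\le\ol\lam$ and to handle points where $u\ge\del_0/4$; your own argument only uses $\del\le\del_0/4$, so your remark about those thresholds is misplaced.

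For \eqref{FN-ineq} the paper first uses the increase principle to obtain $E_s\subset(\ol\Om_s+\tfrac{\|\nu\|_\infty\del}{\ul\eta}B)\setminus\ol\Om_s$ and then quotes \cite[Lemma~4.4]{BCLM2}, which gives a Lipschitz bound in the dilation parameter $r$ for $\phi(x,t,r)=\int_0^t\!\int G(x-y,t-s)\mathbf 1_{\ol\Om_s+rB}(y)\,dy\,ds$ under the interior cone property. Your weighted coarea plus the estimate $\int_{\Gam_s^r}G(x-\cdot,\tau)\,d\cH^{N-1}\le C/\sqrt\tau$ on Lipschitz graphs is essentially a hands-on proof of (a version of) that cited lemma; it is correct, but it produces an extra factor $2\sqrt t$ after time integration, so your $M_5$ also depends on $\ul t$ (or $T$). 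Again this does not affect the uniqueness proof.

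In short: both of your arguments go through, but the paper's proof of \eqref{dislo-ineq} is deliberately elementary (no perimeter, no coarea) and achieves the sharper constant dependence announced in the statement, while for \eqref{FN-ineq} the paper outsources the Gaussian estimate to \cite{BCLM2} rather than unpacking it as you do.
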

%%%%%%%%%%%%%%%

\begin{proof}
We first prove the estimate \eqref{dislo-ineq}. 
We have 
\begin{eqnarray}\label{1-estim}
\int_{\R^{N}}
\mathbf{1}_{\{-\del\le u(\cdot,t)<0\}}(y)\,dy 
&= &
{\mathcal{L}}^N (\{-\del\le u(\cdot,t)\})
- {\mathcal{L}}^N(\Omega_t) 
\end{eqnarray}
for $\del>0$, since 
$\Omega_t:=\{ u(\cdot,t)>0\}\subset 
\{-\del\le u(\cdot,t)\}.$

We claim that
\begin{eqnarray}\label{claim1234}
\{-\del\le u(\cdot,t)\}\subset
(I+\frac{\del}{\ul{\eta}}\nu)^{-1}(\Omega_t)
\end{eqnarray}
for $t\in [0,\ul{t}]$  
and $\del$ small enough. 
We recall that $\psi_\lambda=(I+\lam\nu)$ is a $C^1$-diffeomorphism
when $\lam$ satisfies \eqref{deflambda0}. 
To prove the claim, let $(x,t)\in \R^N\times [0,\ul{t}]$ 
such that $u(x,t)\geq -\del$ 
and set 
$$
\lam :=\frac{\delta}{\ul{\eta}}.
$$
We distinguish two cases.
If $u(x,t)\geq \delta_0/4,$ then, by \eqref{hyplip}, 
\begin{eqnarray*}
u(x+\lam \nu(x),t)\geq u(x,t) -\lam L\|\nu\|_\infty
\ge \frac{\del_{0}}{4}-\lam L\|\nu\|_\infty
\geq 0
\end{eqnarray*}
for $\lam \leq \delta_0/(4L\|\nu\|_\infty).$
If $-\del\le u(x,t)\leq \delta_0/4,$ then,
by \eqref{main-estimate},
\begin{eqnarray*}
u(x+\lam \nu(x),t)\geq u(x,t)+\lam\eta(t)
\ge u(x,t)+\lam \ul{\eta} 
\geq -\del+\lam \ul{\eta}=0
\end{eqnarray*}
for $\del\leq \delta_0/4,$
$\lam \leq \ol{\lam}$ and $t\in[0,\ul{t}].$ 
Finally, \eqref{claim1234} holds if $\del$ is such that
$$
\del\le \min\{\frac{\del_{0}}{4}, 
\frac{\ul{\eta}\delta_0}{4 L\|\nu\|_\infty},
\ul{\eta}\ol{\lam}\}.
$$
By a change of variable, 
we have
\begin{eqnarray*}
{\mathcal{L}}^N ((I+\lam\nu)^{-1}(\Omega_t))
=\int_{\Omega_t} {\rm det}(D(I+\lambda\nu)^{-1})dx
\leq (1+2N\lam \|D\nu\|_\infty){\mathcal{L}}^N (\Omega_t^i), 
\end{eqnarray*}
for small $\delta$ and therefore small $\lam$,
since 
\[
{\rm det}(D(I+\lambda\nu)^{-1}(x))
= \left({\rm det}(I+\lambda D\nu(x))\right)^{-1}
=1-\lam\, {\rm tr}(D\nu)+o(\lam).
\]
From \eqref{1-estim} and \eqref{claim1234}, it follows
\begin{align*}
\int_{\R^{N}}
\mathbf{1}_{\{-\del\le u(\cdot,t)<0\}}(y)\,dy 
&\le 
{\mathcal{L}}^N ((I+\frac{\delta}{\ul{\eta}}\nu)^{-1}(\Omega_t))
- {\mathcal{L}}^N(\Omega_t)\\
{}&\leq 
2N\lam\|D\nu\|_{\infty} {\mathcal{L}}^N(\Om_{t})\\
{}&\leq 
\frac{2N\|D\nu\|_{\infty}{\mathcal{L}}^N(B(0,R))}{\ul{\eta}}\del
=:\frac{M_4\del}{\ul{\eta}} 
\end{align*}
by \eqref{umoins1}.

We next prove the estimate \eqref{FN-ineq}. 
Note that \eqref{main-estimate} implies 
the lower gradient estimate 
\[
-|Du(x,t)|\le -\frac{\ul{\eta}}{\|\nu\|_{\infty}} 
\quad\textrm{in} \ 
\{|u(\cdot,t)|<\frac{\del_{0}}{4}\}\times(0,\ul{t}). 
\]
From the increase principle
of \cite[Lemma 2.3]{BL}, we get
$$
\{-\del\le u(\cdot,t)<0\} 
\subset
(\ol{\Om}_{t}+\frac{\|\nu\|_\infty\del}{\ul{\eta}}B)
\setminus \ol{\Om}_{t},
$$ 
where 
$B:=B(0,1)\subset\R^{N}$. 
Therefore, 
noting that \eqref{main-estimate} implies that 
$\Om_{t}$ has a interior cone property as we can see 
in the proof of Corollary \ref{cone}, 
by \cite[Lemma 4.4]{BCLM2}
for some 
$M_{5}$ depending on
$N, R$, $\lam_{0}$, $\del_{0}$, $\ul{\eta}$, 
$L$, $\|\nu\|_\infty$, $\|D\nu\|_\infty$, 
we have 
\begin{align*}
{}&
\int_{0}^{t}\int_{\R^N}
G(x-y,t-s)
\mathbf{1}_{\{-\del\le u(\cdot,s)<0\}}(y)\,dyds\\
\le & \ 
\int_{0}^{t}\int_{\R^N}
G(x-y,t-s)
\mathbf{1}_{(\ol{\Om}_{t}+\|\nu\|_\infty 
\del B/\ul{\eta})\setminus 
\ol{\Om}_{t}}(y)\,dyds\\
= & \ 
|\phi(x,t,\frac{\|\nu\|_\infty\del}{\ul{\eta}})-\phi(x,t,0)|
\le \frac{M_{5}\del}{\ul{\eta}}, 
\end{align*}
where 
\[
\phi(x,t,r):=
\int_{0}^{t}\int_{\R^N}
G(x-y,t-s)
\mathbf{1}_{\ol{\Om}_{s}+rB}(y)\,dyds. 
\]
\end{proof}

%%%%%%%%
\begin{proof}[Proof of Theorem {\rm \ref{uniqueness}}]
Suppose that 
there exist viscosity solutions $u_{1}$ and $u_{2}$ of 
\eqref{e-1}. 
Let $\tau\in(0,T]$
which will be fixed later and
set
\[
\del_{\tau}:=\max_{\R^{N}\times[0,\tau]}|(u_{1}-u_{2})(x,t)|. 
\]

In view of Theorem \ref{continuous dependence} and 
(H5-(i)) or (H5-(ii)), 
we have 
\begin{eqnarray}\label{kaptau}
\del_{\tau}
\le 
M_{1}\kap_\tau(\tau+\tau^{1/2})
 \quad \textrm{or} \quad  
\del_{\tau}
\le 
M_{1}\ol{\kap}_\tau(\tau+\tau^{1/2}), 
\end{eqnarray}
where 
\begin{gather}
\kap_\tau:=
\sup_{t\in[0,\tau]}
\int_{\R^N}
|\mathbf{1}_{\{u_{1}(\cdot,t)\ge0\}}(y)-
\mathbf{1}_{\{u_{2}(\cdot,t)\ge0\}}(y)|\,dy 
\label{dislocation}\\
\textrm{and} \nonumber\\
\ol{\kap}_\tau:=
\sup_{x\in\R^{N}, t\in[0,\tau]}
\int_{0}^{t}\int_{\R^N}
G(x-y,t-s)|\mathbf{1}_{\{u_{1}(\cdot,s)\ge0\}}(y)
-\mathbf{1}_{\{u_{2}(\cdot,s)\ge0\}}(y)|\,dyds.
\label{FN}
\end{gather}
Note that 
\[
|\mathbf{1}_{\{u_1(\cdot,t)\ge0\}}(y)
-\mathbf{1}_{\{u_2(\cdot,t)\ge0\}}(y)|
\le 
\mathbf{1}_{\{-\del_{\tau}\le u_1(\cdot,t)<0\}}(y)
+\mathbf{1}_{\{-\del_{\tau}\le u_2(\cdot,t)<0\}}(y).  
\]
We fix
$$
t^\ast\in(0, \ol{t}\land T),
$$
where $\ol{t}$ is given by Theorem~\ref{key estimate}.
Take $\tau\leq t^\ast$ small enough
in order that $\del_\tau\leq \del_0/4,$ 
the lower-bound gradient estimate
(Corollary~\ref{lower bound}) holds on $[0,\tau]$ and,
for all $t\in [0,\tau],$ $\eta(t)\geq \eta(t^\ast)=:\ol{\eta}>0.$ 
Moreover, take $\tau\le t^{\ast}$ such that 
\[
\del_{\tau}\in(0, 
\min\{ \frac{\del_{0}}{4}, 
\frac{\ol{\eta}\delta_0 e^{-KT}}{4 \|\nu\|_\infty\|Du_{0}\|},
\ol{\eta}\ol{\lam}
\}], 
\]
where $K$ is the constant give by Proposition \ref{regularity}. 
By continuity of $u_1$, $u_2$
which achieve the same initial condition $u_0$, 
it is always possible to find $\tau>0$ small enough
in order that the above condition holds.

By Lemma \ref{lem-for-unique}, we have 
\[
\kap_{\tau}\le 
\frac{C_1\del_{\tau}}{\ol{\eta}}
\]
for some $C_1$ depending on 
$N, R_{T}, C_H, \|Du_{0}\|_{\infty}, 
\|D\nu\|_{\infty}$ and 
\[
\ol{\kap}_{\tau}\le 
\frac{C_2\del_{\tau}}{\ol{\eta}}
\]
for some $C_2$ depending on 
$C_{H}, \lam_{0},\del_{0},\eta_{0},\|Du_{0}\|_{\infty}
\|\nu\|_\infty, \|D\nu\|_\infty$.

Therefore, we get 
\begin{equation*}
\del_{\tau}
\le 
\frac{C}{\ol{\eta}}\del_{\tau}(\tau+\sqrt{\tau})
\end{equation*}
for some constant $C(C_H, \lam_{0}, \del_0, \eta_0, R_{T}, 
\|Du_{0}\|_{\infty},N,
\|\nu\|_\infty, \|D\nu\|_\infty)>0$ which is independent of 
$\tau$. 
For $\tau$ small enough, we have $\del_{\tau}=0$. 
It follows $u_{1}=u_{2}$ on 
$\R^{N}\times[0,\tau]$. 

We consider 
$\ol{\tau}=\sup\{\tau>0\mid u_{1}=u_{2} \ 
\textrm{on} \ \R^{N}\times[0,\tau]\}$. 
If $\ol{\tau}<t^{\ast}$, 
then we can repeat the above proof from time 
$\ol{\tau}$ instead of $0$. 
Finally, we have 
$u_{1}=u_{2}$ on 
$\R^{N}\times[0,t^{\ast}]$ for all $t^{\ast}<\ol{t},$
which gives the conclusion. 
\end{proof}

%%%%%%%%%%%%%%%%%%%%%%%%%%%%%%%%%%%%%%%%%%%%%%%%%%%%%%%%%%%%%%%%
%%%%%%%%%%%%%%%%%%%%%%%%%%%%%%%%%%%%%%%%%%%%%%%%%%%%%%%%%%%%%%%%
%%%%%%%%%%%%%%%%%%%%%%%%%%%%%%%%%%%%%%%%%%%%%%%%%%%%%%%%%%%%%%%%
%%%%%%%%%%%%%%%%%%%%%%%%%%%%%%%%%%%%%%%%%%%%%%%%%%%%%%%%%%%%%%%%

\section{Applications}
\label{sec:applic}

In the companion paper \cite{BCLM3}, 
the framework to show existence of weak solutions 
of \eqref{e-1} is given and 
as applications, 
existence results for weak solutions of level set equations 
appearing in dislocations' theory and in the study of 
FitzHugh-Nagumo systems are presented 
(see \cite[Sections 3.2, 4.2]{BCLM3}). 
In this section, we give uniqueness results 
for viscosity solutions of such equations.

%%%%%%%%%%%%%%%%%%
\subsection{Dislocation Type Equations}
We consider 
the level set equation of 
the evolution of hypersurfaces:  
\begin{equation}\label{dislocation evo}
V=
M(n(x))\bigl(c_{0}(\cdot,t)\ast\mathbf{1}_{\ol{\Om}_{t}}(x)+c_{1}(x,t)
-\Div_{\Gam_{t}}(\xi(n)(x))\bigr) 
\quad\textrm{on} \ 
\Gam_{t}, 
\end{equation}
where we use the notations in Introduction. 
Here $M:\bS^{N-1}\to\R^{+}$, 
$c_0, c_1:\cQ\to\R$, 
$\xi:\R^{N}\to\R^{N}$ 
are given functions which satisfy 
the following assumption {\rm (A)}: 
\begin{itemize}
\item[(i)]
$M\in C(\bS^{N-1})$, 
$0\le M(\hat{p})\le\ol{M}$  
for all $p\in\R^{N}\setminus\{0\}$, 
where $\hat{p}=p/|p|$ and 
$\hat{p}\mapsto\sqrt{M(\hat{p})}$ 
is Lipschitz continuous; 
\item[(ii)]
$c_{0}\in C([0,T]; L^{1}(\R^n))$, $c_1\in C(\cQ)$, 
$D_{x}c_{0}\in\Li([0,T];L^{1}(\R^N))$; 
\item[(iii)]
there exist constants $L_{c}, M_{c}>0$ 
such that, for any $x,y\in\R^N$ and $t\in[0,T]$
\begin{gather*}
\|D_{x}c_{0}(\cdot,t)\|_{L^{1}(\R^{N})}\le L_{c}, \quad 
|c_1(x,t)-c_1(y,t)|\le L_{c}|x-y|,\\ 
\|c_{0}(\cdot,t)\|_{L^{1}(\R^N)}+|c_{0}(x,t)|
+|c_{1}(x,t)|\le M_{c}; 
\end{gather*}
\item[(iv)]
$\xi(p)=(\xi^{1}(p),\ldots,\xi^{N}(p))
=D\gam(p)=(\pl\gam(p)/\pl p_{1},\ldots,\pl\gam(p)/\pl p_{N})$ 
for some positively homogeneous function 
$\gam\in C^{2}(\R^N\setminus\{0\})$ 
of degree $1$, 
i.e., $\gam(\al p)=\al\gam(p)$ 
for $\al>0$, $p\in\R^N\setminus\{0\}$, 
which satisfies 
\[
D^{2}\gam(p)=\left(
\begin{array}{ccc}
\frac{\pl^{2}\gam(p)}{\pl p_{1}\pl p_{1}} & 
\cdots & 
\frac{\pl^{2}\gam(p)}{\pl p_{1}\pl p_{N}} \\
\vdots & {} & \vdots \\
\frac{\pl^{2}\gam(p)}{\pl p_{N}\pl p_{1}} & 
\cdots & 
\frac{\pl^{2}\gam(p)}{\pl p_{N}\pl p_{N}}
\end{array}
\right)
\ge 0 
\ 
\textrm{for all} \ 
p\in\R^{N}\setminus\{0\}, 
\]
\[
\sup_{p\in\R^{N}\setminus\{0\}}|D^{2}\gam(\hat{p})|<\infty \ 
\textrm{and} \ 
\hat{p}\mapsto
\bigl(D^{2}\gam(\hat{p})^{1/2}\bigr) 
\ \textrm{is Lipschitz continuous}. 
\]
\end{itemize}

The function $\xi$ is called the \textit{Cahn-Hoffman} vector 
and the last term 
in the right hand side of \eqref{dislocation evo} 
is the anisotropic curvature of $\Gam_{t}$ at $x$ 
given by 
$\Div_{\Gam_{t}}
(\xi(n)(x))
=\tr\bigl((I-n(x)\otimes n(x))
D_{x}(\xi(n))(x) 
\bigr)$. 
We refer the reader to the monograph by Giga \cite{G} and the references
therein for more details.

For reader's convenience, 
we derive the level set equation of \eqref{dislocation model},
see \cite{AHBM, G}. 
We have 
\begin{equation}\label{aniso-1}
\Div_{\Gam_{t}}(\xi(n)(x))=
\tr(
D_{x}(\xi(n))(x)
)
-\tr(n(x)\otimes n(x) 
D_{x}(\xi(n))(x)
). 
\end{equation}
Since $\gam$ is positively homogeneous of degree $1$, 
$\xi^{i}$ is positively homogeneous of degree $0$ 
for all $i\in\{1,\ldots,N\}$, i.e.,  
$\xi^{i}(\al p)=\xi^{i}(p)$ 
for all $\al>0$, $p\in\R^{N}\setminus\{0\}$. 
Differentiating in $\al$, setting $\al=1$
and noting that $D_{p}\xi(p)=D^{2}\gam(p)\in\cS^{N},$
yields 
\begin{equation}\label{aniso-2}
\sum_{j=1}^{N}\xi^{i}_{p_j}(p)p_{j}
=\sum_{j=1}^{N}\xi^{j}_{p_i}(p)p_{j}
=0 
\quad\textrm{for all} \ 
i\in\{1,\ldots,N\}, 
\end{equation}
where 
$\xi^{i}_{p_j}(p)=\pl\xi^{i}(p)/\pl p_{j}$. 
Equality \eqref{aniso-2} yields 
\begin{equation*}
\tr(n(x)\otimes n(x) D_{x}(\xi(n))(x))
= 
\sum_{i,j,k} n^{k}(x)n^{j}(x)\xi^{j}_{p_i}(n(x))n^{i}_{x_k}(x)=0,
\end{equation*}
where $n^{i}_{x_{k}}(x)=\pl n^{i}(x)/\pl x_{k},$ and 
\[
R_{p}D^{2}\gam(p)=D^{2}\gam(p)=D^{2}\gam(p)R_{p}, 
\]
where $R_{p}=I-\hat{p}\otimes \hat{p}$. 
Introducing an auxiliary function $u:\cQ\to\R$ 
such that 
$u(\cdot,t)=0$ on $\Gam_{t}$ 
and 
$u(\cdot,t)>0$ in $\Om_{t}$ 
for all $t\in[0,T]$, 
we note $n(x)=-Du(x)/|Du(x)|$ and it follows 
\begin{align*}
\Div_{\Gam_{t}}(\xi(n)(x))
&=\, 
\Div_{x}\xi(-Du(x))
=-\tr\bigl(D^{2}\gam(-Du(x))D^{2}u(x)\bigr)\\
&=\, 
-\tr\bigl(R_{Du(x)}
D^{2}\gam(-Du(x))
R_{Du(x)}D^{2}u(x)\bigr)\\
&=\, 
\frac{-1}{|Du(x)|}
\tr\bigl(R_{Du(x)}
D^{2}\gam\bigl(-\frac{Du(x)}{|Du(x)|}\bigr)
R_{Du(x)}
D^{2}u(x)\bigr). 
\end{align*}
In the above equalities, we used the homogeneity 
of degree $0$ of $\xi$ and 
of degree $-1$ of $D^{2}\gam$. 
Set 
\begin{gather*}
c[\chi](x,t,p):= 
M(-\hat{p})\bigl(c_0(\cdot,t)\ast\chi(\cdot,t)(x)
+c_1(x,t)\bigr), \\
\sig(p):= 
\sqrt{M(-\hat{p})}
\bigl(D^{2}\gam(-\hat{p})\bigr)^{1/2}
R_{p}, \\
H_{d}[\chi](x,t,p,X):= 
-c[\chi](x,t,p)|p|
-\tr\bigl(\sig(p)\sig^{T}(p)X\bigr), 
\end{gather*}
for any $\chi\in\Li(\cQ,[0,1])$, 
$(x,t,p,X)\in\cQ\times(\R^N\setminus\{0\})\times\cS^{N}$. 
The level set equation 
of \eqref{dislocation evo} is the equation 
\[
u_t+H_{d}[\mathbf{1}_{\{u(\cdot,t)\ge0\}}]
(x,t,Du,D^{2}u)=0 
\quad\textrm{in} \ 
\Q, 
\]
which is a particular case of \eqref{e-1}. 

%%%%%%%%%%%%%%
\begin{thm}\label{dislocation thm}
Under assumptions {\rm (A)}, {\rm (I1)} 
and {\rm (I2)}, 
the initial value problem 
\eqref{e-1} with $H=H_{d}$ 
has at least a weak solution in $\cQ$. 
Moreover, 
weak solutions are classical and unique 
in $\R^{N}\times[0,t^{\ast}]$ 
for some $t^{\ast}\in(0,T]$ 
which depends only on 
$L_{c}, M_{c}, \ol{M}, \del_0, \eta_0, R_0,$  $\|Du_{0}\|_{\infty}$
and $\|\nu\|_{\infty}$.  
\end{thm}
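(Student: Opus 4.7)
The plan is to extract existence of a weak solution from the companion paper \cite{BCLM3}, where the abstract framework for nonlocal velocities of convolution type is developed, and to deduce the classical and unique nature of weak solutions on a short time interval from Theorem \ref{uniqueness}. The bulk of the argument therefore consists in verifying that $H_d[\chi]$ satisfies (H1)--(H6), the relevant variant being (H5-(i)) since the $\chi$-dependence enters only through the pure spatial convolution $c_0(\cdot,t)\ast\chi(\cdot,t)$.

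First I would observe that $H_d[\chi]$ has exactly the structure of the Hamiltonian in Example \ref{ex-2} (with trivial $\inf\sup$) and coefficients
\[
c[\chi](x,t,p) = M(-\hat p)\bigl(c_0(\cdot,t)\ast\chi(\cdot,t)(x)+c_1(x,t)\bigr),
\qquad
\sigma(p) = \sqrt{M(-\hat p)}\bigl(D^2\gamma(-\hat p)\bigr)^{1/2}R_p.
\]
Assumption (A) supplies what Example \ref{ex-2} requires: boundedness and $x$-Lipschitz continuity of $c[\chi]$, uniformly in $\chi\in\Li(\cQ,[0,1])$, from $\ol M<\infty$, $\|c_0(\cdot,t)\|_\infty\le M_c$, $\|D_xc_0(\cdot,t)\|_{L^1}\le L_c$ and the Lipschitz property of $c_1$; the geometric identity $\sigma^T(p)p=0$ is immediate from $R_p p=0$; and the estimate $|\sigma(p)-\sigma(q)|\le C|p-q|/(|p|+|q|)$ follows from positive homogeneity of $\gamma$ combined with the Lipschitz continuity of $\hat p\mapsto\sqrt{M(\hat p)}$ and of $\hat p\mapsto(D^2\gamma(\hat p))^{1/2}$. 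These checks yield (H2)--(H4) uniformly in $\chi$; the quadratic barrier built in Example \ref{ex-2} depends only on $\sup|c|$ and $R_0$, again uniform in $\chi$, providing the $L^\infty$-bound and support condition outside a ball required by (H1).

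The core new point is (H5-(i)). Since $\sigma$ does not depend on $\chi$ and
\[
|c[\chi_1](x,t,p)-c[\chi_2](x,t,p)|\le \ol M\, M_c\int_{\R^N}|\chi_1(y,t)-\chi_2(y,t)|\,dy = \ol M\, M_c\,\kappa_{\chi_1,\chi_2}(x,t),
\]
the (A4)-computation of Example \ref{ex-2} applies verbatim with $\|c_1-c_2\|_\infty$ replaced by $\ol M M_c\,\kappa_{\chi_1,\chi_2}$ and with $\|\sigma_1-\sigma_2\|_\infty=0$, yielding (H5-(i)) with the second constant vanishing. For (H6), the time regularizations $\chi_n(x,t)=n\int_t^{t+1/n}\chi(x,s)\,ds$ make $c_0\ast\chi_n$ continuous in $(x,t)$, so $H_n$ satisfies (A1)--(A4); the uniform Lipschitz estimates of Proposition \ref{regularity} together with the stability/uniqueness theory for $L^1$-viscosity solutions (see \cite{Bo1,Bo2,N}) then give $u[\chi_n]\to u[\chi]$ uniformly on $\cQ$.

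With (H1)--(H6) in hand, Theorem \ref{uniqueness} shows that any two weak solutions coincide on $[0,\ol{t}\wedge T]$, while Proposition \ref{weak is classical} ensures that any such weak solution is classical there. Setting $t^\ast=\ol{t}\wedge T$ and tracking the provenance of $\ol{t}$ through Theorem \ref{key estimate} shows that $t^\ast$ depends only on the constants $C_H, L_H$ from (H3)--(H5-(i))---themselves controlled by $\ol M, M_c, L_c$ and the fixed $\xi$---and on $\delta_0, \eta_0, R_0, \|Du_0\|_\infty, \|\nu\|_\infty$, as announced. The main obstacle is not a single delicate inequality but the bookkeeping required to keep all constants in (H1)--(H6) genuinely uniform in $\chi$; the geometric assumptions on $\gamma$ and $M$ are precisely tailored so that the $\chi$-independent diffusion $\sigma$ falls within the scope of Example \ref{ex-2}, and so that the $\chi$-dependent Lipschitz constants for $c[\chi]$ are dominated by $L_c$ alone.
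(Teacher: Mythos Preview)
Your proposal is correct and follows essentially the same route as the paper: cite existence from \cite{BCLM3}, verify (H1)--(H6) for $H_d$ via the structure of Example~\ref{ex-2} (with the key computation $|c[\chi_1]-c[\chi_2]|\le \ol{M}M_c\,\kappa_{\chi_1,\chi_2}$ for (H5-(i))), and then invoke Theorem~\ref{uniqueness} and Proposition~\ref{weak is classical}. The only cosmetic difference is that the paper handles (H1) by mollifying $c[\chi](x,\cdot,p)$ in time with a standard kernel $\zeta_n$ and passing to the limit via Ascoli--Arzel\`a and $L^1$-stability, then remarks that this same mollification can replace the $\chi_n$-regularization of (H6); you instead invoke the $\chi_n$ of (H6) directly, which is equivalent.
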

%%%%%%%%%%%%%

We refer to \cite{F, FM} for the short time existence and 
uniqueness of the solution of a dislocation dynamics equation 
with a mean curvature term under different assumptions.

\begin{proof}
The existence of weak solutions is proved 
in \cite[Theorem 3.3]{BCLM3}. 
By using Theorem \ref{uniqueness}, 
we prove a short time uniqueness. 
It is easy to check that 
(H2), (H3) are satisfied. 
Due to the arguments in 
Example \ref{ex-2} and assumptions (A) (i), (iv), 
we see that (H4) is satisfied. 
We prove that 
$H_{d}$ satisfies (H1) and (H5-(i)). 
We first check (H5-(i)). 
Set 
$\tilde{c}_{i}(x,t,p):=c[\chi_{i}](x,t,p)$ 
for $(x,t,p)\in\cQ\times(\R^N\setminus\{0\})$, 
$\chi_{i}\in\Li(\cQ,[0,1])$ and 
$i=1,2$. 
Note that 
\begin{align*}
{}&\, 
|\tilde{c}_{1}(x,t,p)-\tilde{c}_{2}(x,t,p)|\\
\le&\, 
M(-\hat{p})
\int_{\R^N}|c_0(x-y)(\chi_{1}(y,t)-\chi_{2}(y,t))|\,dy\\
\le&\, 
\ol{M}M_{c}
\int_{\R^N}|\chi_{1}(y,t)-\chi_{2}(y,t)|\,dy. 
\end{align*}
%The calculation yields (H5-(i)). 

We finally check (H1). 
Let $(u,\chi)\in C(\cQ)\times\Li(\cQ,[0,1])$ 
be a $L^{1}$-viscosity solution of \eqref{e-3}.
We extend the functions 
$c[\chi](x,\cdot,p)$ to be 
equal $0$ when $t<0$ and $T\le t$. 
We set 
\begin{gather*}
c^{n}(x,t,p):=
c[\chi](x,\cdot,p)\ast\zeta_{n}(t), 
\end{gather*}
for all $(x,t,p)\in\cQ\times(\R^N\setminus\{0\})$, 
where 
$\zeta_{n}(r):=n\zeta(nr)$ for $r\in\R$ 
and $\zeta$ is a standard mollification kernel. 
Then we have 
$c^{n}\in C(\cQ\times(\R^N\setminus\{0\}))$, 
\begin{gather*}
|\int_{0}^{t}c^{n}(x,s,p)-c[\chi](x,s,p)\,ds|
\to 0, 
\end{gather*}
locally uniformly in $(0,T)$ as $n\to\infty$. 
Moreover, the $c^{n}$'s 
are Lipschitz continuous with respect to $x$ variable 
with the constant $L_{c}$ and 
bounded (independently of $n$). 
Let $u_{n}$ be the viscosity solutions of 
\begin{equation*}
\left\{
\begin{aligned}
&
u_t-c^{n}(x,t,Du)|Du|
-\tr(\sig(Du)\sig^{T}(Du)D^{2}u)
=0 
& \textrm{in} \ \Q, \\
&u(\cdot,0)=u_0
& \textrm{in} \ \Q, 
\end{aligned}
\right.
\end{equation*}
for all $n\in\N$. 
By Example \ref{ex-2} and 
Proposition  \ref{regularity}, 
%Propositions \ref{regularity1}, \ref{regularity2},
we have 
\begin{equation}\label{propun}
\begin{aligned}
& |u_{n}(x,t)|\le 1 \ \textrm{on} \ \cQ, \quad 
u_{n}(x,t)=-1 \ \textrm{on} \ (\R^N\setminus B(0,R(t)))\times[0,T], \\
& |u_{n}(x,t)-u_{n}(y,t)|\le C|x-y|, \quad 
|u_{n}(x,t)-u_{n}(x,s)|\le C|t-s|^{1/2},
\end{aligned}
\end{equation}
for all $x,y\in\R^N$, $t,s\in[0,T]$ 
and some $C>0$, 
where $R(t)$ is the function introduced in Example~\ref{ex-2}. 
By Ascoli-Arzel\'{a} theorem, the uniqueness of 
$L^{1}$-viscosity solutions of \eqref{e-3} 
with $H[\chi]=H_{d}[\chi]$
and \cite[Theorem 1.1]{B}, 
we have 
$u_n\to u$ locally uniformly on $\cQ$ and 
$u$ still satisfies properties~\eqref{propun}.

The above mollification argument can be an alternative way of getting the approximation property we need (cf. (H6)) by a regularization of $H[\chi]$ by $H[\chi](x,\cdot,p,X)\ast\zeta_{n}(t)$ instead of $H[\chi\ast\zeta_{n}(t)](x,t,p,X)$. 
\end{proof}

%%%%%%%%%%%%%%%%%%%%%%%%%%%%%%%%%%%%%%%%%%%%%%%
%%%%%%%%%%%%%%%%%%%%%%%%%%%%%%%%%%%%%%%%%%%%%%%
%%%%%%%%%%%%%%%%%%%%%%%%%%%%%%%%%%%%%%%%%%%%%%%
\subsection{A FitzHugh-Nagumo Type System}

We consider the following system: 
\begin{numcases}
{}
u_t=\Bigl(\al(v)+\Div\bigl(\frac{Du}{|Du|}\bigr)\Bigr)|Du| 
& in $\Q$, \nonumber \\
v_t-\Del v=
g^{+}(v)\mathbf{1}_{\{u\ge0\}}+
g^{-}(v)(1-\mathbf{1}_{\{u\ge0\}}) 
& in $\Q$,\label{FN asymp} \\
u(\cdot,0)=u_{0}, \ 
v(\cdot,0)=v_{0}
& in $\R^N$, \nonumber
\end{numcases}
which is obtained as the asymptotic as $\ep\to0$ of 
the following Fitzhugh-Nagumo system 
arising in neural wave propagation or chemical kinetics 
(see \cite[Theorem 4.1]{SS}): 
\begin{equation}\label{FN system}
\left\{
\begin{aligned}
u_{t}^{\ep}-\Del u^{\ep} 
& = \frac{1}{\ep^{2}}f(u^{\ep}, v^{\ep}) 
& \textrm{in} \ \Q, \\
v_{t}^{\ep}-\Del v^{\ep} 
& = g(u^{\ep}, v^{\ep}) 
& \textrm{in} \ \Q, 
\end{aligned}
\right.
\end{equation}
where 
\begin{equation*}
\left\{
\begin{aligned}
f(u,v)&=
u(1-u)(u-a)-v 
& {\rm (}0<a<1{\rm )}, \\
g(u,v)&=
u-\gam v 
& {\rm (}\gam>0{\rm )}. 
\end{aligned}
\right.
\end{equation*}
The functions 
$\al, g^{+}$ and $g^{-}:\R\to\R$ appearing in 
\eqref{FN asymp} are associated with $f$ and $g$. 

We make the following assumptions (B): 
\begin{itemize}
\item[(i)]
the function $v_{0}$ is bounded and of class $C^{1}$ with 
$\|Dv_{0}\|_{\infty}<+\infty$; 

\item[(ii)]
the functions 
$g^{-},g^{+}$ are Lipschitz continuous on $\R$ 
with a Lipschitz constant $L_{g}\ge0$ and 
there exist $\ul{g}, \ol{g}\in\R$ such that 
\[
\ul{g}\le g^{-}(r)\le g^{+}(r)\le\ol{g} \quad 
\textrm{for all} \ r\in\R; 
\]

\item[(iii)]
$|\al(r)-\al(s)|\le L_{\al}|r-s|$, 
$|\al(r)|\le M_{\al}$ 
for all $r,s\in\R$ and 
some $L_{\al}, M_{\al}>0$. 
\end{itemize}

For $\chi\in\Li(\cQ,[0,1])$, 
we write 
$v$ for the solution of 
\begin{equation}\label{diffusion part}
\left\{
\begin{aligned}
&
v_{t}-\Del v 
= g^{+}(v)\chi+
g^{-}(v)(1-\chi) 
&& \textrm{in} \ \Q, \\
&
v(\cdot,0)=v_{0}
&& \textrm{in} \ \R^{N}, 
\end{aligned}
\right.
\end{equation}
and set 
$c[\chi](x,t):=\al (v(x,t))$.  
Then Problem \eqref{FN asymp} reduces to 
\begin{equation}\label{FN nonlocal}
\left\{
\begin{aligned}
&
u_{t}-\Bigl(c[\mathbf{1}_{\{u\ge0\}}](x,t)  
+\Div\bigl(\frac{Du}{|Du|}\bigr)\Bigl)|Du(x,t)|
=0
&& \textrm{in} \ \Q,  \\
&u(\cdot,0)=u_0
&& \textrm{in} \ \R^{N}, 
\end{aligned}
\right.
\end{equation}
which is a particular case of \eqref{e-1}. 

%%%%%%%%%%%
\begin{thm}\label{FN thm}
Under assumptions {\rm (B)}, 
{\rm (I1)} and {\rm (I2)}, 
the initial value problem 
\eqref{FN nonlocal} 
has at least a weak solution in $\cQ$. 
Moreover, it is classical and unique 
in $\R^{N}\times[0,t^{\ast}]$ 
for some $t^{\ast}$ 
which depends only on 
$L_{\al}, M_{\al}, \del_0, \eta_0, R_0,$  $\|Du_{0}\|_{\infty}$
and $\|\nu\|_{\infty}$.  
\end{thm}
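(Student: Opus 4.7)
The plan is to mirror the strategy used for Theorem \ref{dislocation thm}: existence of a weak solution will be invoked from the companion paper \cite{BCLM3}, while the short-time uniqueness and classical character of weak solutions will follow from Theorem \ref{uniqueness}, provided we check that $H_{FN}[\chi](x,t,p,X) := -c[\chi](x,t)|p| - \tr\bigl((I-\hat{p}\otimes\hat{p})X\bigr)$ satisfies (H1)--(H4), (H5-(ii)) and (H6).

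First I would establish the regularity of $v = v[\chi]$: using Duhamel's formula for \eqref{diffusion part}, the boundedness of $g^\pm$ provided by (B)(ii) yields $\|v\|_\infty \le C(T,\|v_0\|_\infty,\ul{g},\ol{g})$ by the maximum principle, and the standard estimate $\int_{\R^N}|\nabla_x G(x-y,t-s)|dy\le C(t-s)^{-1/2}$ combined with $\|Dv_0\|_\infty < +\infty$ shows that $v(\cdot,t)$ is Lipschitz in $x$ with a constant uniform in $\chi$ and $t\in[0,T]$. Consequently $c[\chi]=\al\circ v$ is bounded by $M_\al$ and uniformly Lipschitz in $x$. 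At this point $H_{FN}[\chi]$ falls into the framework of Example \ref{ex-2} (taking $\cA,\cB$ reduced to points, $c^{\al,\beta}=c[\chi]$ independent of $p$, and $\sig^{\al,\beta}(p) = I - \hat p\otimes \hat p$), so (H1)--(H4) follow uniformly in $\chi$; the condition $u[\chi]\equiv -1$ outside a ball comes from the explicit barrier $f$ constructed there.

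The key step is to verify (H5-(ii)). Given $\chi_1,\chi_2\in L^\infty(\cQ,[0,1])$ with corresponding $v_1,v_2$, the difference $w := v_1 - v_2$ satisfies $w(\cdot,0)=0$ and, by Duhamel,
\[
w(x,t) = \int_0^t\!\!\int_{\R^N} G(x-y,t-s)\bigl[F[\chi_1,v_1] - F[\chi_2,v_2]\bigr](y,s)\,dy\,ds,
\]
where $F[\chi,v] = g^+(v)\chi + g^-(v)(1-\chi)$. Splitting $F[\chi_1,v_1]-F[\chi_2,v_2]$ into a term depending on $v_1-v_2$ (bounded by $L_g|w|$) and a term depending on $\chi_1-\chi_2$ (bounded by $(\ol g - \ul g)|\chi_1-\chi_2|$), I would obtain
\[
|w|(x,t) \le C\,\ol{\kap}_{\chi_1,\chi_2}(x,t) + L_g \int_0^t\!\!\int_{\R^N} G(x-y,t-s)|w|(y,s)\,dy\,ds.
\]
Iterating this inequality and using the semigroup identity $\int G(x-y,t-s)G(y-z,s-\tau)\,dy = G(x-z,t-\tau)$ together with the monotonicity of $t\mapsto \ol{\kap}_{\chi_1,\chi_2}(x,t)$, one checks that the $n$-th iterated convolution kernel acting on $\ol{\kap}_{\chi_1,\chi_2}(x,t)$ grows at most like $t^n/n!$, hence the Neumann series converges and gives $|w|(x,t) \le C_T\, \ol{\kap}_{\chi_1,\chi_2}(x,t)$. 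The Lipschitz character of $\al$ from (B)(iii) then yields $|c[\chi_1]-c[\chi_2]|(x,t) \le L_\al C_T\, \ol{\kap}_{\chi_1,\chi_2}(x,t)$, which combined with the quasilinear estimates of Example \ref{ex-2} for the curvature part gives (H5-(ii)).

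For (H6), the time-averages $\chi_n(x,t) := n\int_t^{t+1/n}\chi(x,s)\,ds$ produce coefficients $c_n := \al(v[\chi_n])$ that are continuous in $t$ (by parabolic regularity of $v[\chi_n]$ and $\chi_n \to \chi$ in $L^1_{\loc}$), so $H_n$ satisfies (A1)--(A4), and applying Theorem \ref{continuous dependence} as in Example \ref{ex-1} yields $u[\chi_n]\to u[\chi]$ uniformly on $\cQ$. Once (H1)--(H6) are checked, Theorem \ref{uniqueness} gives short-time uniqueness on $[0,t^\ast]$ (with $t^\ast$ depending only on the quantities listed in the statement through the construction of $\ol{t}$ in Theorem \ref{key estimate}), and Proposition \ref{weak is classical} provides the classicality. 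The main obstacle is the Gronwall-type iteration in (H5-(ii)): we must preserve the heat-kernel convolution structure on the right-hand side rather than losing it to an $L^\infty$-in-space bound, since otherwise the resulting estimate would not match the form required in (H5-(ii)).
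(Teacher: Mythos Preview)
Your proposal is correct and follows essentially the same route as the paper: existence is imported from \cite{BCLM3}, hypotheses (H1)--(H4) are reduced to the framework of Example~\ref{ex-2}, and (H5-(ii)) is obtained from a Duhamel/Gronwall argument on $v_1-v_2$ --- the paper simply cites \cite[Theorem~4.1]{BCLM2} for this last step, whereas you spell it out. One small caution: the map $t\mapsto \ol{\kap}_{\chi_1,\chi_2}(x,t)$ is not monotone in general, but you do not actually need this --- the semigroup identity and Fubini alone give the factor $(t-\tau)^n/n!$ in the iterated terms.
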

%%%%%%%%%%%

\begin{proof}
The existence of weak solutions is proved 
in \cite[Theorem 3.4]{BCLM3}. 
See also \cite{GGI,SS}. 
It is easy to check that 
(H2) and (H3) are satisfied. 
Due to similar arguments as those in the proof of 
Theorem \ref{dislocation thm}, 
we see that (H1) and (H4) are satisfied. 
We prove that 
(H5-(ii)) is satisfied. 
For $\chi_{1}, \chi_{2}\in \Li(\cQ,[0,1])$, 
the solutions of \eqref{diffusion part} are given by 
\begin{align*}
v_{i}(x,t)=\,&
\int_{\R^N}G(x-y,t)v_{0}(y)\,dy \, + \\
{}\, &\int_{0}^{t}\int_{\R^N}G(x-y,t-s)
(g^{+}(v_{i})\chi_{i}(y,s)
+g^{-}(v_{i})(1-\chi_{i}(y,s)))\,dyds 
\end{align*}
for $i=1,2$. 
By the proof of \cite[Theorem 4.1]{BCLM2}, we have 
\[
|c[\chi_{1}](x,t)-c[\chi_{2}](x,t)|\le 
L_{\al}(\ol{g}-\ul{g})e^{3L_{g}T}
\int_{0}^{t}\int_{\R^N}
G(x-y,t-s)
|\chi_{1}(y,s)-\chi_{2}(y,s)|\,dyds 
\]
for all $(x,t)\in\cQ$. 
This completes the proof. 
\end{proof}

%%%%%%%%%%%%%%%%%%%%%%%%%%%%%%%%%%%%%%%%%%%%%%%
%%%%%%%%%%%%%%%%%%%%%%%%%%%%%%%%%%%%%%%%%%%%%%%
%%%%%%%%%%%%%%%%%%%%%%%%%%%%%%%%%%%%%%%%%%%%%%%
\subsection{Nonlocal equations with volume-dependent terms}
We consider the following evolution of hypersurfaces: 
\begin{equation}\label{measure surface}
V=
\beta(\cL^{N}(\ol{\Om}_{t}))
-\Div_{\Gam_{t}}(n(x)) 
\quad\textrm{on} \ 
\Gam_{t}, 
\end{equation}
where the function $\beta:\R\to\R$ is 
Lipschitz continuous. 
A typical example is 
$\beta(r)=a+br$ for some $a,b\in\R$ 
which has been  studied by Chen, Hilhorst and Logak in 
\cite{CHL} (see \cite{C, CP} also). 
The authors prove that 
the limiting behaviour of 
the following reaction-diffusion equation 
\begin{numcases}
{}
u_t=\Del u+\frac{1}{\ep^{2}}f(u,\ep\int_{\Om}u) 
& in $\Om\times(0,\infty)$, \nonumber \\
\frac{\pl u}{\pl n}=0 
& on $\Om\times(0,\infty)$, \nonumber\\
u(\cdot,0)=g^{\ep} 
& in $\Om$ \nonumber
\end{numcases}
is characterized by the motion of hypersurface 
\eqref{measure surface}
with $\beta(r)=a+br$. 
The level set equation of \eqref{measure surface} 
is the nonlocal equation:
\begin{equation}\label{m-2}
\left\{
\begin{aligned}
&
u_t
-\Bigl(\beta\bigl(\cL^{N}(\{u(\cdot,t)\ge0\})\bigr)
+\gam\,\Div\bigl(\frac{Du}{|Du|}\bigr)\Bigl)|Du|=0
&&
\textrm{in} \ \Q, \\
&
u(\cdot,0)=u_0
&&
\textrm{in} \ \R^{N}, 
\end{aligned}
\right.
\end{equation}
for some positive constant $\gam.$  

It is worthwhile to mention that 
we cannot expect the global existence of weak solutions 
without any restriction of the growth of $\beta$, 
because the front can blow up at a finite time. 
A growth condition to ensure a global existence result
is given below in (C-(ii)), see
also \cite[Section 4.2]{BCLM3}.

We can easily check that the equation \eqref{m-2} 
satisfies assumptions (H1)-(H4) and (H5-(i)). 
So, the following result holds. 
%%%%%%%%%%
\begin{thm}
Under assumptions 
{\rm (I1)} and {\rm (I2)}, 
for any $\gam, R>0$, there exists 
a constant $t_{R}\in(0,T]$ and 
at least a weak solution 
$(u,\chi)\in C(\cQ)\times\Li(\cQ,[0,1])$
of the initial value problem for 
\eqref{m-2} 
such that 
$\{x\in\R^N\mid 
u(x,t)\ge0\}\subset B(0,R)$ 
for any $t\in[0,t_{R}]$. 
Moreover, 
the weak solution is classical and unique 
in $\R^{N}\times[0,t^{\ast}]$   
for some $t^{\ast}\in(0,t_{R}]$ 
which depends only on 
the Lipschitz constant of $\beta$, 
$R$, $N$, $\del_0, \eta_0, R_0,$  $\|Du_{0}\|_{\infty}$
and $\|\nu\|_{\infty}$.  
\end{thm}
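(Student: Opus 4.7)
The plan is to realize \eqref{m-2} as an instance of \eqref{e-1} with the Hamiltonian
\[
H[\chi](x,t,p,X) := -\beta\Bigl(\textstyle\int_{\R^N}\chi(y,t)\,dy\Bigr)|p| - \gamma\,\tr\bigl((I-\hat p\otimes\hat p)X\bigr),
\]
and then to verify the hypotheses of Theorem~\ref{uniqueness}. Short-time existence of a weak solution with the localization property $\{u(\cdot,t)\ge 0\}\subset B(0,R)$ is obtained by a Schauder-type fixed-point argument in the spirit of \cite[Section~4.2]{BCLM3}: restricting attention to $\chi\in L^{\infty}(\cQ;[0,1])$ whose support at each time lies in $\overline{B}(0,R),$ the quantity $\int\chi(\cdot,t)\,dy$ is bounded by $\cL^{N}(B(0,R)),$ so $|\beta(\int\chi)|$ is uniformly bounded by some $M_\beta=M_\beta(R,\beta).$ A barrier of the type used in Example~\ref{ex-2}, with $R(t):=M_\beta t+R_0+\sqrt{2},$ controls the $0$-superlevel set of the associated local solution; choosing $t_R>0$ so that $M_\beta t_R+R_0+\sqrt{2}\le R$ stabilizes the admissible class, and a standard stability/compactness argument then produces a weak solution in the sense of Definition~\ref{def weak sol}.

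For uniqueness I apply Theorem~\ref{uniqueness}, so the task is to verify (H1)--(H6). Properties (A2)--(A3) and (A5) are immediate from the structure; (H2) follows since $t\mapsto\int\chi(\cdot,t)\,dy$ is continuous for $\chi\in C([0,\tau];L^{1}(\R^N));$ (H3) holds because the mean-curvature term is Lipschitz in the Hessian; (H4) is established exactly as in Example~\ref{ex-2} upon noting that the driving coefficient $\beta(\int\chi)$ is $x$-independent, so it is trivially compatible with the change of variables $\psi_\lam;$ (H1) is delivered by the barrier constructed above, with $R_T$ slightly larger than $R;$ and (H6) is handled by convolving $\chi$ in time, exactly as done at the end of the proof of Theorem~\ref{dislocation thm}. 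The only step specific to this application is (H5-(i)): using the doubling-variable bound $|p|\le M$ together with the Lipschitz continuity of $\beta,$ one has
\[
\Bigl|\beta\bigl(\textstyle\int\chi_1\bigr)-\beta\bigl(\int\chi_2\bigr)\Bigr|\,|p|\;\le\;L_\beta\,M\int_{\R^N}|\chi_1-\chi_2|(y,t)\,dy,
\]
which fits \eqref{assump main ineq} with a kernel $\kappa_{\chi_1,\chi_2}$ that is even independent of $x;$ the mean-curvature term contributes nothing extra since it is identical in $H[\chi_1]$ and $H[\chi_2].$

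Theorem~\ref{uniqueness} then yields uniqueness on $[0,\bar t\wedge t_R],$ where $\bar t$ is the horizon furnished by the key estimate Theorem~\ref{key estimate}, and Proposition~\ref{weak is classical} upgrades the weak solution to a classical one on the same interval; one sets $t^\ast:=\bar t\wedge t_R.$ The quantitative dependence of $t^\ast$ on the parameters listed in the statement is traced back through $C_H$ (into which $L_\beta,$ $R$ via $M_\beta,$ and $\gamma$ enter), $M_\beta,$ and the constants controlling the key estimate. The main obstacle I foresee is the bookkeeping in the localization/fixed-point step: one must identify a compact convex subset of $C([0,t_R];L^{1}(\overline{B}(0,R)))$ that is preserved by the map $\chi\mapsto\mathbf{1}_{\{u[\chi]\ge 0\}}$ and on which this map is continuous in the $L^{1}$-topology, using the uniform Lipschitz-in-$x$ and H\"older-in-$t$ estimates of Proposition~\ref{regularity} to get compactness. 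Once this is in place, the verification of (H1)--(H5-(i)) is modelled closely on the dislocation and FitzHugh-Nagumo cases and is essentially mechanical.
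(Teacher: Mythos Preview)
Your proposal is correct and follows essentially the same route as the paper: the paper simply states ``We can easily check that the equation \eqref{m-2} satisfies assumptions (H1)--(H4) and (H5-(i))'' and invokes Theorem~\ref{uniqueness}, while you have spelled out precisely those verifications (and the existence step, which the paper defers to \cite{BCLM3}). Your handling of (H5-(i)) via the Lipschitz continuity of $\beta$ and the bound $|p|\le M$ is exactly what is intended, and the remark that the mean-curvature term only contributes the standard $|x-y|^4/\ep^4$ and $\rho\|A^2\|$ terms (as in Example~\ref{ex-1}) is correct.
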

%%%%%%%%%

Now, adding the assumption (C): 
\begin{itemize}
\item[(i)]
the assumption (I2) holds with 
$\nu(x)=-x$ in $U_0$, 

\item[(ii)]
there exist $L_1, L_2>0$ such that 
\[
0\le\beta(r)\le L_{1}+L_{2}r^{1/N} 
\quad\textrm{for all} \ 
r\in[0,\infty), 
\]
\end{itemize}
we can show a global existence and uniqueness result 
of solutions of \eqref{m-2} for small $\gam$. 
More precisely, we get 
%%%%%%%%%
\begin{thm}\label{measure global thm}
Under assumptions
{\rm (I1)}, {\rm (I2)} and {\rm (C)}, 
there exists a positive constant 
$\ol{\gam}=\ol{\gam}(N,T,\eta_{0}/\|Du_{0}\|_{\infty})$ 
such that, for any $0\le \gam\le\ol{\gam}$, 
there exists a unique viscosity solution of 
\eqref{m-2} in $\cQ$. 
\end{thm}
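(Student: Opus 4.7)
The overall strategy is to extend the preceding short-time uniqueness result on the full interval $[0,T]$ by iteration, for which the two quantities that must remain controlled uniformly in time are the size of the support of the front and the lower-gradient constant $\eta$.

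\textbf{Step 1 (A priori bound on the front).} First I would show that $\Omega_t := \{u(\cdot,t)\ge 0\}$ stays inside a ball $B(0,R_T)$, with $R_T$ depending only on $R_0$, $T$, $L_1$, $L_2$. Since $\beta\ge 0$, the velocity in \eqref{m-2} drives the front outward, and assumption (C)(ii) yields
\[
\beta(\mathcal{L}^N(\Omega_t))\le L_1+L_2\,\mathcal{L}^N(\Omega_t)^{1/N}\le L_1+L_2\,\omega_N^{1/N}R(t),
\]
where $R(t)$ is the radius of the smallest ball containing $\Omega_t$ and $\omega_N=\mathcal{L}^N(B(0,1))$. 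Using the constant-velocity shrinking ball as a supersolution barrier from outside (built via the same techniques as Example \ref{ex-2}), a Gronwall-type argument for $R(t)$ gives a uniform a priori bound $R(t)\le R_T$ on $[0,T]$. This in particular provides an $R_T$ valid in assumption (A8) and in (H1) all the way up to time $T$.

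\textbf{Step 2 (Global preservation of (I2)).} The key geometric point is that, by (C)(i), assumption (I2) with $\nu(x)=-x$ is equivalent to saying that $\Omega_0$ is star-shaped with respect to a ball centered at the origin (Lemma \ref{I2geom}(i)). Under pure expansion (the $\gamma=0$ motion with $V=\beta\ge 0$) this star-shapedness is preserved, so that the dilation $\psi_\lambda(x)=(1-\lambda)x$ still increases $u$ by a definite amount. For $\gamma>0$ the curvature term is a perturbation: setting $v(x,t):=u((1-\lambda)x,t)$ and using the geometric character of the equation (A5), $v$ solves \eqref{m-2} with the $\beta$-velocity multiplied by $1/(1-\lambda)\ge 1$ and the curvature coefficient by $1/(1-\lambda)^2$. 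Comparing $v$ with $u+\lambda\eta^{*}$ via the continuous dependence estimate (Theorem \ref{continuous dependence}) adapted to the nonlocal setting (as in the proof of Theorem \ref{key estimate}), the favorable sign of the $\beta$-perturbation partly compensates the curvature perturbation, and the overall error is of order $\gamma\lambda/(1-\lambda)^2$ times $\|Du\|_\infty$. A quantitative smallness condition on $\gamma$ in terms of $N$, $T$ and $\eta_0/\|Du_0\|_\infty$ then ensures that
\[
u((1-\lambda)x,t)\ge u(x,t)+\lambda\eta^{*}\qquad \text{for all }t\in[0,T],
\]
with a uniform $\eta^{*}>0$. Via Corollary \ref{lower bound}, this upgrades to a uniform lower-gradient bound on the evolving front for all $t\in[0,T]$.

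\textbf{Step 3 (Iteration).} Combined with Proposition \ref{regularity} (which controls $\|Du(\cdot,t)\|_\infty$ via $e^{Kt}$), Steps 1 and 2 give uniform constants $R_T$, $\eta^{*}$, $\|Du(\cdot,t)\|_\infty$ on $[0,T]$. Consequently, the short-time uniqueness result from the preceding theorem (specialized to \eqref{m-2}) can be applied with a time step $\tau>0$ that depends only on these uniform constants and is therefore bounded below independently of the starting time. Iterating on $[k\tau,(k+1)\tau]$ and patching together classical (in the sense of Definition \ref{def weak sol}) solutions yields the unique viscosity solution on all of $\mathcal{Q}$.

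\textbf{Main obstacle.} The real work is Step 2: rigorously turning the heuristic that ``small $\gamma$ makes the curvature a controlled perturbation of a star-shaped-preserving motion'' into a uniform-in-time inequality. This requires (a) handling the nonlocal coupling through the volume functional $\mathcal{L}^N(\{u(\cdot,t)\ge 0\})$ in the comparison between $v$ and $u+\lambda\eta^{*}$, which one treats by the approximation argument of Remark \ref{appr-arg} together with (H5-(i)) to close the fixed-point loop, and (b) tracking how the perturbative error depends on $\gamma$, $\lambda$ and the initial ratio $\eta_0/\|Du_0\|_\infty$, which is what dictates the admissible range $\gamma\in[0,\bar\gamma]$ with $\bar\gamma=\bar\gamma(N,T,\eta_0/\|Du_0\|_\infty)$.
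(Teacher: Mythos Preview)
Your strategy matches the paper's. The paper isolates Step~2 as a separate lemma (Lemma~\ref{measure key estimate}) for the frozen local equation $u_t=c(t)|Du|+\gamma\,\mathrm{tr}((I-\widehat{Du}\otimes\widehat{Du})D^2u)$ with $c(t)\ge 0$, and the computation is exactly what you describe: with $\nu(x)=-x$ the first-order perturbation in the doubling argument is $c(\bar t)\bigl(1-\tfrac{1}{1-\lambda}\bigr)|p|\le 0$ and is simply dropped (it does not ``partly compensate'' the curvature term, it just disappears), leaving only the second-order error. Two small corrections to your bookkeeping: after optimizing in the penalization parameter the curvature error comes out as $C\|Du_0\|_\infty\sqrt{\gamma t}\,\lambda$, not $O(\gamma\lambda)$, which is precisely what gives $\bar\gamma\sim T^{-1}(\eta_0/\|Du_0\|_\infty)^2$; and your obstacle~(a) is not actually present in Step~2, since $\beta(\mathcal L^N(\{u(\cdot,t)\ge 0\}))$ depends only on $t$ and is therefore identical for $u((1-\lambda)x,t)$ and $\Psi(u(x,t)+\lambda\eta_0)$ --- the nonlocal coupling only enters in Step~3, where (H5-(i)) handles it as you say.
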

%%%%%%%%%%

%%%%%%%%%%
\begin{lem}\label{measure key estimate}
Let $u$ be the viscosity solution of 
\begin{numcases}
{}
u_t=c(t)|Du|
+\gam\,\tr\big((I-\frac{Du\otimes Du}{|Du|^2})D^2u\big) 
& in $\Q$,\nonumber \\
u(\cdot,0)=u_0 & in $\R^N$, \nonumber
\end{numcases}
where 
$\gam$ is a positive constant and 
$c\in C([0,T])$ is a nonnegative given function. 
Then we have, 
for any $(x,t)\in\{|u(\cdot,t)|\le\del_{0}/4\}\times[0,T]$, 
\[
u((1-\lam) x,t)\ge 
u(x,t)+(\eta_{0}-C\|Du_{0}\|_{\infty}\sqrt{\gam t})\lam 
\quad 
\textrm{for all} \ 
\lam\in[0, \ol{\lam}], 
\]
where $C$ is a positive constant which depends only on $N$ and $H$
and $\ol{\lam}$ is the constant 
given by Lemma {\rm \ref{psi}}
{\rm (}We may assume that $\ol{\lam}\le1/2${\rm )}.
\end{lem}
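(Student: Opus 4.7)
The proof will adapt the strategy of Theorem \ref{key estimate} to exploit the special structure of this equation: the velocity $c$ depends only on $t$ (not on $x$), the second-order term is the pure mean-curvature operator (so the ``$\sigma$'' is independent of $x$), and the rescaling $\psi_\lam(x)=x+\lam\nu(x)$ reduces to the simple dilation $(1-\lam)x$ since $\nu(x)=-x$.

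First, using (I2) with $\nu(x)=-x$ and invoking Lemma \ref{psi} as in the proof of Theorem \ref{key estimate}, I would extend the initial inequality $u_0((1-\lam)x)\ge u_0(x)+\lam\eta_0$ from $U_0$ to $\R^N$ via the nondecreasing regularization $\Psi$, giving $u_0((1-\lam)x)\ge \Psi(u_0(x)+\lam\eta_0)$ on $\R^N$ for $\lam\in[0,\ol\lam]$. Then I would introduce the two auxiliary functions
\begin{equation*}
v(x,t):=u((1-\lam)x,t), \qquad w(x,t):=\Psi(u(x,t)+\lam\eta_0).
\end{equation*}
A direct chain-rule computation, using that the mean-curvature operator satisfies $\mathcal{K}[u]=\tr((I-\hat{Du}\otimes\hat{Du})D^2u)$ and the identity $(I-\hat{Du}\otimes\hat{Du})Du=0$, shows that $w$ solves the same equation as $u$ (geometric invariance under the smooth nondecreasing transformation $\Psi$), while $v$ solves the rescaled equation
$v_t=\frac{c(t)}{1-\lam}|Dv|+\frac{\gam}{(1-\lam)^2}\tr((I-\hat{Dv}\otimes\hat{Dv})D^2v).$

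The core step is to compare $w$ (subsolution of the unperturbed Hamiltonian $H_1$) with $v$ (supersolution of the rescaled Hamiltonian $H_2$) via the continuous dependence estimate of Example \ref{ex-1}. Writing both Hamiltonians in the Bellman--Isaacs form with $c_1(t)=c(t)$, $c_2(t)=c(t)/(1-\lam)$, $\sigma_1(p)=\sqrt{\gam}(I-\hat p\otimes\hat p)$, $\sigma_2(p)=\frac{\sqrt\gam}{1-\lam}(I-\hat p\otimes\hat p)$, we get (for $\lam\leq \ol{\lam}\le 1/2$)
\begin{equation*}
\|c_1-c_2\|_\infty\le 2\|c\|_\infty\lam,\qquad \|\sigma_1-\sigma_2\|_\infty\le 2\sqrt\gam\,\lam.
\end{equation*}
Crucially, since $\sigma$ is $x$-independent, the $L|x-y|$ spatial-variation term in Example \ref{ex-1} vanishes, and the continuous dependence inequality of Theorem \ref{continuous dependence} yields
\begin{equation*}
\sup_x (w-v)(x,t)\le \sup_x(w-v)(x,0)+M_1\lam\bigl(\|c\|_\infty t+\sqrt{\gam t}\bigr),
\end{equation*}
where $M_1$ depends only on $N$, $\|c\|_\infty$, $\gam$, and the Lipschitz constant of the solutions, which by Proposition \ref{regularity} is controlled by $\|Du_0\|_\infty e^{KT}$.

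By Lemma \ref{psi} we have $v(x,0)\ge w(x,0)$ on $\R^N$, so the right-hand side above reduces to $M_1\lam(\|c\|_\infty t+\sqrt{\gam t})$. Finally, on the set $\{|u(\cdot,t)|\le\del_0/4\}$ and for $\lam\eta_0\le\del_0/4$, the map $\Psi$ is the identity on the relevant interval, so $w(x,t)=u(x,t)+\lam\eta_0$ there. Rearranging and absorbing $\|c\|_\infty t$ together with the Lipschitz-constant dependence into a single constant $C$ (which then depends on $N$, $\|c\|_\infty$, and through $M_1$ on $\|Du_0\|_\infty$), we obtain the claimed bound $u((1-\lam)x,t)\ge u(x,t)+(\eta_0-C\|Du_0\|_\infty\sqrt{\gam t})\lam$.

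The main technical obstacle is the sharp bookkeeping in the continuous dependence step: one must check that (A4) for the pair $(H_1,H_2)$ holds with the precise scalings $\kap_1\sim\lam$ (from the velocity mismatch) and $\kap_2\sim\gam\lam^2$ (from the diffusion mismatch), in order to extract from $\sqrt{\kap_2 t}$ exactly the desired factor $\sqrt{\gam t}\,\lam$. This is precisely what the $x$-independence of the coefficients buys us, eliminating spatial cross-terms that would otherwise corrupt the $\sqrt{\gam t}$ scaling.
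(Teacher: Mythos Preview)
Your approach is almost the paper's, but there is a real gap in the final step. You obtain
\[
\sup_x(w-v)(x,t)\le M_1\lam\bigl(\|c\|_\infty t+\sqrt{\gam t}\bigr),
\]
and then claim to ``absorb $\|c\|_\infty t$ \dots\ into a single constant $C$'' so that the right-hand side becomes $C\|Du_0\|_\infty\sqrt{\gam t}\,\lam$. This absorption is impossible: the term $\|c\|_\infty t$ is independent of $\gam$, while $\sqrt{\gam t}\to 0$ as $\gam\to 0$. No constant $C$ (depending on $N,H,\|c\|_\infty,\|Du_0\|_\infty$ or anything else fixed) can make $\|c\|_\infty t\le C\|Du_0\|_\infty\sqrt{\gam t}$ hold uniformly for small $\gam$. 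In particular, your bound does not yield the stated inequality, and it would break the application to Theorem~\ref{measure global thm}, whose whole point is that the error vanishes as $\gam\to 0$ uniformly on $[0,T]$.

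The missing idea, which the paper stresses explicitly, is to \emph{use the nonnegativity of $c(t)$}. Instead of invoking Theorem~\ref{continuous dependence} as a black box, the paper redoes the doubling-of-variables argument by hand. At the maximum point the two viscosity inequalities give
\[
\tilde K \le c(\ol t)\Bigl(1-\frac{1}{1-\lam}\Bigr)|p|
+\gam\,\tr\Bigl((I-\hat p\otimes\hat p)\bigl(X-\tfrac{1}{(1-\lam)^2}Y\bigr)\Bigr).
\]
Since $c\ge 0$ and $1-\tfrac{1}{1-\lam}=-\tfrac{\lam}{1-\lam}\le 0$, the first term is \emph{nonpositive} and is discarded entirely. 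Only the diffusion mismatch survives, and the matrix inequality yields a bound of size $C\gam\lam^2|\ol x-\ol y|^2/\ep^4$, from which the optimization in $\ep$ produces exactly $C\|Du_0\|_\infty\sqrt{\gam t}\,\lam$, with $C$ depending only on $N$. Your black-box use of (A4) treats $\|c_1-c_2\|_\infty$ in absolute value and loses this sign, which is the crucial point.
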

%%%%%%%%%%%%

The proof of Lemma \ref{measure key estimate} is very similar 
to that of Theorem \ref{key estimate}, but 
since we would like to explain 
how to use the positiveness of $c(t)$ and 
note the dependence of $C$ in Lemma~\ref{measure key estimate}, 
we give it here. 
 
\begin{proof}
Let $\Psi$ be the function given by Lemma \ref{psi} 
and 
fix $\lam\in[0,\ol{\lam}]$. 
Setting 
$v(x,t):=u((1-\lam) x,t)$ 
and 
$w(x,t):=\Psi(u(x,t)+\eta_{0}\lam)$ 
for all $(x,t)\in\cQ$, 
we consider 
\[
\sup_{x,y\in\R^N,t\in[0,T]}
\{w(x,t)-v(y,t)-
\frac{|x-y|^4}{\ep^4}-\tilde{K}t\}. 
\]
for $\ep,\tilde{K}>0$.  
By similar arguments as those used 
in Theorem \ref{continuous dependence}, 
there exist $(\ol{x},\ol{y},\ol{t})\in\ol{B}(0,R_{T}+1)^{2}\times[0,T]$ 
for some $R_{T}>0$ and small enough $\ep>0$ 
such that the supremum attains at $(\ol{x},\ol{y},\ol{t})$ 
and 
$(a,p,X)\in \ol{J}^{2,+}v(\ol{x},\ol{t})$ and 
$(b,p,Y)\in \ol{J}^{2,-}w(\ol{y},\ol{t})$ 
satisfy \eqref{ishii-lem}. 
Note that the Lipschitz constant of $u$ is $\|Du_{0}\|_{\infty}$ 
in this case. 
We have 
\[
|\ol{x}-\ol{y}|\le\|Du_{0}\|_{\infty}^{1/3}\ep^{4/3}, 
\quad
|p|\le 4\|Du_{0}\|_{\infty}. 
\]

We only consider the case where $\ol{x}\not=\ol{y}$. 
The definition of viscosity solutions immediately implies 
that we have 
\begin{eqnarray*}
&&
a\le c(\ol{t})|p|
+\gam\tr\bigl((I-\dfrac{p\otimes p}{|p|^2})X\bigr), \\
&&
b\ge 
\dfrac{c(\ol{t})}{1-\lam}|p|
+\dfrac{\gam}{(1-\lam)^2}\tr\bigl((I-\dfrac{p\otimes p}{|p|^2})Y\bigr).  
\end{eqnarray*}
It follows
\begin{eqnarray*}
a-b=\tilde{K}\le 
c(\ol{t})(1-\dfrac{1}{1-\lam})|p|+
\gam 
\tr\bigl((I-\dfrac{p\otimes p}{|p|^2})(X-\dfrac{Y}{(1-\lam)^2})\bigr).
\end{eqnarray*}
We note that
$I-p\otimes p/|p|^2$ is a positive definite bounded matrix and
\begin{eqnarray*}
X-\frac{Y}{(1-\lam)^2}\leq \left(\frac{-\lam}{1-\lam}\right)^2 
\dfrac{|\ol{x}-\ol{y}|^{2}}{\ep^{4}}I
+\rho\sum_{i=1}^{N}
\big\langle 
A^{2} 
\left(
\begin{array}{c}
e_{i} \\
e_{i} 
\end{array}
\right)
, 
\left(
\begin{array}{c}
e_{i} \\
e_{i} 
\end{array}
\right) 
\big\rangle 
\end{eqnarray*}
by \eqref{matrix ineq}.
In view of the positiveness of $c(t),$
we get,
for some constant $C=C(H,N)>0$ which may change line to line, 
\begin{align*}
\tilde{K}&\le 
\dfrac{C\gam\lam^2}{(1-\ol{\lam})^2}
\dfrac{|\ol{x}-\ol{y}|^{2}}{\ep^{4}}
+C\rho\|A^{2}\|\\
{}&\le 
\frac{C\|Du_{0}\|_{\infty}^{2/3}\gam\lam^2}{\ep^{4/3}}
+C\rho\|A^{2}\|.
\end{align*}
Sending $\rho\to0$ and taking 
$\tilde{K}
=(C+1)\|Du_{0}\|_{\infty}^{2/3}\gam\lam^2\ep^{-4/3}$, 
we necessarily have $\ol{t}=0$.

Thus we have for any $(x,t)\in\cQ$ 
\begin{align*}
{}&\, 
w(x,t)-v(x,t)\le 
w(\ol{x},0)-v(\ol{y},0)+\tilde{K}t
\le 
v(\ol{x},0)-v(\ol{y},0)+\tilde{K}t\\
&\le\, 
e^{4KT/3}\|Du_{0}\|_{\infty}^{4/3}\ep^{4/3}
+(C+1)\|Du_{0}\|_{\infty}^{2/3}\gam\lam^2 t \ep^{-4/3}. 
\end{align*}
Setting 
\[
\ep^{4/3}=\Bigl(\dfrac{(C+1)\gam\lam^2 t}{e^{4KT/3}\|Du_{0}\|_{\infty}^{2/3}}
\Bigr)^{1/2},
\]
we get 
\[
w(x,t)-v(x,t)\le 
\tilde{C}
\|Du_{0}\|\sqrt{\gam t}\lam, 
\]
where $\tilde{C}$ depends only on $N,H.$ 
This implies a conclusion. 
\end{proof}

%%%%%%%%%%%%%%%%%%%%%%%%%%%%%%%%%%%
\begin{proof}[Proof of Theorem {\rm \ref{measure global thm}}]
In \cite[Section 4]{BCLM3}, 
the global existence result for weak solutions of 
\eqref{m-2} is given. 
Due to Lemma \ref{measure key estimate}, 
we see that 
if 
\[
\gam\le
\frac{1}{T}\Bigl(\frac{\eta_{0}}{2C\|Du_{0}\|_{\infty}}\Bigr)^2, 
\]
then 
\[
\eta_{0}-C\|Du_{0}\|_{\infty}\sqrt{\gam t}\ge \frac{\eta_{0}}{2} 
\quad \textrm{for all} \ 
t\in[0,T]. 
\]
Therefore, we get 
\begin{equation}\label{lower gradient lem}
u((1-\lam)x,t)\ge 
u(x,t)+\frac{\eta_{0}}{2}\lam 
\quad \textrm{for all} \ 
(x,t)\in\{|u(\cdot,t)|\le\del_{0}/4\}\times[0,T], 
\ \lam\in[0,\ol{\lam}].
\end{equation}
A careful review of the proof of Theorems~\ref{uniqueness} 
gives the conclusion. 
\end{proof}

%%%%%%%%%%%%%%%%%%%%%%%%%%%%%%%%%

\begin{rem}\label{star perimeter}
Inequality \eqref{lower gradient lem} implies that 
the $r$-level set of $u,$ for $r$ close to 0, are star-shaped domains
with respect to a ball 
with center $0,$ see Lemma~\ref{I2geom}.
In particular, they are locally Lipschitz continuous graphs.  
Then we can get perimeter estimates without 
using Lemma \ref{perimeter}. 
Indeed, 
noting that, from the  lower gradient estimate (Corollary~\ref{lower bound})
and the increase principle~\cite[Lemma 2.3]{BL}, for small $\ep>0$, 
$
\{-\ep\le u(\cdot,t)\le0\}
\subset
\frac{\ol{\eta}}{\ol{\eta}-\ep}
\ol{\Om}_{t}
\setminus\Om_{t}, 
$
we have, 
for any $t\in[0,\ol{t}\land T]$,   
\begin{align*}
\frac{1}{\ep}\int_{-\ep}^{0}
\cH^{N-1}(\{u(\cdot,t)=r\})\,dr
&=\, 
\frac{1}{\ep}\int_{\{-\ep<u(\cdot,t)<0\}}|Du(x,t)|\,dx\\
{}&\le\, 
\frac{\|Du_{0}\|_{\infty}e^{Kt}}{\ep}
\cL^{N}(\{-\ep<u(\cdot,t)<0\})\\
{}&\le\, 
\frac{\|Du_{0}\|_{\infty}e^{Kt}}{\ep}
\cL^{N}(\frac{\ol{\eta}}{\ol{\eta}-\ep}
\Om_{t}\setminus\Om_{t})\\
{}&=\, 
\frac{\|Du_{0}\|_{\infty}e^{Kt}}{\ep}
\bigl(\bigl(
\frac{\ol{\eta}}{\ol{\eta}-\ep}
\bigr)^{N}-1\bigr)
\cL^{N}(\Om_{t}) 
\end{align*}
in view of the co-area formula 
and Proposition  \ref{regularity}.
Since the $\{u(\cdot,t)=r\}$'s are locally Lipschitz continuous,
the $(N-1)$-Hausdorff measure and the perimeter in Geometric
Measure Theory coincide (see \cite{EG}). Since this latter
perimeter is lower-semicontinuous with respect to the Hausdorff
convergence, sending $\ep\to0$, we get 
\[
\cH^{N-1}(\{u(\cdot,t)=0\})
\le 
\frac{N\|Du_{0}\|_{\infty}e^{Kt}}{\ol{\eta}}\cL^{N}(\Om_{t}) 
\]
for all $t\in[0,T]$. 
\end{rem}

%%%%%%%%%%%%%%%%%%%%%%%%%%%%%%%%%%%%%%%%%%%%%%%%%%%%%%%%%%
%%%%%%%%%%%%%%%%%%%%%%%%%%%%%%%%%%%%%%%%%%%%%%%%%%%%%%%%%%

\section{Appendix}

We give the proof of 
Lemma~\ref{I2geom},
Proposition~\ref{regularity} and
Lemma \ref{psi}.

%%%%%%%%%%%%%%%%%%%%%%%%%%%%%%%%%%%%%%%%%%%%%%%%%%%%%%%%%%%%%%

\begin{proof}[Proof of Lemma {\rm \ref{I2geom}}]
Let $\Omega_0$ be defined by~\eqref{strict-etoile-boule}
and consider the function
\begin{equation*}
u_0(x)=\left\{
\begin{array}{ll}
{\rm dist}(x,\Gamma_0) \land 1  & x\in\ol{\Omega}_0,\\
(-{\rm dist}(x,\Gamma_0)) \vee (-1) &x\in\R^N\setminus\Omega_0.
\end{array}
\right.
\end{equation*}
It is not difficult to see that~\eqref{u0representant}, 
(I1) and (I2) hold with $\nu(x)=-x.$
Conversely, suppose that (I1) and (I2) with $\nu(x)=-x$ 
hold for some $u_0.$
We claim that $\Omega_0=\{u_0>0\}$ is star-shaped
with respect to the ball $B(0,r_0)$ with
$$
r_0<\frac{\eta_0}{\|Du_0\|_\infty}.
$$
Let $y\in \ol{B}(0,r_0)$ and $x\in \partial\Omega_0$
and define $g(\lam)=u_0((1-\lam)x+\lam y).$
It suffices to show that $g>0$ on $(0,1].$
From (I1), (I2), we have
\begin{eqnarray*}
g(\lam)=u_0((1-\lam)x+\lam y)\geq u_0((1-\lam)x)
-\lam\|Du_0\|_\infty r_0 
\geq \eta_0\lam- \lam\|Du_0\|_\infty r_0 >0
\end{eqnarray*}
for $\lam\in (0,\lam_0].$
Let 
$$
\lam^*= {\rm max}\{\lam \in [\lam_0,1] : g>0
\ {\rm on} \ [0,\lam]\}.
$$
If $\lam^*=1,$ then the proof is complete. Otherwise, let
$\ep>0$ small enough such that $\ep/(1-\lam^*+\ep)<\lam_0.$
From (I1), (I2), we have
\begin{eqnarray*}
0=g(\lam^*)&=& u_0((1-\lam^*)x+\lam^*y)\\
&=& u_0 ( (1-\frac{\ep}{1-\lam^*+\ep})((1-\lam^*+\ep)x+(\lam^*-\ep)y)
+ \frac{\ep}{1-\lam^*+\ep}y)\\
&\geq& g(\lam^*-\ep)+
(\eta_0-\|Du_0\|_\infty r_0)\frac{\ep}{1-\lam^*+\ep} >0,
\end{eqnarray*}
which is a contradiction. It completes the proof of the claim.
The proof of the fact that a star-shaped with respect to a ball
domain has a locally Lipschitz continuous boundary may be found
in \cite[Prop. 2.4.4 and Theorem 2.4.7]{hp05} or \cite[Lemma p.20]{mp97}.

We turn to the proof of (ii).
We need to recall some notations and definitions and we refer
the reader to \cite{clsw98} for further details.
The Clarke generalized directional derivative at $x$ in the direction
$h$ of $f:\R^N\to \R$
is
$$
f^\circ (x,h)= \mathop{\rm lim\,sup}_{y\to x, \lambda\downarrow 0}
\frac{f(y+\lambda h)-f(y)}{\lambda}.
$$
The Clarke generalized derivative at $x$ is the closed convex set
$$
\partial f(x)= \{p\in\R^N \; : \;
{\rm for \ all \ } h\in\R^N, \,
\mathop{\rm lim\,sup}_{y\to x, \lambda\downarrow 0}
\frac{f(y+\lambda h)-f(y)-\langle p,h\rangle}{\lambda}\geq 0  \},
$$
which is nonempty when $f$ is locally Lipschitz continuous at $x.$
The Clarke tangent cone to $\Omega$ at $x\in\Gamma$ is the convex cone
$$
T_\Omega (x)=\{h\in\R^N : d_\Omega^\circ (x,h)=0\} 
$$
and the Clarke normal cone is the polar of the latter, i.e.,
$$
N_\Omega (x)= \{ \xi\in\R^N : \langle \xi ,h\rangle \leq 0 \
{\rm for \ all \ } h\in T_\Omega (x)\}.
$$
The proof is divided in several steps.

\noindent
{\it 1. We claim that, for any $x\in\Gamma$ and $\xi\in N_\Omega (x)\cap
\bS^{N-1},$
there exists $\nu_x\in \bS^{N-1}$ such that} 
\begin{eqnarray}\label{ineg-sur-cone}
1\geq \langle \xi,\nu_x \rangle \geq  \frac{1}{\sqrt{K^2+1}}.
\end{eqnarray}
We fix $x\in \Gamma,$ $x=(x',x_N)\in \R^{N-1}\times\R$ and consider
neighborhoods $V'=B(x',r)$ of $x'$ and $V_N=B(x_N,r)$ of $x_N$
such that there exists a $K$-Lipschitz
continuous function $f:V'\to \R$ 
with ${\rm graph}(f)=\Gamma\cap (V'\times V_N)$
and ${\rm epi}\, f \cap (V'\times V_N)= \Omega\cap (V'\times V_N).$
Let 
$\xi =(\xi',\xi_N) \in N_\Omega (x)\cap \bS^{N-1}
= N_{{\rm epi}\, f} (x',f(x'))\cap \bS^{N-1}.$
By definition, for all $v=(v',v_N)\in T_{{\rm epi}\, f} (x',f(x')),$
\begin{eqnarray}\label{bla111}
\langle (v',v_N),(\xi',\xi_N)\rangle
=  \langle v',v_N\rangle + v_N \xi_N \leq 0.
\end{eqnarray}
By \cite[Theorem 2.5.7]{clsw98}, since $f$ is Lipschitz continuous,
$T_{{\rm epi}\, f} (x',f(x'))=
{\rm epi}\, f^\circ (x;\cdot).$ It follows that, for
any $r\geq v_N,$ $(v',r)$ still belongs to $T_{{\rm epi}\, f} (x',f(x')).$
From \eqref{bla111}, we get
\begin{eqnarray}\label{bla2}
 \langle v',v_N\rangle + r \xi_N \leq 0 \quad 
{\rm for \ all \ } r\geq  v_N.
\end{eqnarray}
A first consequence is that $\xi_N\leq 0$ necessarily. Moreover
$$
\xi_N< 0.
$$ 
Indeed, if $\xi_N=0$ then \eqref{bla2}
holds for all $r\in\R$ and therefore 
$(v',r)\in T_{{\rm epi}\, f} (x',f(x'))=
{\rm epi}\, f^\circ (x;\cdot)$ for all $r\in\R.$
Therefore, using that $f$ is $K$-Lipschitz continuous, we get
$$
r\geq f^\circ (x;v')\geq -K|v'|
$$
which leads to a contradiction for $r\to -\infty.$
Using again \cite[Theorem 2.5.7]{clsw98}, we have
$$
(\xi',\xi_N)= -\xi_N (-\frac{\xi'}{\xi_N},-1)\in N_{{\rm epi}\, f} (x',f(x'))
\quad \Rightarrow \quad -\frac{\xi'}{\xi_N}\in \partial f(x).
$$
Since $f$ is $K$-Lipschitz continuous, it follows
$$
\frac{|\xi'|}{|\xi_N|}\leq K
$$
Using that $\xi\in \bS^{N-1},$ we obtain
$$
|\xi_N|\geq \frac{1}{\sqrt{1+K^2}}.
$$
Taking $\nu_x=(0,-1)$ we obtain easily \eqref{ineg-sur-cone}.
\smallskip

We denote by $d_\Omega$ the distance to $\Omega$ and by
$d_\Gamma^s$ the signed distance to $\Gamma$ which is negative
in $\Omega.$
For any set $A\subset\R^N,$ $\overline{\rm co}\, A$ is the
closure of the convex hull of $A.$
\smallskip

\noindent
{\it 2. We claim  $\partial d_\Gamma^s(x)
\subset \overline{\rm co}[N_\Omega (x)\cap \bS^{N-1}]$ for all $x\in\Gamma$.}
(Notice it means that the generalized derivative of the signed distance
does not contain 0.)
Let $x\in \Gamma$ and $x_i$ a sequence of points which converges to
$\Gamma$ such that $d_\Gamma^s$ is differentiable at $x_i.$
Assume that $p_i:= \nabla d_\Gamma^s(x_i)$ converges to $\overline{p}.$
Suppose first that, up to extract a subsequence, 
$x_i\not\in \overline{\Omega}.$
Then, since $d_\Gamma^s=d_\Omega$ in $\R^N\setminus \overline{\Omega},$
and $d_\Omega$ is differentiable at $x_i,$ it means that
$x_i$ has a unique closest point $\overline{x}_i\in \Gamma$ and
$$
 p_i=\nabla d_\Omega(x_i)=\frac{x_i-\overline{x}_i}{|x_i-\overline{x}_i|},
\quad |p_i|=1.
$$
But $N_\Omega (x)$ is the convex hull of the
cone generated by such limits (\cite[Exercise 8.5, p.96]{clsw98}).
Thus $\overline{p}\in N_\Omega (x)\cap \bS^{N-1}.$  
From  \cite[Theorem 2.8.1]{clsw98},
$\partial d_\Gamma^s(x)$ is the convex hull of such $\overline{p}$
which completes the proof of the claim.
\smallskip

\noindent
{\it 3. There exists an open bounded neighborhood $\mathcal{O}$
of $\Gamma$ and $\bar\eta >0$ such that,
for all $y \in \mathcal{O},$
$$
\mathcal{V}(y)=\{ \nu\in\overline{B(0,1)} : 
1\geq \langle p, \nu \rangle \geq \bar\eta 
\ {for \ all } \ p\in \partial d_\Gamma^s(y) \}
$$
is a nonempty compact convex subset of $\R^N.$}
The subsets $\mathcal{V}(y)$ are clearly convex, closed and bounded
for any $1> \bar\eta >0$ and open subset $\mathcal{O}.$ 
It remains to prove that there
are nonempty for some  $\bar\eta.$ It is true
for $\mathcal{V}(x)$ with $\bar\eta=\sqrt{1+K^2}^{-1}$ by Claims 1 and 2.
To extend this property in a neighborhood $B(x,r)$ of $x,$ we notice the
following facts: since $\partial d_\Gamma^s(\cdot)$ is upper-semicontinuous
(\cite[Proposition 2.1.5]{clsw98}) and $0\not\in \partial d_\Gamma^s(x),$
there exists $r>0$ such that $0\not\in \partial d_\Gamma^s(y)$ for all
$y\in B(x,r).$ By Clarke's implicit function theorem  
\cite[Proposition 3.3.6]{clsw98}, the $d_\Gamma^s(y)$-level sets of
$d_\Gamma^s$ are Lipschitz continuous in $B(x,r).$ The Lipschitz
constant is controlled by the distance from 0 to $\partial d_\Gamma^s(y).$
Up to take $r$ small, it depends only on $K.$
Then, we can repeat the previous arguments and obtain the result
in $B(x,r)$
(up to take $0<\bar\eta$ smaller than $1/\sqrt{1+K^2}$).
We then find $\mathcal{O}$ by compactness of $\Gamma.$
\smallskip

\noindent
{\it 4. The multi-valued map $\mathcal{V}:  \mathcal{O}\rightrightarrows\R^N$
is lower-semicontinuous.} Let $y\in \mathcal{O}.$
Let $\nu\in \mathcal{V}(y)$ and
$\epsilon>0.$ Since $\partial d_\Gamma^s(\cdot)$ is upper-semicontinuous,
by definition, there exists $\delta>0$ such that, if $|y-y'|<\delta$ then
$\partial d_\Gamma^s(y')\subset \partial d_\Gamma^s(y)+\epsilon B$
(where $B=\overline{B(0,1)}$).
Any $p'\in \partial d_\Gamma^s(y')$ can be written $p'=p+\epsilon w$
where $p\in \partial d_\Gamma^s(y)$ and $|w|\leq 1.$ 
Using that $\nu\in \mathcal{V}(y),$
it follows
$$
1+\epsilon \geq \langle \nu, p'\rangle =\langle \nu, p\rangle
+\langle \nu, w\rangle\geq \bar\eta -\epsilon
$$
This proves that $\mathcal{V}(y)\subset \mathcal{V}(y')+\epsilon B$
which is the definition of the lower-semicontinuity of a multi-valued
function.

\noindent
{\it 5. Michael's continuous selection theorem and end of the proof.}
From steps 3 and 4, we can apply Michael's continuous selection theorem
(\cite{af90}): there exists a continuous map 
$\Phi : \mathcal{O}\to \R^N$
such that $\Phi(y)\in \mathcal{V}(y),$ i.e.,
for all $y\in \mathcal{O}$ and $p\in \partial d_\Gamma^s(y),$ we have
\begin{equation}\label{ineg888}
d_\Gamma^s(y+\lam \Phi(y))\geq d_\Gamma^s(y)+\langle p, \lam \Phi(y)\rangle
+o_y(\lam)\geq d_\Gamma^s(y)+\eta\lam+o_y(\lam).
\end{equation}
According to Remark~\ref{rmk-C1-lisse-geom},
up to decrease $\eta,$ we may choose $\Phi$ which is smooth, bounded and
Lipschitz continuous. Using the Lipschitz continuity of $d_\Gamma^s$ and $\Phi,$
we may earn some uniformity in \eqref{ineg888} up to reduce $\eta.$
More precisely, for every $x\in\mathcal{O},$ there exists $r_x$ and $\lam_x$
such that
\begin{equation*}
d_\Gamma^s(y+\lam \Phi(y))\geq d_\Gamma^s(y)
+\frac{\bar\eta}{2}\lam,
\end{equation*}
for all $y\in B(x,r_x)\subset \mathcal{O}$ and $\lam\in [0,\lam_x].$
We conclude by compactness of $\Gamma$ (Note that we can modify
the signed distance function far from $\Gamma$ in order to have
a bounded function).
\end{proof}

%%%%%%%%%%%%%%%%%%%%%%%%%%%%  
% figure: ensemble cone regulier
\begin{figure}[ht]  
\begin{center}  
\epsfig{file=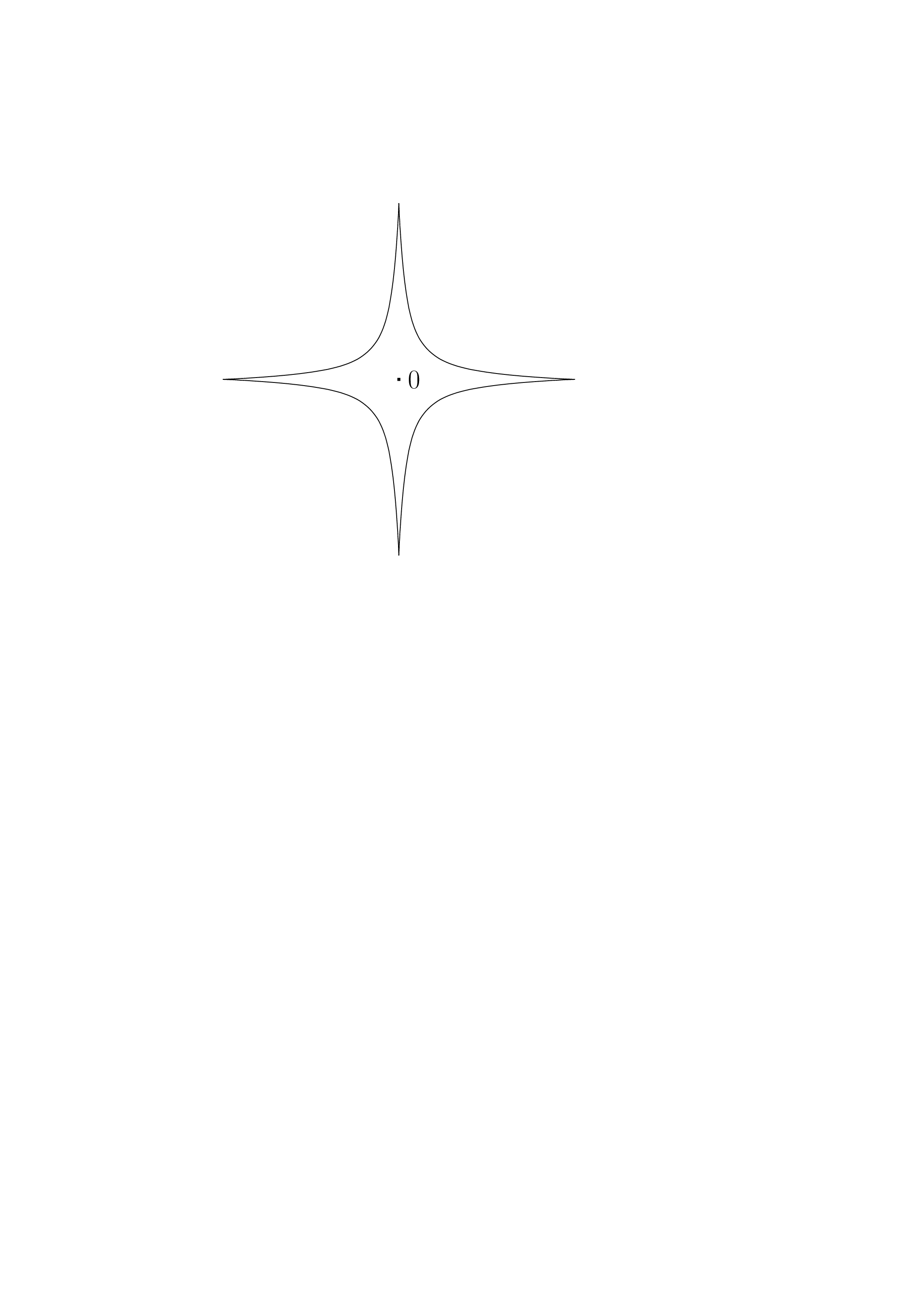, width=5cm}
\hspace*{2cm}  
\epsfig{file=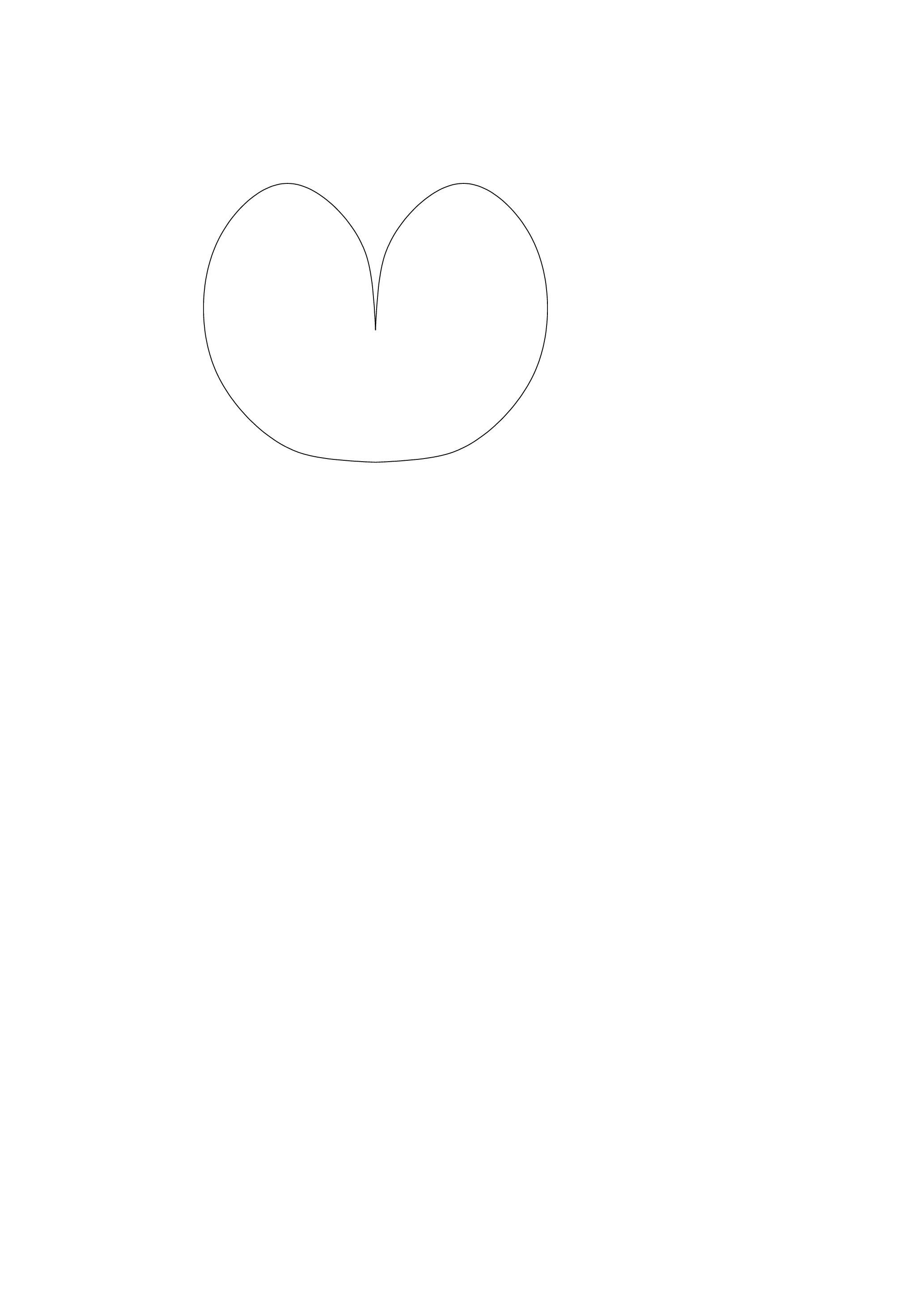, width=4cm}  
\end{center}  
\caption{ \label{dessins-ens}  
{\it A set which is star-shaped with respect to 0 but does not
satisfy (I2) and a set whose boundary is not locally Lipschitz
continuous and which satisfies (I2).
}}  
\end{figure}

%%%%%%%%%%%%%%%%%%%%%%%%%%%%%%%%%%%%%%%%%%%%%%%%%% 

\begin{rem}
Star-shaped property
is not sufficient to ensure (I2), see the counterexample of Figure~\ref{dessins-ens}.  There exist some sets which satisfy (I2) but they
do not have a locally Lipschitz boundary, see Figure~\ref{dessins-ens}.
\end{rem}

%%%%%%%%%%%%%%%%%%%%%%%%%%%%%%%%%%%%%%%%%%%%%%%%%%%%%%%%%%%

\begin{proof}[Proof of Proposition {\rm \ref{regularity}}]
Existence of a solution to \eqref{e-2}
is given by Assumption (A8). 
We prove the uniqueness and the Lipschitz continuity regularity in $x$
of the solutions. Let $u\in C(\cQ)$ be 
a solution of \eqref{e-2} which satisfy \eqref{condmoins1}.
Let $K,\ep,\eta>0$ and set 
\[
M:=\sup_{x,y\in\R^{N},t\in[0,T]}
\{u(x,t)-u(y,t)-e^{Kt}\frac{|x-y|^{4}}{\ep^{4}}-\eta t\}. 
\]
Let $M$ be attained at $(\hat{x},\hat{y},\hat{t})\in\R^{2N}\times[0,T]$. 
By (A8), we may assume that 
$(\ol{x},\ol{y},\ol{t})\in \ol{B}(0,R)^{2}\times[0,T]$. 

We first consider the case where $\ol{t}\in(0,T]$. 
In view of Ishii's lemma, for any $\rho>0$, 
there exist 
$(a,p,X)\in \ol{J}^{2,+}u(\ol{x},\ol{t})$ and 
$(b,p,Y)\in \ol{J}^{2,-}u(\ol{y},\ol{t})$ 
such that 
\begin{gather}
a-b=\frac{Ke^{K\ol{t}}|\ol{x}-\ol{y}|^{4}}{\ep^{4}}+\eta, 
\quad 
p=\frac{4e^{K\ol{t}}|\ol{x}-\ol{y}|^2}{\ep^{4}}(\ol{x}-\ol{y}), \nonumber\\ 
\left(
\begin{array}{cc}
X & 0 \\
0 & -Y 
\end{array}
\right)
\le 
A+\rho A^{2}, \label{ishii-lem1}
\end{gather}
where 
\[
A:=
\frac{4e^{K\ol{t}}}{\ep^{4}}|\ol{x}-\ol{y}|^2 
\left(
\begin{array}{cc}
I & -I \\
-I & I 
\end{array}
\right)
+\frac{8e^{K\ol{t}}}{\ep^{4}}
\left(
\begin{array}{cc}
(\ol{x}-\ol{y})\otimes(\ol{x}-\ol{y}) & 
-(\ol{x}-\ol{y})\otimes(\ol{x}-\ol{y}) \\
-(\ol{x}-\ol{y})\otimes(\ol{x}-\ol{y}) & 
(\ol{x}-\ol{y})\otimes(\ol{x}-\ol{y})
\end{array}
\right). 
\]
The definition of viscosity solutions immediately implies the
following inequalities: 
\[
a+H_{\ast}(\ol{x},\ol{t},p,X)\le 0, 
\quad
b+H^{\ast}(\ol{y},\ol{t},p,Y)\ge 0. 
\]
We have 
\begin{equation}\label{ineq-11111}
\frac{Ke^{K\ol{t}}|\ol{x}-\ol{y}|^{4}}{\ep^{4}}+\eta
+H_{\ast}(\ol{x},\ol{t},p,X)-H^{\ast}(\ol{y},\ol{t},p,Y)\le0. 
\end{equation}

We shall distinguish two cases: 
(i) 
for any $\ep\in(0,1)$, 
$p\not=0$; 
(ii) 
there exist 
$\{\ep_j\}_{j\in\N}$ 
such that 
$\ep_j\to0$ as $j\to\infty$, 
$p=0$ 
for any $j\in\N$.

We first consider case (i). 
By (A7) with $\lam=0$, we have 
\begin{align*}
\frac{Ke^{K\ol{t}}|\ol{x}-\ol{y}|^{4}}{\ep^{4}}+\eta
\le \ & 
H(\ol{y},\ol{t},p,Y)-H(\ol{x},\ol{t},p,X)\\
{} = \ & 
H(\ol{y},\ol{t},\frac{4e^{K\ol{t}}|x-y|^{2}}{\ep^{4}}(x-y),Y)
-H(\ol{x},\ol{t},\frac{4e^{K\ol{t}}|x-y|^{2}}{\ep^{4}}(x-y),X)\\
{} \le \ & 
C_{H}\Bigl(\frac{e^{K\ol{t}}|\ol{x}-\ol{y}|^{4}}{\ep^{4}}
+\rho\|A\|^2\Bigr). 
\end{align*}

In case (ii), 
we have $\ol{x}=\ol{y}$. 
Therefore we have 
$A=0$, $X\le0$, $Y\ge0$ and 
$\eta 
\le 
H_{\ast}(\ol{y},\ol{t},0,Y)-H^{\ast}(\ol{x},\ol{t},0,X)
\le 
H_{\ast}(\ol{y},\ol{t},0,0)-H^{\ast}(\ol{x},\ol{t},0,0)
=0$, 
which is a contradiction.

Therefore, 
sending $\rho\to0$ and 
setting 
$K=C_{H}$, 
necessarily we have 
$\ol{t}=0$. 
We get for all $x,y\in\R^N$, $t\in[0,T]$ 
\begin{align*}
{}&
u(x,t)-u(y,t)-\eta t\\
\le \ &
u_{0}(\ol{x})-u_{0}(\ol{y})-\frac{|\ol{x}-\ol{y}|^{4}}{\ep^{4}}
+\frac{e^{Kt}|x-y|^{4}}{\ep^{4}}\\
\le \ &
\|Du_{0}\|_{\infty}|\ol{x}-\ol{y}|
-\frac{|\ol{x}-\ol{y}|^{4}}{\ep^{4}}
+\frac{e^{Kt}|x-y|^{4}}{\ep^{4}}\\
\le \ &
\frac{3}{4^{4/3}}
\|Du_{0}\|_{\infty}^{4/3}\ep^{4/3}
+\frac{e^{Kt}|x-y|^{4}}{\ep^{4}}. 
\end{align*}
We have used the Young inequality in the third inequality. 
Setting 
\[
\ep^{4}
=\Bigl(\frac{ 4^{4/3}
e^{Kt}|x-y|^{4}}{C\|Du_{0}\|_{\infty}^{3/4}}\Bigl)^{3/4}, 
\]
we get 
\[
u(x,t)-u(y,t)-\eta t\le \|Du_{0}\|_{\infty}e^{Kt/4}|x-y|. 
\]
Sending $\eta\to0$, we have 
\[
u(x,t)-u(y,t) \le \|Du_{0}\|_{\infty}e^{Kt/4}|x-y|. 
\]
On the one hand,
by taking $x=y$ in the above inequality, we get $u\leq v$ and obtain
the uniqueness of the solution. On the other hand, by choosing $u=v,$ 
we obtain~\eqref{reg-lip-u-x}.

We now prove \eqref{reg-hold-u-t}.
Set $ R:=2R_{T}\vee\|Du_{0}\|_{\Li(\R)}e^{KT}$, 
where $K$ is the constant given by 
%Proposition \ref{regularity1}. 
Proposition  \ref{regularity}.
Recalling \eqref{ancien(A6)},
in view of Lemma 9.1 in~\cite{BBL}, there exists 
a constant $\tilde{L}>0$ 
such that 
\[
|u(x,t)-u(x,s)|\le\tilde{L}|t-s|^{1/2}
\]
for all $x\in B(0,R_{T})$, 
$s\in[t,T]$. 
Noting that $u(x,t)\equiv-1$ on $(\R^N\setminus B(0,R_{T}))
\times[0,T]$, 
we get 
\[
|u(x,t)-u(x,s)|\le\tilde{L}|t-s|^{1/2}
\]
for all $x\in \R^{N}$, $t,s\in[0,T]$. 
\end{proof}
%%%%%%%%%%%%%%%%%%%%%%%%

%%%%%%%%%
\begin{lem}\label{psi}
Assume {\rm (I2)}. 
Define the nondecreasing function $\Psi\in C(\R)$ by 
\begin{eqnarray*}
\Psi(r):=
\left\{
\begin{array}{lcl}
-1 & \textrm{if} & r\le -\frac{3\del_{0}}{4}, \\
\frac{2(2-\del_{0})}{\del_{0}}(r+\frac{\del_{0}}{2})-\frac{\del_{0}}{2} 
& \textrm{if} & -\frac{3\del_{0}}{4}\le r\le -\frac{\del_{0}}{2}, \\
r & \textrm{if} 
& -\frac{\del_{0}}{2}\le r\le\frac{\del_{0}}{2}, \\
\frac{\del_{0}}{2} & \textrm{if} 
& \frac{\del_{0}}{2}\le r. 
\end{array}
\right. 
\end{eqnarray*}
There exists a constant $\ol{\lam}\in(0,\lam_{0}]$ 
which depends only on 
$\del_{0}, \eta_0,$ $\|Du_{0}\|_{\infty}$ and $\|\nu\|_{\infty}$
and satisfies 
\begin{equation}\label{star-ineq-1}
u_{0}(\psi_{\lam}(x))\ge \Psi(u_{0}(x)+\lam\eta_{0}) 
\quad \textrm{for any} \ x\in\R^{N}, \lam\in[0,\ol{\lam}]. 
\end{equation}
\end{lem}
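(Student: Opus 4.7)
The plan is to verify the inequality \eqref{star-ineq-1} pointwise by splitting $\R^{N}$ into three regions according to the value of $u_{0}(x)$, and, for a suitable choice of $\ol{\lam}$, to check each case separately. The key intuition behind the definition of $\Psi$ is that it dominates the identity only on $(-\infty,-3\del_{0}/4]$, where it equals $-1$; on the complementary set $[-3\del_{0}/4,\infty)$ it is bounded above by the identity. Hence (I2) is only really used in the ``interesting'' region $\{|u_{0}|\le \del_{0}\}$, while outside of it one only needs the Lipschitz continuity of $u_{0}$ and the global bound $|u_{0}|\le 1$ from (I1).

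The first step is the elementary inequality $\Psi(r)\le r$ for all $r\ge -1$: indeed this is immediate on $[-1,-3\del_{0}/4]$ (where $\Psi(r)=-1\le r$), on $[-\del_{0}/2,\del_{0}/2]$ (where $\Psi(r)=r$) and on $[\del_{0}/2,\infty)$ (where $\Psi(r)=\del_{0}/2\le r$); on the linear piece $[-3\del_{0}/4,-\del_{0}/2]$ it follows from the fact that $\Psi$ agrees with the identity at $r=-\del_{0}/2$ while $\Psi(-3\del_{0}/4)=-1<-3\del_{0}/4$ (using $\del_{0}<1$) together with linearity.

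The second step is to set
\[
\ol{\lam}:=\min\Bigl\{\lam_{0},\ \frac{\del_{0}}{4\eta_{0}},\ \frac{\del_{0}}{2\|Du_{0}\|_{\infty}\|\nu\|_{\infty}}\Bigr\}
\]
(which only depends on the quantities allowed by the statement) and to distinguish three cases for $\lam\in[0,\ol{\lam}]$. If $|u_{0}(x)|\le \del_{0}$, then $x\in U_{0}$ and (I2) yields $u_{0}(\psi_{\lam}(x))\ge u_{0}(x)+\lam\eta_{0}$; since $u_{0}(x)+\lam\eta_{0}\ge -1$, the first step gives $\Psi(u_{0}(x)+\lam\eta_{0})\le u_{0}(x)+\lam\eta_{0}\le u_{0}(\psi_{\lam}(x))$. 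If $u_{0}(x)>\del_{0}$, then $u_{0}(x)+\lam\eta_{0}>\del_{0}/2$ so $\Psi(u_{0}(x)+\lam\eta_{0})=\del_{0}/2$, and the Lipschitz bound gives $u_{0}(\psi_{\lam}(x))\ge u_{0}(x)-\lam\|Du_{0}\|_{\infty}\|\nu\|_{\infty}>\del_{0}-\del_{0}/2=\del_{0}/2$. Finally, if $u_{0}(x)<-\del_{0}$, the constraint $\lam\eta_{0}\le\del_{0}/4$ forces $u_{0}(x)+\lam\eta_{0}<-3\del_{0}/4$, so $\Psi(u_{0}(x)+\lam\eta_{0})=-1\le u_{0}(\psi_{\lam}(x))$ by (I1).

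There is no serious obstacle here; the only delicate point is the bookkeeping of the constants in the definition of $\ol{\lam}$, which must depend exclusively on $\del_{0}$, $\eta_{0}$, $\|Du_{0}\|_{\infty}$, $\|\nu\|_{\infty}$, and this is precisely what the explicit formula above ensures.
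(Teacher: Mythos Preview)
Your proof is correct and follows essentially the same approach as the paper's: both split into the three cases $|u_0(x)|\le\del_0$, $u_0(x)>\del_0$, and $u_0(x)<-\del_0$, handling them respectively by (I2), the Lipschitz bound, and the constraint $\lam\eta_0\le\del_0/4$. Your version is slightly more explicit in that you isolate the auxiliary inequality $\Psi(r)\le r$ for $r\ge -1$ and give a closed formula for $\ol{\lam}$, whereas the paper leaves these implicit; otherwise the arguments coincide.
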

%%%%%%%%

\begin{proof}
It is easy to see inequality \eqref{star-ineq-1} if 
$|u_{0}(x)|\le\del_{0}$ in view of (I2). 
In the case where 
$u_{0}(x)>\del_{0}$, 
we have 
\[
u_{0}(x+\lam\nu(x))
\ge 
u_{0}(x)-\lam\|Du_{0}\|_{\infty}|\nu(x)|
\ge 
\del_{0}-\lam\|Du_{0}\|_{\infty}
\|\nu\|_{\infty}, 
\]
which implies 
inequality \eqref{star-ineq-1} if
$\lam\in[0,\ol{\lam}]$ with 
$\ol{\lam}\|Du_{0}\|_{\infty}\|\nu\|_{\infty}\leq \del_{0}/2.$ 
Finally, we consider the case where 
$u_{0}(x)<-\del_{0}$. 
By replacing $\ol{\lam}$ by a smaller constant 
if necessary, 
we may assume that 
$\lam\eta_{0}\le\del_{0}/4$ 
for all $\lam\in[0,\ol{\lam}]$. 
Then we have $\Psi(u_{0}(x)+\lam\eta_{0})=-1$, 
which yields a conclusion. 
\end{proof}

%%%%%%%%%%%%%%%%%%%%%%%%%%%%%%%%%%%%%%%%%%%%%%%%%%%%%%%%%%%%%%%%%%%%%%
%%%%%%%%%%%%%%%%%%%%%%%%%%%%%%%%%%%%%%%%%%%%%%%%%%%%%%%%%%%%%%%%%%%%%%

%References
\bibliographystyle{amsplain}
%\bibliography{modulardiffeq}
%\nocite{*}
\providecommand{\bysame}{\leavevmode\hbox to3em{\hrulefill}\thinspace}
\providecommand{\MR}{\relax\ifhmode\unskip\space\fi MR }
% \MRhref is called by the amsart/book/proc definition of \MR.
\providecommand{\MRhref}[2]{%
  \href{http://www.ams.org/mathscinet-getitem?mr=#1}{#2}
}
\providecommand{\href}[2]{#2}

\end{document}